\documentclass[reqno]{amsart}
\UseRawInputEncoding

\usepackage[T1]{fontenc}
\usepackage{lmodern}
\usepackage{mathtools}
\usepackage{amssymb}
\usepackage{microtype}
\usepackage{xcolor}
\usepackage[colorlinks=true,linkcolor=blue,citecolor=blue,urlcolor=blue]{hyperref}

\newif\ifshowrevisions
\showrevisionsfalse
\definecolor{revisionred}{RGB}{200,0,0}
\newcommand{\revisionformat}{%
  \ifshowrevisions
    \color{revisionred}%
    \hypersetup{linkcolor=revisionred,citecolor=revisionred,urlcolor=revisionred}%
  \fi
}
\newcommand{\rev}[1]{\begingroup\revisionformat#1\endgroup}

\newtheorem{theorem}{Theorem}[section]
\newtheorem{lemma}[theorem]{Lemma}
\newtheorem{proposition}[theorem]{Proposition}
\newtheorem{corollary}[theorem]{Corollary}

\newtheorem{question}{Question}
\theoremstyle{definition}
\newtheorem{definition}[theorem]{Definition}
\theoremstyle{remark}
\newtheorem{remark}[theorem]{Remark}

\newcommand{\M}{\mathcal M}
\newcommand{\N}{\mathcal N}
\newcommand{\Core}{\mathcal C}
\newcommand{\E}{\mathbb E}

\newcommand{\Trc}{\operatorname{Tr}}
\newcommand{\Term}{\operatorname{Term}}

\newcommand{\weakto}{\mathrel{\rightharpoonup}}
\newcommand{\one}{\mathbf 1}

\newcommand{\thmref}[1]{Theorem~\ref{#1}}
\newcommand{\lemref}[1]{Lemma~\ref{#1}}
\newcommand{\propref}[1]{Proposition~\ref{#1}}
\newcommand{\corref}[1]{Corollary~\ref{#1}}

\newcommand{\secref}[1]{Section~\ref{#1}}

\title[The \(2\)-co-lacunary subsequence principle]{The
\(2\)-co-lacunary subsequence principle for arbitrary
noncommutative \(L_1\)-spaces}

\author[Jinghao Huang]{J. Huang}
\thanks{Y. Zhu is the corresponding author. This work was supported by the
  NNSF of China (Nos. 12301160, 12471134 and 12671159).}

\author[Yunpeng Zhu]{Y. Zhu}
\address{Institute for Advanced Study in Mathematics of HIT, Harbin Institute
  of Technology, Harbin 150001, China}
\email{jinghao.huang@hit.edu.cn}
\email{24s012030@stu.hit.edu.cn}

\subjclass[2020]{Primary 46L53; Secondary 46B20, 46L52, 60G42}
\keywords{Noncommutative \(L_1\)-space, \(2\)-co-lacunary sequence,
Haagerup \(L_1\)-space, equiintegrability,
martingale difference sequence, Dunford--Pettis operator,
\(\ell_2\)-strictly singular operator}

\begin{document}

\begin{abstract}
For every von Neumann algebra \(\M\), we prove that each bounded sequence
in the Haagerup space \(L_1(\M)=\M_*\) admits either a norm-convergent
subsequence or a \(2\)-co-lacunary subsequence.  This answers an open
problem of Parcet and Randrianantoanina \cite[Remark~3.11]{ParcetRandrianantoanina2006} and removes both hyperfiniteness and
semifiniteness.
\end{abstract}

\maketitle

\section{Introduction}
A sequence \((x_n)\) in a Banach space \(X\) is called
\emph{\(2\)-co-lacunary} if there is \(\gamma>0\) such that
\[
  \left\|\sum_n a_nx_n\right\|_X
  \geq\gamma\left(\sum_n|a_n|^2\right)^{1/2}
\]
for every finitely supported scalar sequence \((a_n)\). 

Aldous and Fremlin \cite[Theorem~6]{AldousFremlin1982}
proved that every bounded sequence in a classical
\(L_1\)-space has either a norm-convergent subsequence
or a \(2\)-co-lacunary subsequence.

In the noncommutative setting, Parcet and Randrianantoanina
proved that every relatively weakly compact,
seminormalized martingale difference sequence in
a semifinite \(L_1\)-space is \(2\)-co-lacunary (see \cite[Theorem~3.6]{ParcetRandrianantoanina2006}).
They further obtained the following result: if \(\M\) is a hyperfinite semifinite von Neumann algebra,
then every bounded sequence in \(L_1(\M,\tau)\) admits
either a norm-convergent subsequence or a
\(2\)-co-lacunary subsequence (see \cite[Corollary~3.7]{ParcetRandrianantoanina2006}).
Subsequently, Sukochev and Zhou
\cite[Theorem~1.1]{SukochevZhou2020}
established a corresponding subsequence characterization
for noncommutative symmetric Banach spaces associated
with hyperfinite semifinite von Neumann algebras.

Moreover, in \cite[Remark~3.11]{ParcetRandrianantoanina2006},
Parcet and Randrianantoanina raised the following question:

\begin{question}\label{q:PR}
Does Corollary~3.7 remain valid without
the hyperfiniteness assumption?
\end{question}

We answer Question~\ref{q:PR} affirmatively and further
remove the semifiniteness assumption. Indeed, for an
arbitrary von Neumann algebra \(\M\), the Haagerup space
\(L_1(\M)\) is canonically and isometrically identified
with the predual \(\M_*\) \cite{Haagerup1979}.
Our main result is the following.

\begin{theorem}\label{thm:main}
Let \(\M\) be an arbitrary von Neumann algebra.  Every bounded sequence in
the Haagerup space \(L_1(\M)=\M_*\) has either a norm-convergent
subsequence or a \(2\)-co-lacunary subsequence.
\end{theorem}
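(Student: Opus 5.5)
Let \((x_n)\) be a bounded sequence in \(L_1(\M)=\M_*\) with no norm-convergent subsequence. The plan reduces to two cases according to whether some subsequence is relatively weakly compact. This is the standard dichotomy used by Aldous--Fremlin and by Parcet--Randrianantoanina.

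\emph{Case 1: no subsequence is relatively weakly compact.} By the noncommutative Kadec--Pełczyński / Akemann characterization of weak compactness in preduals, \((x_n)\) then fails uniform \(\sigma\)-additivity. After passing to a subsequence we find mutually orthogonal projections \(e_k\in\M\) and \(\varepsilon>0\) with \(|x_{n_k}(e_k)|\ge\varepsilon\). Rosenthal's \(\ell_1\) argument in preduals (Pfitzner's theorem that \(\M_*\) has property (V\(^*\))) then yields a subsequence equivalent to the unit vector basis of \(\ell_1\). An \(\ell_1\)-sequence is trivially \(2\)-co-lacunary, since \(\sum|a_n|\ge(\sum|a_n|^2)^{1/2}\).

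\emph{Case 2: \((x_n)\) is relatively weakly compact.} By Eberlein--Šmulian we pass to a weakly convergent subsequence \(x_n\weakto x\). Since there is no norm-convergent subsequence, \(\|x_n-x\|\ge\delta>0\) along a further subsequence. Set \(y_n=x_{2n+1}-x_{2n}\), or more simply work with \(y_n=x_n-x\): these are seminormalized and weakly null. The core step is to show that a seminormalized, weakly null, relatively weakly compact sequence in an arbitrary predual \(\M_*\) has a \(2\)-co-lacunary subsequence. Once that holds, we return to \((x_n)\). If \(x\neq0\), a subsequence of \((y_n)\) is \(2\)-co-lacunary, and a standard perturbation shows that some subsequence of \((x_n)\) is too. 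Concretely, take a functional \(\phi\) norming the finitely many relevant coordinates and use the estimate \(\|\sum a_nx_n\|\ge\max(\|\sum a_n y_n\|-|\sum a_n|\|x\|,\ \ldots)\), with the usual trick of either bounding below by \(|\sum a_n|\,\|x\|\) through a weak\(^*\)-limit functional, or passing to a block situation. This mirrors the commutative proof.

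For the core step, I would reduce to the \(\sigma\)-finite case first. The supports of countably many normal functionals lie under a single \(\sigma\)-finite projection \(p\), and \(p\M p\) is \(\sigma\)-finite with \(L_1(p\M p)\) isometrically embedded in \(L_1(\M)\). Next I would apply Haagerup's reduction theorem. It gives an increasing sequence of finite von Neumann algebras \(\M_k\subset\mathcal R=\M\rtimes_\sigma\mathbb Q_2\), with normal faithful conditional expectations \(\E_k\), whose union is weak\(^*\)-dense. There is also a conditional expectation from \(\mathcal R\) onto \(\M\), so \(L_1(\M)\) embeds isometrically and complementably into \(L_1(\mathcal R)\), and \(\E_k(y)\to y\) in \(L_1(\mathcal R)\) for every \(y\). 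Inside \(L_1(\mathcal R)\) the sequence \((y_n)\) remains relatively weakly compact and weakly null. By a gliding-hump argument on the filtration \((\E_k)\), I would pick \(n_1<n_2<\cdots\) and \(k_1<k_2<\cdots\) such that \(y_{n_j}\) is a small perturbation of a martingale difference \(d_j=(\E_{k_j}-\E_{k_{j-1}})y_{n_j}\). The relevant inputs are that \(\|y_{n_j}-\E_{k_j}y_{n_j}\|\) is small, and that \(\E_{k_{j-1}}y_{n_j}\) is small because each \(\E_k\) maps into the separable-in-norm-bounded-piece \(L_1(\M_k)\) and weak nullity plus finiteness gives norm smallness after a subsequence. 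This last point is subtle, and I would handle it by replacing \(y_n\) by the differences \(x_{n}-x_{n'}\) and using the fact that weakly null, uniformly integrable sequences in \(L_1\) of a finite algebra have \(\E_k\)-projections that tend to zero in norm. The resulting martingale difference sequence \((d_j)\) in the semifinite (indeed finite-built) space \(L_1(\mathcal R)\), taken with the trace of the core if needed, is seminormalized and relatively weakly compact. So \cite[Theorem~3.6]{ParcetRandrianantoanina2006} makes \((d_j)\) \(2\)-co-lacunary, and small perturbations preserve this lower estimate.

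The main obstacle is the martingale approximation step. Weak nullity does not directly give \(\|\E_k y_n\|\to0\), because \(L_1(\M_k)\) is infinite-dimensional. I would first try to exploit the fact that \(\M_k\) is finite with a faithful trace state, so that \(\E_k y_n\) lies in a relatively weakly compact set of \(L_1(\M_k)\). I would then iterate the dichotomy: if \((\E_k y_n)_n\) has no norm-Cauchy subsequence, I would work at level \(k\) instead and diagonalize. Verifying that the relevant Parcet--Randrianantoanina theorem applies in \(L_1(\mathcal R)\), which is type III in general, requires passing to Haagerup's semifinite crossed product, and this also needs care.
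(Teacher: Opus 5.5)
Your outer reduction is sound and is the paper's own. It uses Rosenthal's theorem with weak sequential completeness of \(\M_*\); Case~1 needs nothing more, because a weakly Cauchy subsequence is weakly convergent, hence relatively weakly compact. It then centers \(y_n=x_n-x\) and transfers back by \cite[Lemma~2(a)]{AldousFremlin1982}. The genuine gap is the core step, and it sits exactly where Question~\ref{q:PR} sits. Your gliding hump needs \(\|\E_ky_n\|_1\to0\) (along a subsequence) for each fixed \(k\). In the hyperfinite argument this is supplied by finite-dimensionality of the filtration: a weakly null sequence in a finite-dimensional space is norm null.

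Haagerup's reduction only gives finite, infinite-dimensional \(\mathcal R_k\). The fact you plan to use there is false: a weakly null, uniformly integrable sequence in \(L_1\) of a finite algebra need not have \(\E_k\)-projections tending to zero. If the \(y_n\) already lie in \(L_1(\mathcal R_k)\), say Rademacher functions, then \(\E_ky_n=y_n\). Worse, take \(\M\) already finite, e.g.\ \(L(\mathbb F_2)\) with its trace. Then the modular group is trivial, every \(\mathcal R_k\) contains \(\M\), and the reduction does nothing. Your fallback (``if \((\E_ky_n)_n\) is not norm null, work at level \(k\) and diagonalize'') asks for the theorem in \(L_1\) of a finite, possibly non-hyperfinite algebra, which is the statement you are proving. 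So the plan cannot get past the hyperfinite case. Even there you would still have to reconcile Haagerup's \(\hat\varphi\)-preserving expectations with the trace-preserving filtration that \cite[Theorem~3.6]{ParcetRandrianantoanina2006} requires.

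The missing idea is to manufacture the martingale rather than find it inside \(\M\). In \thmref{thm:finite-core} the paper argues by contradiction. If no subsequence is \(2\)-co-lacunary, then for each \(c>0\) the tree of finite index sets satisfying the lower \(\ell_2\)-estimate with constant \(c\) is well founded. The pruning \lemref{lem:reading} makes the coefficients of the first bad coefficient vectors depend, up to small error, only on the prefix. Mazur's lemma at each node gives convex weights, which turn the leaves of a finite subtree into a probability space \(\Omega\). The martingale differences then live in \(L_\infty(\Omega)\overline\otimes\M\), relative to the filtration generated by the tree levels, not any filtration of \(\M\). The weak-type \((1,1)\) martingale-transform inequality bounds \(\E_\varepsilon\|\sum_k\varepsilon_kd_k\|_p^p\) by \(O(c^p)\) plus small errors. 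On the other side, \(L_p\)-cotype \(2\) for \(0<p<1\) and \(\inf_n\|x_n\|_p>0\) (from uniform integrability) give a lower bound independent of \(c\). Hyperfiniteness never enters.

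The general case is reduced to this without Haagerup's reduction. The steps are a \(\sigma\)-finite corner, the Raynaud--Xu four-corner decomposition, orthogonal-support witnesses for the one-sided corners (\lemref{lem:one-sided-witness}), and compression of the biequiintegrable corner into the finite-trace corner \(P_N\Core P_N\) of the core (\propref{prop:core-big-window}).
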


We outline the proof of the main result.
Rosenthal's \(\ell_1\)-theorem \cite{Rosenthal1974},
the weak sequential completeness of \(\M_*\)
\cite[Chapter~III, Corollary~5.2]{Takesaki1979}, and
the translation stability of \(2\)-co-lacunarity
(\cite[Lemma~2(a)]{AldousFremlin1982}, i.e., \lemref{lem:translation}) reduces the proof of \thmref{thm:main} to the
seminormalized weakly null case
(\thmref{thm:haagerup-weak-null}).

For seminormalized weakly null sequences, we first
pass to a \(\sigma\)-finite corner
(\lemref{lem:sigma-finite-corner}). Then
the Raynaud--Xu four-corner decomposition
(\cite[Corollary~4.12]{RaynaudXu2003}, i.e., \lemref{lem:four-corners})
reduces the proof of \thmref{thm:haagerup-weak-null}
to the one-sided compression case
(\lemref{lem:one-sided-witness}) and the
biequiintegrable case (\corref{cor:biequi-weak-null}).

For the one-sided case, we use polar decompositions
to construct elements with orthogonal supports.
By controlling the off-diagonal pairings and applying
the \(L_1\)--\(L_\infty\) duality inequality, we obtain
the required lower \(\ell_2\)-estimate.
For the biequiintegrable case, We use the uniform two-sided approximation (\lemref{lem:uniform-symmetric-factorization})
and introduce the compression maps
\[
  \mathcal J_N:L_1(\M)\longrightarrow
  L_1(P_N\Core P_N,\Trc),
  \qquad
  \mathcal J_N(x)=\frac{1}{2N}P_NxP_N,
\]
where \(P_N=\one_{[e^{-N},e^N)}(D)\).
The spectral compression estimate
(\propref{prop:core-big-window}) allows us to choose
one fixed \(N\) such that the compressed sequence
remains seminormalized and weakly null.
Since \(\Trc(P_N)<\infty\), this reduces the proof
of \corref{cor:biequi-weak-null} to the finite-trace
case (\corref{cor:finite-weak-null}).

In the finite-trace setting, we first establish a
pruning lemma (\lemref{lem:reading}) and then use
Mazur's lemma to construct a finite martingale
difference sequence.
The noncommutative \(L_p\) cotype-\(2\) inequality
(\lemref{lem:cotype}) and the weak-type \((1,1)\)
martingale-transform inequality (\thmref{thm:MT})
yield contradictory estimates, completing the proof.

We conclude the paper with several applications of our main result. We first characterize relatively norm compact subsets of von Neumann algebra preduals in terms of \(2\)-co-lacunary sequences (\propref{prop:norm-compactness}) and obtain corresponding criteria for Dunford--Pettis operators (\propref{prop:DP-tests}). In particular, we prove that a bounded linear operator between the preduals of arbitrary von Neumann algebras is Dunford--Pettis if and only if it is \(\ell_2\)-strictly singular (\propref{prop:dp-l2ss-arbitrary}), extending the result of \cite{HuangPlievSukochev2022}. We also establish the subsequence property for Haagerup \(L_p\)-spaces, \(1\leq p\leq2\), associated with arbitrary von Neumann algebras (\corref{cor:Haagerup-Lp-dichotomy}). Finally, we extend the results of Sukochev and Zhou \cite[Corollary~3.4 and Theorem~1.1]{SukochevZhou2020} to arbitrary semifinite von Neumann algebras. More precisely, we show that every bounded sequence in \((L_1+L_2)(\M,\tau)\) admits either a norm-convergent subsequence or a \(2\)-co-lacunary subsequence (\corref{cor:L1-L2-dichotomy}), and extend their characterization of noncommutative symmetric Banach spaces without assuming hyperfiniteness or separability (\thmref{thm:semifinite-symmetric-dichotomy}).

\section{Preliminaries}\label{sec:prelim}

We recall the notation and background results which are  needed
in this paper. General background
on von Neumann algebras and Haagerup \(L_1\)-spaces can be found in
\cite{Takesaki1979,Haagerup1979,HaagerupJungeXu2010}.

\subsection{Measurable operators and
\texorpdfstring{\(L_p\)}{Lp}-quasi-norms}

In the tracial parts of the argument, \(\M\) denotes a von Neumann algebra
equipped with a normal faithful semifinite trace \(\tau\).  We use the standard space of \(\tau\)-measurable
operators affiliated with \(\M\), as in
\cite{Nelson1974,FackKosaki1986}.  For \(0<p<\infty\),
\[
  L_p(\M,\tau)
  =\{x:\tau(|x|^p)<\infty\},\qquad
  \|x\|_p=\tau(|x|^p)^{1/p}.
\]
\rev{We write \(L_p(\M)\) when the trace is understood.}
If \(\mu_t(x)\) denotes the generalized singular-value function, then,
by \cite[Corollary~2.8]{FackKosaki1986},
\[
  \|x\|_p^p=\int_0^\infty \mu_t(x)^p\,dt.
  \label{eq:singular-integral}
\]
Following \cite[p.~231]{ParcetRandrianantoanina2006}, the weak
\(L_1\) quasi-norm is
\[
  \|x\|_{1,\infty}=\sup_{t>0}t\mu_t(x).
  \label{eq:weak-L1}
\]
We shall use some standard facts about the quasi-Banach range \(0<p<1\).
By
\cite[Theorem~4.9(iii)]{FackKosaki1986}, we have that
\begin{align}
  \|x+y\|_p^p\leq \|x\|_p^p+\|y\|_p^p.
  \label{eq:p-triangle}
\end{align}
Moreover, when \(\tau(\one)=1\),
\begin{align}
  \|x\|_p\leq \|x\|_1,
  \label{eq:L1-Lp}
\end{align}
and it follows from \cite[Corollary~2.8]{FackKosaki1986} that
\begin{align}
  \|x\|_p^p
  \leq \frac{1}{1-p}\|x\|_{1,\infty}^p,
  \qquad 0<p<1.
  \label{eq:weak-embed}
\end{align}


The following Rademacher estimate combines the noncommutative
\(L_p\)-cotype inequality \cite[Corollary~5.8]{PisierXu2003} with the
quasi-Banach Kahane--Khintchine inequality
\cite[Proposition~2.1]{Kalton2005}. 

\begin{lemma}\label{lem:cotype}
\rev{Let \(\M\) be a von Neumann algebra equipped with a normal faithful
semifinite trace \(\tau\).}
For
every \(0<p<1\) there is
\(K_p<\infty\) such that every finite family
\((z_k)\subset L_p(\M,\tau)\) satisfies
\begin{align}
  \left(\sum_k\|z_k\|_p^2\right)^{1/2}
  \leq K_p
  \left(
    \E_\varepsilon
    \left\|\sum_k\varepsilon_kz_k\right\|_p^p
  \right)^{1/p},
  \label{eq:cotype}
\end{align}
where the \(\varepsilon_k\)'s are independent symmetric signs, that is,
independent random variables satisfying
\(\mathbb P(\varepsilon_k=1)=\mathbb P(\varepsilon_k=-1)=\tfrac12\).
Here \(\E_\varepsilon\) denotes expectation with respect to these signs.  
\end{lemma}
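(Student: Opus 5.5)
The plan is to combine two ingredients: a cotype-\(2\) inequality for \(L_p(\M,\tau)\) with an \(L_2\)-average of Rademacher sums on the right, and a Kahane--Khintchine inequality in the quasi-Banach space \(L_p\) that lets us swap that \(L_2\)-average for an \(L_p\)-average (\(p\)-th moment). Fix \(0<p<1\) and a finite family \((z_k)\subset L_p(\M,\tau)\).

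\textbf{Step 1: cotype \(2\).} By \cite[Corollary~5.8]{PisierXu2003}, noncommutative \(L_q\)-spaces have cotype \(\max(q,2)\), so \(L_p(\M,\tau)\) has cotype \(2\) for \(0<p<1\). Concretely, there is a constant \(C_p\), independent of \(\M\) and \(\tau\), with
\[
  \Bigl(\sum_k\|z_k\|_p^2\Bigr)^{1/2}
  \leq C_p\Bigl(\E_\varepsilon\Bigl\|\sum_k\varepsilon_kz_k\Bigr\|_p^2\Bigr)^{1/2}.
\]
If the cited corollary is stated only for \(p\ge1\), I would check that its quasi-Banach statement covers \(0<p<1\). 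Cotype is usually formulated with the second moment, or with the \(p\)-th moment by convention; I would verify which moment is used there.

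\textbf{Step 2: Kahane--Khintchine.} For a quasi-Banach space \(X\), Kalton's result \cite[Proposition~2.1]{Kalton2005} states that all moments of Rademacher sums are equivalent. In particular there is \(A_p\), depending only on \(p\) and the quasi-norm constant, such that for every finite family \((x_k)\subset X\)
\[
  \Bigl(\E\Bigl\|\sum_k\varepsilon_kx_k\Bigr\|^2\Bigr)^{1/2}
  \leq A_p\Bigl(\E\Bigl\|\sum_k\varepsilon_kx_k\Bigr\|^p\Bigr)^{1/p}.
\]
Apply this with \(X=L_p(\M,\tau)\), which is a \(p\)-normed space by \eqref{eq:p-triangle}. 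The constant then depends only on \(p\), so it is uniform over all \((\M,\tau)\).

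\textbf{Step 3: combine.} Chaining Steps 1 and 2 gives \eqref{eq:cotype} with \(K_p=C_pA_p\).

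\textbf{Main obstacle.} The difficulty is bookkeeping rather than mathematics. One has to confirm that both cited constants depend only on \(p\), not on \(\M\) or on the number of terms. One also has to confirm that the cotype result genuinely applies in the quasi-Banach range. If it does not, the fallback is to derive cotype \(2\) for \(0<p<1\) from the ingredients below:
\begin{enumerate}
\item the noncommutative Khintchine inequality for \(L_p\), \(0<p<1\), giving \(\E\|\sum_k\varepsilon_kz_k\|_p\) bounded below by \(\min\bigl\{\|(\sum_k|z_k|^2)^{1/2}\|_p,\ \|(\sum_k|z_k^*|^2)^{1/2}\|_p\bigr\}\) in the appropriate infimum form;
\item the elementary estimate \(\|(\sum_k|z_k|^2)^{1/2}\|_p\ge(\sum_k\|z_k\|_p^2)^{1/2}\) for \(p\le2\), which is the reverse Minkowski inequality in \(L_{p/2}\), \(p/2<1\), applied to \(|z_k|^2\).
\end{enumerate}
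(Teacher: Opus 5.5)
Your proof is correct and follows the paper's route: the paper obtains the lemma by chaining the cotype-\(2\) property of \(L_p(\M,\tau)\), \(0<p<1\), from \cite[Corollary~5.8]{PisierXu2003} with Kalton's quasi-Banach Kahane--Khintchine inequality \cite[Proposition~2.1]{Kalton2005}, which gives \(K_p=C_pA_p\) as you say. Only your fallback needs a correction: for \(p<2\) the noncommutative Khintchine lower bound is an infimum over decompositions \(z_k=y_k+w_k\) of the column square function of \((y_k)\) plus the row square function of \((w_k)\), not the minimum of the two square functions of \((z_k)\); in that corrected form, together with reverse Minkowski in \(L_{p/2}\) and the \(p\)-triangle inequality, it still yields cotype \(2\).
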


\subsection{Weak compactness}

For a Banach space \(X\), a subset \(K\subset X\) is called
\emph{relatively weakly compact} if its closure \(\overline K^{\,w}\)
in the weak topology \(\sigma(X,X^*)\) is weakly compact.

In the finite-trace setting, the following characterization
of relative weak compactness is given in
\cite[Corollary~2.6]{HaagerupRosenthalSukochev2003}.

\begin{proposition}\cite[Corollary~2.6]
{HaagerupRosenthalSukochev2003}\label{prop:equi}
\rev{Let \(\M\) be a von Neumann algebra equipped with a normal faithful
finite trace \(\tau\).}
Then \(K\subset L_1(\M,\tau)\) is relatively weakly compact if and only if
\begin{align}
 \lim_{R\to\infty}\sup_{x\in K}
 \tau\!\left(|x|\one_{(R,\infty)}(|x|)\right)=0.
 \label{eq:equi-spectral}
\end{align}
\end{proposition}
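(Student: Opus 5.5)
The plan is to prove the two implications separately. The direction ``\eqref{eq:equi-spectral} \(\Rightarrow\) relatively weakly compact'' uses a truncation argument together with Grothendieck's approximation lemma. The converse uses Akemann's characterization of relatively weakly compact subsets of a predual \(\M_*\): a bounded set \(K\subset\M_*\) is relatively weakly compact if and only if the functionals in \(K\) are uniformly continuous on the unit ball of \(\M\) for the strong\(^*\) topology. Since \(\tau\) is faithful, normal and finite, on bounded sets of \(\M\) this topology is the one given by the seminorms \(a\mapsto\tau(a^*a)^{1/2}\) and \(a\mapsto\tau(aa^*)^{1/2}\). Throughout, \(x\in L_1(\M,\tau)\) acts on \(\M\) by \(a\mapsto\tau(xa)\).

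For sufficiency, assume \eqref{eq:equi-spectral} holds and fix \(R\) such that \(\sup_{x\in K}\tau(|x|\one_{(R,\infty)}(|x|))\le 1\). Then \(\|x\|_1\le R\tau(\one)+1\) for all \(x\in K\), so \(K\) is bounded. Given \(\varepsilon>0\), choose \(R\) with the supremum in \eqref{eq:equi-spectral} below \(\varepsilon\). Write \(x=u|x|\) in polar form and split
\[
x=u|x|\one_{[0,R]}(|x|)+u|x|\one_{(R,\infty)}(|x|).
\]
The first summand lies in the operator-norm ball \(R\cdot\mathrm{Ball}(\M)\). This ball is weakly compact in \(L_1(\M,\tau)\) for the following reasons. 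It is weakly compact in \(L_2(\M,\tau)\), being bounded and weakly closed there. Moreover, the inclusion \(L_2\hookrightarrow L_1\) is bounded because \(\tau(\one)<\infty\), hence weak-to-weak continuous. The second summand has \(L_1\)-norm at most \(\varepsilon\). Thus, for every \(\varepsilon>0\), \(K\) is contained in a weakly compact set plus \(\varepsilon\,\mathrm{Ball}(L_1)\). Grothendieck's lemma then gives that \(K\) is relatively weakly compact.

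For necessity, suppose \(K\) is relatively weakly compact, hence bounded, say by \(C\). Suppose also that \eqref{eq:equi-spectral} fails. Then there are \(\varepsilon>0\), \(x_n\in K\) and \(R_n\to\infty\) such that, with \(e_n=\one_{(R_n,\infty)}(|x_n|)\), we have \(\tau(|x_n|e_n)\ge\varepsilon\). By Chebyshev, \(\tau(e_n)\le C/R_n\to0\). Let \(x_n=u_n|x_n|\) and set \(a_n=e_nu_n^*\in\mathrm{Ball}(\M)\). Then
\[
\tau(a_n^*a_n)=\tau(u_ne_nu_n^*)\le\tau(e_n)\to0,\qquad
\tau(a_na_n^*)\le\tau(e_n)\to0,
\]
so \(a_n\to0\) strong\(^*\) within the unit ball. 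Akemann's criterion now gives \(\sup_{x\in K}|\tau(xa_n)|\to0\). However, \(|x_n|\) commutes with its spectral projection \(e_n\), so
\[
\tau(x_na_n)=\tau(u_n|x_n|e_nu_n^*)=\tau(|x_n|e_n\,u_n^*u_n)=\tau(|x_n|e_n)\ge\varepsilon,
\]
using that \(u_n^*u_n\) is the support of \(|x_n|\) and hence of \(|x_n|e_n\). This is a contradiction.

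The main obstacle is the appeal to Akemann's theorem: one has to check that uniform strong\(^*\) continuity on the ball may be tested with the \(\tau\)-seminorms, which relies on \(\tau\) being a faithful normal finite trace. If one prefers to avoid Akemann, a fallback is a more hands-on route. In that route I would argue via the Eberlein--\v{S}mulian theorem together with the uniform \(\sigma\)-additivity of weakly convergent sequences in \(\M_*\), a Vitali--Hahn--Saks-type statement for projections due to Takesaki. I expect this to be the delicate step.
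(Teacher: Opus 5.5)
Your proof is correct. It necessarily takes a different route from the paper, because the paper gives no argument here: the statement is imported verbatim from \cite[Corollary~2.6]{HaagerupRosenthalSukochev2003}.

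\textbf{Sufficiency.} The truncation \(x=u|x|\one_{[0,R]}(|x|)+u|x|\one_{(R,\infty)}(|x|)\) puts \(K\) inside \(R\cdot\mathrm{Ball}(\M)+\varepsilon\,\mathrm{Ball}(L_1)\), and Grothendieck's lemma then applies. There is a quicker way to see that \(R\cdot\mathrm{Ball}(\M)\) is weakly compact in \(L_1(\M,\tau)\). The inclusion \(\M\hookrightarrow L_1(\M,\tau)\) is weak\(^*\)-to-weak continuous, because when \(\tau(\one)<\infty\) every \(b\in\M=L_1(\M,\tau)^*\) gives the normal functional \(\tau(\,\cdot\,b)\). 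This avoids the weak-closedness check in \(L_2\), which you asserted without justification.

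\textbf{Necessity.} The test elements \(a_n=e_nu_n^*\) work as you say.
\begin{itemize}
\item Your two seminorms coincide, since \(\tau(a^*a)=\tau(aa^*)\) for a trace.
\item For \(b\in\M\) one has \(\|ab\|_2\le\|b\|_\infty\|a\|_2\). Hence \(\|a_n\|_2\to0\) together with boundedness already forces \(a_n\to0\) strong\(^*\) on the standard form. The ``main obstacle'' you flag is therefore a two-line check.
\end{itemize}

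\textbf{Avoiding Akemann.} If you prefer to stay within the paper's own toolkit, put \(f_n=u_ne_nu_n^*\) and \(g_n=e_n\vee f_n\).
\begin{itemize}
\item Then \(\tau(g_n)\le2\tau(e_n)\to0\), so \(g_n\to0\) weak\(^*\).
\item On the other hand, \(\|g_nx_ng_n\|_1\ge\|f_nx_ne_n\|_1=\|x_ne_n\|_1=\tau(|x_n|e_n)\ge\varepsilon\).
\item This contradicts \lemref{lem:weak-compact-compression}.
\end{itemize}

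\textbf{What your route buys.} Compared with the bare citation, your proof shows exactly where finiteness of \(\tau\) is used:
\begin{itemize}
\item boundedness of \(K\);
\item weak compactness of the operator ball in \(L_1\);
\item small trace forcing strong\(^*\) convergence.
\end{itemize}
As a by-product it shows that \(K\) is relatively weakly compact if and only if \(\{|x|:x\in K\}\) is. This fails for infinite traces: the matrix units \(e_{1n}\) are weakly null in \(S_1(\ell_2)\), while \(|e_{1n}|=e_{nn}\) span \(\ell_1\).
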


We also recall the following consequence of the
weak compactness criterion
\cite[Theorem~4.14]{RaynaudXu2003}.

\begin{lemma}\label{lem:weak-compact-compression}
Let \(\M\) be an arbitrary von Neumann algebra,
and let \(K\subset\M_*\) be relatively weakly compact.
Then, for every net of projections
\(e_\alpha\to0\) in the weak-star topology,
\begin{align}
  \sup_{x\in K}\|e_\alpha x e_\alpha\|_1
  \longrightarrow0.
  \label{eq:typefree-diagonal-equi}
\end{align}
Here the norm is understood through the canonical
identification \(\M_*=L_1(\M)\).
\end{lemma}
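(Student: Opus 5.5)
We argue by contradiction, using the Raynaud--Xu characterization of relative weak compactness in \(\M_*\) (\cite[Theorem~4.14]{RaynaudXu2003}). In the form we use, it says that a bounded set \(K\subset\M_*\) is relatively weakly compact if and only if, for every sequence of mutually orthogonal projections \((f_n)\subset\M\),
\[
  \lim_{n\to\infty}\sup_{x\in K}|x(f_n)|=0 ,
\]
or, equivalently, \(\sup_{x\in K}\|f_nx\|_1+\|xf_n\|_1\to0\). (This is the noncommutative Akemann form of the Dunford--Pettis criterion.) The content of the lemma is that this disjoint-sequence condition upgrades to arbitrary weak-star null nets of projections, at least for the two-sided compressions \(e_\alpha xe_\alpha\).

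Suppose, then, that the conclusion fails. Passing to a subnet, we get \(\delta>0\), projections \(e_\alpha\to0\) weak-star, and \(x_\alpha\in K\) with \(\|e_\alpha x_\alpha e_\alpha\|_1>\delta\). The map \(x\mapsto e_\alpha xe_\alpha\) is contractive on \(\M_*\). We first use weak compactness: replacing \(K\) by its absolutely convex weakly closed hull, which is still weakly compact by Krein--Šmulian, lets us assume \(K\) is weakly compact and absolutely convex. Next, by Eberlein--Šmulian we may extract from \((x_\alpha)\) a sequence with corresponding projections \(e_k\). We choose the \(e_k\) inductively so that \(\|e_kx_je_k\|_1\) is small for all earlier \(j<k\); this is possible because, for fixed \(y\in\M_*\), weak-star convergence \(e_\alpha\to0\) gives \(\|e_\alpha ye_\alpha\|_1\to0\). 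Indeed, \(\|eye\|_1\le\|ey\|_1\), and for positive \(y\) we have \(\|ey\|_1\le y(e)^{1/2}\|y\|^{1/2}\) by Cauchy--Schwarz; a general \(y\) is handled by polar decomposition, \(y=v|y|\), which gives \(eye=(ev)|y|e\).

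The main obstacle is turning the sequence \((e_k)\), which need not be orthogonal, into mutually orthogonal projections while keeping the \(\delta\) lower bound. Here is the approach we would try. Set \(\varphi=\sum_j2^{-j}|x_j|/\|x_j\|\), a normal positive functional. By a standard approximation, replace each \(e_k\) by a subprojection \(q_k\le e_k\) on which the \(|x_j|\) and \(|x_j^*|\)-mass is concentrated, namely \(\|(e_k-q_k)x_k\|_1\) and \(\|x_k(e_k-q_k)\|_1\) small. Since \(e_k\to0\) weak-star, \(\varphi(e_k)\to0\). A perturbation argument for almost orthogonal projections, using \(\|q_iq_k\|_\varphi\) small, should then give a subsequence together with orthogonal projections \(f_k\) that are close to \(q_k\) in the \(\varphi\)-seminorm \(\|\cdot\|_{2,\varphi}\). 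This yields \(\|f_kx_kf_k\|_1\ge\delta/2\), contradicting the criterion.

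If this perturbation step is too delicate, we would argue through the dual instead. \(K\) is relatively weakly compact if and only if the restriction map \(\M\to\ell_\infty(K)\) is weak-star-to-norm continuous on bounded sets along monotone sequences (Akemann). For a non-monotone net we would use that the \(x\)'s in a countable subset are supported in a \(\sigma\)-finite corner \(p\M p\) with a faithful state \(\omega\). There, \(e_\alpha\to0\) weak-star gives \(\|e_\alpha\|_{2,\omega}\to0\), and uniform \(\omega\)-absolute continuity of the family \(\{|x|,|x^*|:x\in K\}\), which is equivalent to weak compactness, gives \(\|e_\alpha xe_\alpha\|_1\le\|xe_\alpha\|_1\to0\) uniformly in \(x\).

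The second route is the one we would actually write down.
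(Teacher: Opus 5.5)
There is a genuine gap. The second route, the one you say you would write down, rests on a false claim. Relative weak compactness of \(K\) does \emph{not} give uniform \(\omega\)-absolute continuity of \(\{|x|,|x^*|:x\in K\}\), and it does not give the one-sided estimate \(\sup_{x\in K}\|xe_\alpha\|_1\to0\).

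Take \(\M=B(\ell_2)\), \(\M_*=S_1\), and the matrix units \(x_n=e_{1n}\).
\begin{itemize}
\item For every \(y\in B(\ell_2)\), \(\operatorname{tr}(x_ny)=\langle ye_1,e_n\rangle\to0\), so \(K=\{0\}\cup\{x_n\}\) is weakly compact.
\item However \(|x_n|=e_{nn}\), so \(\{|x|:x\in K\}\) contains an isometric \(\ell_1\)-sequence.
\item For the projections \(e_{nn}\to0\) weak-star, one has \(\|x_ne_{nn}\|_1=1\) for every \(n\).
\end{itemize}
The same example refutes the ``equivalently'' clause of your first route, namely \(\sup_{x\in K}(\|f_nx\|_1+\|xf_n\|_1)\to0\) for orthogonal \(f_n\).

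The bound \(\|e_\alpha xe_\alpha\|_1\le\|xe_\alpha\|_1\) therefore discards exactly the two-sidedness that makes the lemma true. Indeed, here \(e_{nn}x_me_{nn}=0\) for all \(n\ge2\). Two further points are not handled:
\begin{itemize}
\item Your sketch does not explain how uniformity over a possibly uncountable \(K\), along an arbitrary net, reduces to a countable family in a \(\sigma\)-finite corner.
\item The perturbation-to-orthogonal step of the first route is only asserted (``should then give'').
\end{itemize}

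The paper simply quotes \cite[Theorem~4.14]{RaynaudXu2003}, which identifies relative weak compactness with \emph{two-sided} \(1\)-equiintegrability. For a self-contained argument, use Akemann's criterion in its strong\(^*\)-equicontinuity form. It gives \(\omega\in\M_*^+\) such that for every \(\varepsilon>0\) there is \(\delta>0\) with \(\sup_{x\in K}|\langle x,y\rangle|<\varepsilon\) whenever \(\|y\|_\infty\le1\) and \(\omega(y^*y+yy^*)<\delta\). Now let \(\|y\|_\infty\le1\) and put \(T=e_\alpha ye_\alpha\). Then \(T^*T\le e_\alpha\) and \(TT^*\le e_\alpha\), so normality of \(\omega\) gives \(\omega(T^*T+TT^*)\le2\omega(e_\alpha)\to0\). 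Hence \(\sup_{x\in K}\|e_\alpha xe_\alpha\|_1=\sup_{x\in K}\sup_{\|y\|_\infty\le1}|\langle x,e_\alpha ye_\alpha\rangle|\to0\).

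Both the left and the right smallness of \(T\) are used here. That is precisely why a one-sided compression cannot work. This argument also covers arbitrary nets and uncountable \(K\) without any \(\sigma\)-finite reduction.
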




\subsection{Martingale input}

A filtration of von Neumann subalgebras of \(\M\) is a sequence
\((\M_k)_{k\geq0}\) satisfying
\[
 \M_k\subseteq\M_{k+1}\subseteq\M\qquad(k\geq0).
\]
Let \((\M_k)_{k\geq0}\) be such a filtration of semifinite von Neumann
subalgebras for which the trace-preserving conditional
expectations \(\mathcal E_k:\M\to\M_k\) exist.  A sequence
\((d_k)_{k\geq1}\subset L_1(\M,\tau)\) is a martingale difference sequence
if \(d_k\in L_1(\M_k)\) and \(\mathcal E_{k-1}d_k=0\). 

We record the following finite-prefix version of
\cite[Theorem~3.1]{ParcetRandrianantoanina2006}.

\begin{theorem}
\label{thm:MT}
\rev{Let \(\M\) be a von Neumann algebra equipped with a normal faithful
semifinite trace \(\tau\), and let \((\M_k)_{k\geq0}\) be an increasing
filtration of semifinite von Neumann subalgebras for which the
trace-preserving conditional expectations
\(\mathcal E_k:\M\to\M_k\) exist.}
There is an absolute constant \(C_{\mathrm{MT}}\) with the following
property.  If \((d_k)_{k=1}^m\) is a finite martingale difference sequence
and
\[
 \xi_0=1,\qquad
 \xi_{k-1}\in\M_{k-1}\cap\M_k',\qquad
 \|\xi_{k-1}\|_\infty\leq1
 \quad(1\leq k\leq m),
\]
then
\begin{align}
 \left\|\sum_{k=1}^m\xi_{k-1}d_k\right\|_{1,\infty}
 \leq C_{\mathrm{MT}}
 \left\|\sum_{k=1}^m d_k\right\|_1.
 \label{eq:MT}
\end{align}
\end{theorem}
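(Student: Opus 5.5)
The plan is to deduce the statement directly from \cite[Theorem~3.1]{ParcetRandrianantoanina2006}. That result is a weak-type \((1,1)\) bound for noncommutative martingale transforms: predictable multipliers in the relative commutants, applied to an \(L_1\)-bounded martingale. The only work is to embed the finite data into an infinite martingale whose \(L_1\)-norm is exactly \(\|\sum_{k=1}^m d_k\|_1\), and to check that the hypotheses match. I take \(C_{\mathrm{MT}}\) to be the constant of that theorem; it is absolute, independent of \(\M\), the filtration and the multipliers.

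\textbf{Step 1: extend the data.} Set \(d_k=0\) for \(k>m\) and \(\xi_{k-1}=0\) for \(k>m\). The trivial choice \(0\in\M_{k-1}\cap\M_k'\) keeps every multiplier predictable and commuting, with \(\|\xi_{k-1}\|_\infty\le1\); the normalisation \(\xi_0=1\) is kept. Put \(x:=\sum_{k=1}^m d_k\in L_1(\M_m)\) and \(x_n:=\sum_{k\le n}d_k\) for \(n\ge0\), so that \(x_0=0\) and \(x_n=x\) for \(n\ge m\).

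\textbf{Step 2: identify the martingale.} I check that \(x_n=\mathcal E_n(x)\) for all \(n\).
\begin{itemize}
\item If \(k\le n\), then \(d_k\in L_1(\M_k)\subseteq L_1(\M_n)\), so \(\mathcal E_n d_k=d_k\).
\item If \(k>n\), then \(n\le k-1\), and the tower property gives \(\mathcal E_n d_k=\mathcal E_n\mathcal E_{k-1}d_k=0\).
\end{itemize}
Hence \((x_n)\) is an \(L_1\)-martingale with respect to \((\M_n)\), with difference sequence \((d_k)\) and initial term \(x_0=\mathcal E_0(x)=0\). Since trace-preserving conditional expectations are \(L_1\)-contractive,
\[
\sup_n\|x_n\|_1=\|x\|_1=\Bigl\|\sum_{k=1}^m d_k\Bigr\|_1 .
\]
For \(n\ge m\), the transformed martingale is
\[
\sum_{k\le n}\xi_{k-1}d_k=\sum_{k=1}^m\xi_{k-1}d_k .
\]

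\textbf{Step 3: apply the cited theorem.} Applying \cite[Theorem~3.1]{ParcetRandrianantoanina2006} to this martingale and these multipliers bounds the weak-\(L_1\) quasi-norm of the transform by \(C_{\mathrm{MT}}\sup_n\|x_n\|_1\). This is exactly \eqref{eq:MT}. If the theorem is formulated for the limit of the transformed martingale rather than for each finite stage, there is nothing to take a limit of here, because the transform is eventually constant.

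\textbf{Main obstacle.} The only delicate point is bookkeeping of hypotheses, not analysis. The cited theorem may be stated under the assumption that \(\M\) is generated by \(\bigcup_k\M_k\), or for filtrations of finite or semifinite algebras with a fixed convention on \(x_0\) and \(\xi_0\). To handle this I would do the following.
\begin{itemize}
\item Replace \(\M\) by the von Neumann algebra \(\M_\infty\) generated by \(\bigcup_k\M_k\). It carries the restricted trace, which is still semifinite because it contains the semifinite \(\M_0\).
\item Note that all \(d_k\) and \(\xi_{k-1}\) lie in \(\M_\infty\) or \(L_1(\M_\infty)\).
\item Note that the inclusion \(L_{1,\infty}(\M_\infty)\hookrightarrow L_{1,\infty}(\M)\) is isometric, since singular values are computed with the same trace.
\end{itemize}
The convention \(x_0=0\), \(\xi_0=1\) is exactly the one arranged in Step 2. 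Beyond these checks, the statement is a direct specialisation of the cited weak-type \((1,1)\) result.
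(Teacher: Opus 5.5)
Your proposal is correct and follows the paper's own proof. The paper likewise sets $x_j=\sum_{k\le j}d_k$ and observes that $x_j=\mathcal E_j x_m$, so that $L_1$-contractivity of the conditional expectations gives $\sup_j\|x_j\|_1=\|\sum_{k=1}^m d_k\|_1$, and then invokes \cite[Theorem~3.1]{ParcetRandrianantoanina2006}. Your extension by zero and the passage to the von Neumann algebra generated by $\bigcup_k\M_k$ are harmless extra bookkeeping that the paper omits.
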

\begin{proof}
  Put
  \[
  x_j=\sum_{k=1}^j d_k\qquad(1\leq j\leq m).
\]
Then \((x_j)_{j=1}^m\) is a martingale, \(dx_j=d_j\), and
\(x_j=\mathcal E_jx_m\).  Contractivity of \(\mathcal E_j\) on \(L_1\),
together with the case \(j=m\), gives
\[
  \sup_{1\leq j\leq m}\|x_j\|_1=\sup_{1\leq j\leq m}\|\mathcal E_jx_m\|_1
  =\|x_m\|_1
  =\left\|\sum_{k=1}^m d_k\right\|_1.
\]
Thus \eqref{eq:MT} is precisely the consequence of
\cite[Theorem~3.1]{ParcetRandrianantoanina2006}.
\end{proof}




\subsection{Haagerup
\texorpdfstring{\(L_p\)}{Lp}-spaces}
\label{Haagerup}

Let \(\M\) be an arbitrary von Neumann algebra and
fix a normal faithful semifinite weight \(\psi\).
 Let
\[
 \Core:=\M\rtimes_{\sigma^\psi}\mathbb R
\]
be the crossed product, let $(\theta_s)_{s\in\mathbb R}$ be its dual
action, and let $\Trc$ be the canonical normal faithful semifinite
trace on $\Core$, characterized by
\begin{align}\label{trc theta}
  \Trc\circ\theta_s=e^{-s}\Trc
  \qquad(s\in\mathbb R).
\end{align}
We write \(L_0(\Core,\Trc)\) for the space of all
\(\Trc\)-measurable operators affiliated with \(\Core\);
see \cite[Section~1.2]{HaagerupJungeXu2010}.

Since \(\Core\) is semifinite, we also consider the
usual tracial \(L_p\)-spaces
\cite{Nelson1974,FackKosaki1986}:
\[
  L_p(\Core,\Trc)
  =
  \left\{
    x\in L_0(\Core,\Trc):
    \Trc(|x|^p)<\infty
  \right\},
  \qquad 0<p<\infty,
\]
equipped with the norm (or quasi-norm when \(0<p<1\))
\[
  \|x\|_{L_p(\Core,\Trc)}
  =\bigl(\Trc(|x|^p)\bigr)^{1/p}.
\]
In particular, \(L_1(\Core,\Trc)\) denotes the
tracial \(L_1\)-space associated with the
semifinite algebra \((\Core,\Trc)\).

For \(0<p<\infty\), the Haagerup space \(L_p(\M)\)
is defined by
\[
  L_p(\M)=
  \left\{
    x\in L_0(\Core,\Trc):
    \theta_t(x)=e^{-t/p}x
    \text{ for every }t\in\mathbb R
  \right\}.
  \label{eq:haagerup-Lp}
\]
Let \(\operatorname{tr}_{\M}\) denote the canonical
trace functional on \(L_1(\M)\). The \(L_p\)-norm
(or quasi-norm when \(0<p<1\)) is given by
\[
  \|x\|_p
  =\bigl(\operatorname{tr}_{\M}(|x|^p)\bigr)^{1/p},
  \qquad x\in L_p(\M).
  \label{eq:haagerup-Lp-norm}
\]
These spaces are independent of the choice of
\(\psi\), up to canonical isometric identification;
see \cite{Haagerup1979,PisierXu2003} and
\cite[Section~1.2]{HaagerupJungeXu2010}.

In particular, \(L_1(\M)\) is canonically and
isometrically identified with \(\M_*\), and
\begin{align}
  \|x\|_1=\operatorname{tr}_{\M}(|x|),
  \qquad
  \langle x,a\rangle=\operatorname{tr}_{\M}(xa)
  \quad(x\in L_1(\M),\ a\in\M).
  \label{eq:haagerup-pairing}
\end{align}
In particular,
\begin{align}\label{tr le 1 infty}
  |\operatorname{tr}_{\M}(xa)|
  \leq\|x\|_1\|a\|_\infty,
  \qquad
  \|xa\|_1\leq\|x\|_1\|a\|_\infty.
\end{align}
The polar partial isometry of an element of
\(L_p(\M)\) belongs to \(\M\), and H\"older's
inequality holds for products of Haagerup
\(L_p\)-elements
\cite[Section~1.2]{HaagerupJungeXu2010}.
When \(\M\) is semifinite, these spaces agree
isometrically with the usual tracial \(L_p\)-spaces
\cite{Haagerup1979,PisierXu2003}.

\subsection{\(\sigma\)-finite von Neumann algebra}
\label{sec:core-window-prelim}
We recall the crossed-product construction of Haagerup \(L_p\)-spaces for \(\sigma\)-finite von Neumann algebras, following \cite[Sections~1.1--1.2]{HaagerupJungeXu2010}; see also \cite{Haagerup1979}.

Let \(\M\) be a \(\sigma\)-finite von Neumann algebra
and fix a normal faithful state \(\varphi\). Let
\[
  \Core=\M\rtimes_{\sigma^\varphi}\mathbb R
\]
be the corresponding crossed product, with canonical
trace \(\Trc\) and dual action \(\theta\).

Let \(D=h_\varphi\in L_1(\M)_+\) be the Haagerup
density of \(\varphi\). Then \(D\) has full support,
and
\begin{align}\label{theta_t}
  \varphi(a)=\operatorname{tr}_{\M}(Da)
  \quad(a\in\M),
  \qquad
  \theta_t(D)=e^{-t}D
  \quad(t\in\mathbb R).
\end{align}

Let \(\Phi:\Core_+\to\widehat{\M}_+\) denote the
canonical operator-valued weight, given by
\[
  \Phi(z)=\int_{\mathbb R}\theta_t(z)\,dt
  \qquad(z\in\Core_+);
\]
see \cite[(1.4)]{HaagerupJungeXu2010}.
The dual weight 
\begin{align}\label{def:widehat varphi}
    \widehat\varphi=\varphi\circ\Phi
\end{align}
satisfies
\begin{align}
  \widehat\varphi(z)
  =\Trc(D^{1/2}zD^{1/2})
  \qquad(z\in\Core_+),
  \label{eq:widehat-Phi-functional}
\end{align}
by \cite[Sections~1.1--1.2]{HaagerupJungeXu2010}.

For every bounded nonnegative Borel function \(f\)
such that
\[
  \int_0^\infty f(s)\,\frac{ds}{s}<\infty,
\]
the dual-action formula and spectral calculus give
\begin{align}
  \Phi(f(D))
  =
  \left(
    \int_0^\infty f(s)\,\frac{ds}{s}
  \right)\one.
  \label{eq:Phi-functional}
\end{align}
Indeed,
\[
  \Phi(f(D))
  =\int_{\mathbb R}f(e^{-t}D)\,dt,
\]
and the change of variables \(s=e^{-t}\lambda\)
shows that the integral is independent of
\(\lambda>0\).

For \(N\in\mathbb N\), define the spectral projection
\begin{align}
  P_N=\one_{[e^{-N},e^N)}(D).
  \label{eq:PN-def}
\end{align}
The dual-weight correspondence
\cite[(1.4), (1.6) and Section~1.2]
{HaagerupJungeXu2010}
gives
\begin{align}
  \Trc(P_NxP_Nz)
  =\bigl\langle x,\Phi(P_NzP_N)\bigr\rangle
  \quad
  (x\in L_1(\M),\ z\in\Core).
  \label{eq:window-weight-duality}
\end{align}
By \eqref{eq:Phi-functional}, we have
\[
  \Phi(P_N)=2N\one.
\]
Consequently, operator-valued-weight duality
\cite[Sections~1.1--1.2]{HaagerupJungeXu2010}
yields
\begin{align}
  \Trc(P_NxP_N)
  =2N\operatorname{tr}_{\M}(x)
  =2N\|x\|_1
  \qquad(x\in L_1(\M)_+).
  \label{eq:positive-window-mass}
\end{align}

\subsection{Equiintegrability}
We next recall the equiintegrability notions of Raynaud and Xu
\cite[Definition~4.10]{RaynaudXu2003}.  A bounded set
\(K\subset L_p(\M)\) is \(p-\)\emph{left equiintegrable} if
\[
 \sup_{x\in K}\|e_\alpha x\|_p\longrightarrow0
 \label{eq:left-equi}
\]
for every net of projections \(e_\alpha\to0\) in the weak-star topology;
\(p-\)right equiintegrability is defined with \(xe_\alpha\), and
\(p-\)\emph{biequiintegrable} means both.  

We shall use the following characterization of 
\(p\)-equiintegrability due to Raynaud and Xu
\cite[Proposition~4.13(i)--(ii)]{RaynaudXu2003}.

\begin{lemma}[{\cite[Proposition~4.13(i)--(ii)]{RaynaudXu2003}}]
\label{lem:equi-approx}
Let \(0<p<\infty\), and let
\(K\subset L_p(\M)\) be bounded.
Then the following assertions hold.
\begin{enumerate}
\item
The set \(K\) is left (respectively, right)
\(p\)-equiintegrable if and only if, for every
\(\varepsilon>0\), there exists
\(h_\varepsilon\in L_p(\M)\) such that
\[
  \sup_{x\in K}
  \inf_{\substack{a\in\M\\
  \|a\|_\infty\leq1}}
  \|x-h_\varepsilon a\|_p
  <\varepsilon
\]
(respectively,
\[
  \sup_{x\in K}
  \inf_{\substack{a\in\M\\
  \|a\|_\infty\leq1}}
  \|x-ah_\varepsilon\|_p
  <\varepsilon).
\]

\item
The set \(K\) is \(p\)-equiintegrable
if and only if, for every
\(\varepsilon>0\), there exists
\(h_\varepsilon\in L_p(\M)\) such that
\[
  \sup_{x\in K}
  \inf_{\substack{a,b\in\M\\
  \|a\|_\infty,\|b\|_\infty\leq1}}
  \|x-ah_\varepsilon-h_\varepsilon b\|_p
  <\varepsilon.
\]
\end{enumerate}
If \(\M\) is \(\sigma\)-finite, one may take
\(h_\varepsilon=C_\varepsilon h_0\), where
\(C_\varepsilon>0\) and
\(h_0\in L_p(\M)_+\) is any fixed element
with full support.
\end{lemma}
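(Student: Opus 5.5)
The plan is to follow the duality/factorization route of Raynaud and Xu. I treat the left case of (i) in detail; the right case follows by taking adjoints, and (ii) is a two-sided variant.

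\emph{Easy direction of (i).} Suppose that for every \(\varepsilon>0\) there is \(h_\varepsilon\) with \(\sup_{x\in K}\inf_{\|a\|_\infty\le1}\|x-h_\varepsilon a\|_p<\varepsilon\). Then for a net of projections \(e_\alpha\to0\) weak-star we have \(\|e_\alpha x\|_p\lesssim\|e_\alpha h_\varepsilon\|_p+\varepsilon\). When \(p<1\) the implicit constant comes from the \(p\)-triangle inequality \eqref{eq:p-triangle}. It therefore suffices to show that a single element \(h\in L_p(\M)\) is left equiintegrable. For this I use
\[
\|e_\alpha h\|_p^p=\operatorname{tr}_\M\bigl((e_\alpha|h^*|^2e_\alpha)^{p/2}\bigr).
\]
I then approximate \(|h^*|\) in \(L_p\) by an element of the form \(\phi^{1/p}b\) (or \(b\phi^{1/p}\)), with \(\phi\in\M_*^+\) and \(b\in\M\). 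Since a weak-star null net of projections is strongly null, \(\phi(e_\alpha)\to0\), and Hölder's inequality gives \(\|e_\alpha\phi^{1/p}\|_p=\phi(e_\alpha)^{1/p}\to0\).

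\emph{Hard direction of (i).} The first step is to pass from \(K\) to the positive set
\[
S=\bigl\{|x^*|^p:x\in K\bigr\}\subset L_1(\M)_+.
\]
I want to show that \(S\) satisfies the Akemann-type condition: \(\sup_{y\in S}\operatorname{tr}_\M(e_\alpha y)\to0\) whenever \(e_\alpha\to0\) weak-star. For \(p\ge2\) this follows from left equiintegrability via \(\operatorname{tr}_\M(e|x^*|^pe)\le\|ex\|_p^{\,\cdot}\)-type Hölder bounds. For \(p<2\) I would use the Raynaud--Xu comparison between \(\|ex\|_p\) and \(\operatorname{tr}_\M(e|x^*|^pe)\).

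Next comes a Hahn--Banach/minimax step. Applied to the convex hull of \(S\), it yields, for each \(\delta>0\), a functional \(\phi_\delta\in\M_*^+\) such that every \(y\in S\) decomposes as \(y\le C_\delta\phi_\delta+r\) with \(\|r\|_1<\delta\). This is the noncommutative analogue of "uniform integrability equals domination up to \(\delta\)".

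Finally I set \(h_\varepsilon=(C_\delta\phi_\delta)^{1/p}\) and factor each \(x\) as \(x\approx h_\varepsilon a\) with \(\|a\|_\infty\le1\). Concretely, I take \(a=h_\varepsilon^{-1}q\,x\), where \(q\) is a spectral cutoff. Contractivity of \(a\) comes from a Douglas-type lemma: if \(xx^*\le h^2\) then \(x=hc\) with \(\|c\|\le1\). The error term is controlled by \(\|r\|_1\) and by the mass of \(x\) outside the cutoff.

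\emph{Main obstacle.} The hard point is exactly this last passage. Domination of \(|x^*|^p\) does not directly give domination of \(|x^*|^2\) by \(h^2\), because \(t\mapsto t^{2/p}\) is not operator monotone when \(p<2\). My first attempt would avoid monotonicity altogether. I would compress \(x\) by the spectral projection \(q=\one_{[0,\lambda]}(|x^*|)\) (or by a cutoff of the dominating density) and estimate \(\|(1-q)x\|_p\) by a Chebyshev argument using \(\delta\). On the range of \(q\) I would build \(a\) by hand, via the Haagerup--Hölder inequality, arranging \(\|a\|_\infty\le1\) after enlarging the constant \(C_\varepsilon\).

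\emph{Part (ii).} The implication "\(\Leftarrow\)" follows from the single-element argument above, applied on both sides. For "\(\Rightarrow\)", I combine a left factorization of \(e^\perp x\) with a right factorization of \(xe^\perp\), using a common \(h_\varepsilon\) obtained by adding the two functionals.

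\emph{\(\sigma\)-finite refinement.} If \(\M\) is \(\sigma\)-finite and \(h_0\) has full support, every \(\phi\in\M_*^+\) can be approximated in the relevant sense by \(\phi\) restricted to spectral projections of \(h_0\) on which \(\phi\le C h_0^p\). The spectral projection \(\one_{(\lambda,\infty)}(h_0)\) tends to \(0\) weak-star as \(\lambda\to\infty\), so equiintegrability absorbs the remainder. Hence \(h_\varepsilon\) can be replaced by \(C_\varepsilon h_0\).
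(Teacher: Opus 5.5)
The paper does not prove this lemma; it only cites \cite[Proposition~4.13]{RaynaudXu2003}. Your sketch therefore has to stand on its own as a complete proof. It does not: the hard direction of (i) is only settled for $p\ge2$, and there is nothing in its place for $0<p<2$ outside the semifinite case.

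\emph{The case $p\ge2$.} After Step~2 you have $|x^*|^p\le C\phi+r_x$ with $r_x\ge0$ and $\|r_x\|_1<\delta$. Here no cutoff is needed. Since $t\mapsto t^{2/p}$ is operator monotone, $xx^*\le k_x^2$ with $k_x=(C\phi+r_x)^{1/p}$. Douglas' lemma gives $x=k_xc_x$ with $\|c_x\|_\infty\le1$, and $c_x\in\M$ by $\theta$-homogeneity. A Powers--St{\o}rmer type estimate $\|k_x-(C\phi)^{1/p}\|_p^p\le\|r_x\|_1$ then finishes.

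\emph{The case $0<p<2$.} Here you offer only a truncation idea and ``build $a$ by hand via Haagerup--H\"older''. In a semifinite algebra, working in the tracial picture, the truncation idea can in fact be completed. Take a common left cutoff $e=\one_{[\mu,\lambda]}(\phi)$ of finite trace with $\phi(\one-e)$ small. Then $ex$ equals $(Me)\cdot M^{-1}e|x^*|\one_{[0,M]}(|x^*|)u$ up to $e|x^*|\one_{(M,\infty)}(|x^*|)u$, and the latter is uniformly small by relative weak compactness of $\{|x^*|^p\}$.

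The lemma, however, is stated for arbitrary $\M$ in the Haagerup picture. There $\one_{[0,\lambda]}(|x^*|)$ and $\one_{[\mu,\lambda]}(\phi)$ are projections of $\Core$ that $\theta$ does not fix; for instance, $\theta_t$ sends the former to $\one_{[0,e^{t/p}\lambda]}(|x^*|)$. So these projections are not in $\M$, $qx\notin L_p(\M)$, and nothing in your sketch replaces truncation. This is exactly the case the paper needs: part~(ii) with $p=1$ on a $\sigma$-finite, possibly type~III algebra in \propref{prop:dp-l2ss}.

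Two further steps also need repair. In the $\sigma$-finite refinement, $\one_{(\lambda,\infty)}(h_0)$ again lies in $\Core$ and not in $\M$. Equiintegrability only concerns projections of $\M$, so it says nothing about this projection. The correct argument is simpler. Since $s(h_0)=\one$, the set $h_0\M$ (resp.\ $\M h_0$) is dense in $L_p(\M)$; for $p\ge1$ this follows by duality. Approximate the single element $h_\varepsilon$ by $h_0m$ with $m\in\M$, and absorb $\|m\|_\infty$ into $C_\varepsilon$.

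In (ii) ``$\Rightarrow$'' you never specify the projection $e$. A merely two-sided equiintegrable set need not be left or right equiintegrable. For example, the matrix units $\{E_{1n},E_{n1}:n\ge1\}$ in $L_1(B(\ell_2),\operatorname{tr})$ form a two-sided equiintegrable set that is neither left nor right equiintegrable. So $e^\perp x$ and $xe^\perp$ cannot be fed into (i) until $e$ has been constructed and the relevant one-sided control proved.

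Finally, a minor point in the easy direction: $\|e\phi^{1/p}\|_p=\phi(e)^{1/p}$ holds only for $p=2$. What is true is $\|e\phi^{1/p}\|_p\le\phi(e)^{1/2}\phi(\one)^{1/p-1/2}$ for $p\le2$ and $\|e\phi^{1/p}\|_p^p\le\phi(e)$ for $p\ge2$. Either bound suffices.
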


\subsection{Sequence-space}

We call a sequence \((x_n)\) \emph{seminormalized} when
\[0<\inf_n\|x_n\|\leq\sup_n\|x_n\|<\infty.\]
A sequence \((x_n)\) in a Banach space \(X\) is called
\emph{weakly null} if \(x_n\weakto0\), that is, if
\(f(x_n)\to0\) for every \(f\in X^*\).  
For \(N\geq1\), the sequence \((x_n)_{n\geq N}\) is called a \emph{tail}
of \((x_n)\).

We use the terminology of
\cite[Definition~1]{AldousFremlin1982}; see also
\cite[Section~3.3]{ParcetRandrianantoanina2006}.

\begin{definition}\label{def:colac}
A sequence \((x_n)\) in a normed space \(X\) is \(2\)-co-lacunary if there
is \(\gamma>0\) such that
\begin{align}
  \left\|\sum_n a_nx_n\right\|_X
  \geq\gamma\left(\sum_n|a_n|^2\right)^{1/2}
  \label{eq:2colac}
\end{align}
for every finitely supported scalar sequence \((a_n)\).
\end{definition}

We shall need two stability properties of \(2\)-co-lacunary sequences,
due to Aldous and Fremlin \cite[Lemma~2(a),(b)]{AldousFremlin1982};
see also \cite[Lemma~3.8]{ParcetRandrianantoanina2006}.

\begin{lemma}
\cite[Lemma~2(b)]{AldousFremlin1982}\label{lem:perturb}
Suppose \((u_n)\) is \(2\)-co-lacunary with lower constant \(\gamma\), and
\(\sum_n\|u_n-v_n\|<\infty\).  Then a tail of \((v_n)\) is
\(2\)-co-lacunary.
\end{lemma}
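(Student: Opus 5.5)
The plan is the standard Aldous--Fremlin perturbation argument. It uses only the triangle inequality and Cauchy--Schwarz. Let \(\gamma>0\) be the lower constant of \((u_n)\) and write \(\delta_n=\|u_n-v_n\|\). Since \(\sum_n\delta_n<\infty\), we also have \(\sum_n\delta_n^2<\infty\). Choose \(N\) so large that
\[
  \Bigl(\sum_{n\geq N}\delta_n^2\Bigr)^{1/2}\leq\frac{\gamma}{2}.
\]
The claim is that the tail \((v_n)_{n\geq N}\) is \(2\)-co-lacunary with constant \(\gamma/2\).

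Fix a finitely supported scalar sequence \((a_n)_{n\geq N}\). Since \((u_n)_{n\geq N}\) is a subfamily of a \(2\)-co-lacunary sequence, it is \(2\)-co-lacunary with the same constant: extend \((a_n)\) by zero for \(n<N\). We then estimate
\[
  \Bigl\|\sum_{n\geq N}a_nv_n\Bigr\|
  \geq\Bigl\|\sum_{n\geq N}a_nu_n\Bigr\|-\sum_{n\geq N}|a_n|\,\delta_n
  \geq\gamma\Bigl(\sum_n|a_n|^2\Bigr)^{1/2}
  -\Bigl(\sum_{n\geq N}\delta_n^2\Bigr)^{1/2}\Bigl(\sum_n|a_n|^2\Bigr)^{1/2}.
\]
The second step applies Cauchy--Schwarz to the error term. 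By the choice of \(N\), the right-hand side is at least \(\frac{\gamma}{2}\bigl(\sum_n|a_n|^2\bigr)^{1/2}\). This is exactly \eqref{eq:2colac} for the tail, with constant \(\gamma/2\).

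There is no genuine obstacle here. The only point requiring care is that the summability hypothesis enters through \(\ell_2\) and not \(\ell_1\). We only need the tail of \((\delta_n)\) to be small in \(\ell_2\)-norm, and \(\ell_1\)-summability implies this. This is also why one must pass to a tail rather than keep the full sequence. An early \(v_n\) could even be zero, or some early \(v_n\) could be linearly dependent, and then no lower estimate holds for the whole sequence.
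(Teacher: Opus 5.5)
Your argument is correct. The triangle inequality together with Cauchy--Schwarz on the tail gives the lower constant \(\gamma/2\), which is the standard perturbation argument behind \cite[Lemma~2(b)]{AldousFremlin1982}. The paper only cites that lemma and does not reproduce a proof. As you observe, your Cauchy--Schwarz step shows that \(\sum_n\|u_n-v_n\|^2<\infty\) already suffices; under the stated \(\ell_1\)-hypothesis one could instead bound the error by \(\sup_n|a_n|\sum_{n\geq N}\|u_n-v_n\|\).
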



\begin{lemma}
\cite[Lemma~2(a)]{AldousFremlin1982}\label{lem:translation}
If \((u_n)\) is a bounded \(2\)-co-lacunary sequence in \(X\) and \(u\in X\),
then some tail of \((u_n+u)\) is \(2\)-co-lacunary.
\end{lemma}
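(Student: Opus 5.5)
This is Aldous--Fremlin's translation lemma; I would prove it directly from the definition. Let \((u_n)\) be bounded, say \(\|u_n\|\leq M\), and \(2\)-co-lacunary with constant \(\gamma\). Fix \(u\in X\). The first step disposes of the trivial case \(u=0\), and then separates the two ways a linear combination \(\sum_n a_n(u_n+u)=\sum_n a_nu_n+(\sum_n a_n)u\) can be small.

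The key observation is that \((u_n)\) cannot be close to \(-u\) in a summable way. More precisely, \(2\)-co-lacunarity forces \(\|u_n-u_m\|\geq\gamma\sqrt2\) for \(n\neq m\). It also forces the averages \(\frac1k\sum_{n\in F}u_n\), with \(|F|=k\), to have norm at least \(\gamma/\sqrt k\), though this is weak.

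The better approach is a quantitative split. Take \((a_n)\) finitely supported, supported in a tail \(n\geq n_0\), with \(\|a\|_2=1\), and put \(s=\sum a_n\). The goal is a constant \(\delta>0\), independent of \(a\), such that \(\|\sum a_nu_n+su\|\geq\delta\). If \(|s|\,\|u\|\leq\gamma/2\), the triangle inequality gives norm at least \(\gamma/2\). Otherwise \(|s|\) is bounded below by \(\gamma/(2\|u\|)\). In that case I would use a functional argument: by Hahn--Banach pick \(f\in X^*\) with \(\|f\|=1\) and \(f(u)=\|u\|\).

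The obstacle is that \(f(u_n)\) need not be small. Here I would use that a \(2\)-co-lacunary bounded sequence has no subsequence converging in norm. Passing to a tail is not the same as passing to a subsequence, however. So instead I would argue by contradiction: suppose no tail works. Then there are normalized blocks \(a^{(j)}\), supported beyond \(n_j\to\infty\), with \(\|\sum a^{(j)}_nu_n+s_ju\|\to0\). This gives \(s_j u=-\sum a^{(j)}_nu_n+o(1)\), with \(|s_j|\gtrsim1\).

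Now take two such blocks \(j<k\) whose supports are disjoint; this is possible after thinning, since the supports are finite and move to infinity. Consider \(s_k a^{(j)}-s_j a^{(k)}\). Its \(\ell_2\)-norm is at least \(\sqrt{|s_k|^2+|s_j|^2}\gtrsim1\) by disjointness. On the other hand,
\[
\Bigl\|\sum_n(s_ka^{(j)}_n-s_ja^{(k)}_n)u_n\Bigr\|\leq|s_k|\,o(1)+|s_j|\,o(1),
\]
because the \(u\)-terms cancel: \(s_k s_j u-s_j s_k u=0\). Here \(|s_j|\leq\sqrt{|\operatorname{supp}|}\) is not bounded a priori, so I would normalize by \(\max(|s_j|,|s_k|)\). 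Set \(b=(s_ka^{(j)}-s_ja^{(k)})/\max(|s_j|,|s_k|)\). Then \(\|b\|_2\geq1/\sqrt2\cdot\min(\ldots)\). More cleanly, \(\|b\|_2^2=(|s_k|^2+|s_j|^2)/\max^2\geq1\), while \(\|\sum b_nu_n\|\leq2\,o(1)\). This contradicts \(2\)-co-lacunarity of \((u_n)\) with constant \(\gamma\).

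Hence some tail satisfies the lower estimate with a uniform \(\delta\), which is exactly \(2\)-co-lacunarity of \((u_n+u)_{n\geq n_0}\). Boundedness of \((u_n)\) is not even essential beyond ensuring the statement makes sense. The main point to check carefully is the disjoint-support cancellation step, including the complex-scalar case, which works verbatim.
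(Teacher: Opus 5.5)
Your argument is correct. It takes a genuinely different route from the paper only in the sense that the paper gives no proof of this lemma: it imports it from Aldous--Fremlin [Lemma~2(a)], whereas you give a self-contained proof.

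The heart of your proof is the zero-sum combination. Take normalized blocks \(a,a'\) with disjoint supports, sums \(s=\sum_n a_n\) and \(s'=\sum_n a'_n\), and defects \(\epsilon=\|\sum_n a_n(u_n+u)\|\) and \(\epsilon'=\|\sum_n a'_n(u_n+u)\|\). The vector \(c=s'a-sa'\) satisfies \(\sum_n c_n=0\) and \(\|c\|_2^2=|s|^2+|s'|^2\), so
\[
\gamma\bigl(|s|^2+|s'|^2\bigr)^{1/2}\le\Bigl\|\sum_n c_nu_n\Bigr\|=\Bigl\|\sum_n c_n(u_n+u)\Bigr\|\le|s'|\,\epsilon+|s|\,\epsilon'.
\]
Dividing by \((|s|^2+|s'|^2)^{1/2}\) gives \(\gamma\le\epsilon+\epsilon'\). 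When \(s=s'=0\), the same bound is immediate from the co-lacunarity of \((u_n)\) itself. This inequality uses neither \(\sup_n\|u_n\|<\infty\), nor any bound on \(|s_j|\), nor \(u\neq0\), so your remark that boundedness is inessential is right.

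The inequality also lets you drop the contradiction and get an explicit constant. If some normalized block \(a\) has \(\epsilon<\gamma/2\), then every normalized block supported beyond \(\max\operatorname{supp}a\) has \(\epsilon'>\gamma/2\). Hence that tail of \((u_n+u)\) is \(2\)-co-lacunary with constant \(\gamma/2\). If no such block exists, the whole sequence already is.

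Compared with the citation, your route costs a few lines. In exchange it is elementary, works verbatim over \(\mathbb{R}\) or \(\mathbb{C}\) in any normed space, and yields the constant \(\gamma/2\) with an explicit tail index.

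In a write-up, delete the exploratory material, none of which is used:
\begin{itemize}
\item the case split on \(|s|\,\|u\|\);
\item the Hahn--Banach functional;
\item the remarks on averages and on norm-convergent subsequences;
\item the false start \(\|b\|_2\ge 1/\sqrt2\cdot\min(\ldots)\), which is superseded by \(\|b\|_2\ge1\).
\end{itemize}
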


We shall also use the following four-corner
subsequence decomposition of Raynaud and Xu
\cite[Corollary~4.12]{RaynaudXu2003}.

\begin{lemma}\label{lem:four-corners}
Let \(\M\) be an arbitrary von Neumann algebra,
and let \((x_n)\) be a bounded sequence in
\(L_1(\M)\). After passing to a subsequence
and relabeling it as \((x_k)\), there exist
pairwise orthogonal projections \((p_k)\subset\M\)
such that, with
\begin{align}
  \begin{aligned}
    a_k&=p_k^\perp x_kp_k^\perp,&
    b_k&=p_k^\perp x_kp_k,\\
    c_k&=p_kx_kp_k^\perp,&
    d_k&=p_kx_kp_k,
  \end{aligned}
  \label{eq:four-corners}
\end{align}
the sequence \((a_k)\) is biequiintegrable,
\((b_k)\) is left equiintegrable and right
disjoint, \((c_k)\) is right equiintegrable
and left disjoint, and \((d_k)\) is disjoint.
\end{lemma}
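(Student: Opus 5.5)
The statement just given, \lemref{lem:four-corners}, is quoted from Raynaud--Xu \cite[Corollary~4.12]{RaynaudXu2003}. I would not reprove it from scratch; instead I would reduce it to two results of \cite{RaynaudXu2003}. The first is a subsequence splitting theorem for \(L_1(\M)\): every bounded sequence has a subsequence that splits, up to small norm errors, into a biequiintegrable part plus a part carried by pairwise orthogonal projections. The second is the one-sided characterization in \lemref{lem:equi-approx}.

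\textbf{Step 1: the orthogonal projections.} Pass to a subsequence and apply the splitting argument. For each projection \(q\), define
\[
\delta(q)=\limsup_k \max\bigl(\|q x_k\|_1,\|x_k q\|_1\bigr).
\]
Choose projections \(q_j\) decreasing weak-star to \(0\), with \(q_j-q_{j+1}\) supported where the mass of the sequence escapes. A diagonal argument then gives pairwise orthogonal projections \(p_k\), obtained as differences of such \(q\)'s along a sparse subsequence. They are chosen so that the part of \(x_k\) away from \(p_k\) asymptotically loses no mass along weak-star null nets.

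\textbf{Step 2: the four corners.} Define \(a_k,b_k,c_k,d_k\) as in \eqref{eq:four-corners}.
\begin{itemize}
\item Disjointness of \((d_k)\) is immediate, since \(d_k=p_kd_kp_k\) and the \(p_k\) are orthogonal.
\item The right support of \(b_k\) lies under \(p_k\), so \((b_k)\) is right disjoint.
\item Symmetrically, \((c_k)\) is left disjoint.
\end{itemize}
The real content is the equiintegrability:
\begin{itemize}
\item \((a_k)\) must be biequiintegrable;
\item \((b_k)\) must be left equiintegrable.
\end{itemize}
For left equiintegrability of \((p_k^\perp x_k)\), I would argue by contradiction. Suppose there is a weak-star null net \(e_\alpha\) and \(\eta>0\) with \(\|e_\alpha p_k^\perp x_k\|_1\ge\eta\) for infinitely many \(k\). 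Then one can extract further orthogonal projections carrying mass \(\eta\) that were not absorbed into \(p_k\). This contradicts the maximality built into the choice of \(p_k\), namely that the total escaping mass was exhausted, as in the Kadec--Pe\l{}czy\'nski subsequence splitting.

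\textbf{Main obstacle.} The main difficulty is the non-tracial, non-semifinite setting. There is no distribution function, so the maximality argument has to be phrased via the projection-valued quantity \(\delta(\cdot)\) and weak-star nets, not via spectral truncation.

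In practice, since the paper cites this result verbatim, the proof here would consist of a precise pointer: \cite[Corollary~4.12]{RaynaudXu2003}, applied with \(p=1\) to the bounded sequence \((x_n)\) in \(L_1(\M)=\M_*\). That corollary is stated for arbitrary von Neumann algebras and Haagerup \(L_p\), so no reduction is needed.
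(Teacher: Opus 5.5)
Your proposal is correct and matches the paper: the paper gives no independent argument for this lemma and simply quotes \cite[Corollary~4.12]{RaynaudXu2003} for the bounded sequence in \(L_1(\M)=\M_*\), with no reduction needed. Treat your Steps~1--2 only as motivation, since the exhaustion by a single decreasing sequence \(q_j\) and the ``maximality'' contradiction are not rigorous as written, so the citation rather than the sketch carries the proof.
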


\subsection{Well-founded trees and prunings}
\label{sec:tree-prelim}

We recall the tree terminology used in the proof of
the finite-trace case, following
\cite{AlspachJuddOdell2005,CauseyDoebele2020}.

A \emph{tree} is a partially ordered set
\((W,\preceq)\) such that, for each \(t\in W\),
the predecessor set
\[
  \{s\in W:s\preceq t\}
\]
is finite and linearly ordered
\cite[Definition~2.1]{AlspachJuddOdell2005}.
The elements of \(W\) are called \emph{nodes}.
If \(s\preceq t\), then \(s\) is a
\emph{predecessor}, or \emph{ancestor}, of \(t\).
The tree is \emph{rooted} if it has a least node
\(r_W\), called its \emph{root}.
If \(s\prec t\) with no node strictly between them,
then \(t\) is an \emph{immediate child} of \(s\).
A node \(t\succeq s\) is a \emph{descendant} of \(s\),
and
\[
  W_s=\{t\in W:s\preceq t\}
\]
is the descendant tree at \(s\);
cf.\ \cite[Section~2]{CauseyDoebele2020}.

The \emph{depth} of a node \(t\in W\) is defined by
\[
  \operatorname{depth}(t)
  =\#\{s\in W:s\prec t\}.
\]
Thus the root has depth \(0\), and every immediate
child of a node of depth \(k\) has depth \(k+1\).
A maximal node is called \emph{terminal}, or a
\emph{leaf}, and \(\Term(W)\) denotes the set of
terminal nodes. All other nodes are \emph{internal}.

A \emph{branch} is a linearly ordered subset of \(W\).
The tree \(W\) is \emph{well founded} if it has no
infinite strictly increasing branch
\cite[Proposition~2.1]{CauseyDoebele2020}.
For a well-founded tree, the node rank is defined
recursively by
\[
  \rho_W(s)
  =\sup\{\rho_W(t)+1:
  t\text{ is an immediate child of }s\},
  \qquad \sup\varnothing=0.
\]

A \emph{prefix-closed pruning} of \(W\) is a rooted
subtree \(W'\subset W\) such that every predecessor
in \(W\) of a node of \(W'\) also belongs to \(W'\).
A terminal descendant of \(s\) is a node in
\(W_s\cap\Term(W)\); in particular, a terminal node
is its own terminal descendant.

For trees of finite sequences, \(s^\frown n\)
denotes concatenation by \(n\), and \(s|k\)
denotes the prefix of length \(k\).

\section{Proof of the main theorem}\label{sec:finite}

We begin with the finite-trace case. The proof uses
a well-founded tree to encode the first failures of
a lower \(\ell_2\)-estimate. We first establish the
following pruning lemma, using the terminology
introduced in \secref{sec:tree-prelim}.



\begin{lemma}\label{lem:reading}
Let \(W\) be a rooted well-founded tree in which every internal node has
infinitely many immediate children.  For every nonroot node \(t\), let
\[
 q_t:\{\text{terminal descendants of }t\}\longrightarrow Q_t
\]
be an arbitrary map into a finite set.  There is a prefix-closed
pruning \(W'\subset W\) containing the root of \(W\) such that
\begin{enumerate}
\item if a retained node \(t\) is internal in \(W\), then \(W'\) contains
infinitely many original immediate children of \(t\);
\item a retained node is terminal in \(W'\) if and only if it is terminal
in \(W\);
\item for every retained nonroot node \(t\), the map \(q_t\) is constant on
all retained terminal descendants of \(t\).
\end{enumerate}
\end{lemma}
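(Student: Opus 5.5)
We construct \(W'\) by transfinite induction on the rank \(\rho_W\), proving a stronger, localized statement: for every node \(s\in W\) there is a prefix-closed pruning \(W'_s\subset W_s\) rooted at \(s\) that satisfies (1)--(3) inside \(W_s\). Here (3) is required for every retained node \(t\succeq s\), and when \(s\) is nonroot this includes \(t=s\) itself. The lemma is then the case \(s=r_W\); the root is exempt from (3), which is consistent with the statement, since \(q_{r_W}\) is not defined.

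If \(\rho_W(s)=0\), then \(s\) is terminal, and we take \(W'_s=\{s\}\). Conditions (1) and (2) hold trivially, and (3) holds because \(q_s\) is a map on the one-point set \(\{s\}\).

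Now let \(s\) be internal, so by hypothesis it has infinitely many immediate children \(t_1,t_2,\dots\). Each \(t_j\) satisfies \(\rho_W(t_j)<\rho_W(s)\), so the induction hypothesis gives prunings \(W'_{t_j}\) of \(W_{t_j}\) satisfying (1)--(3), with (3) holding for every retained node including \(t_j\). In particular, \(q_{t_j}\) is constant on the retained terminal descendants of \(t_j\); call its value \(v_j\in Q_{t_j}\).

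The key step is to make \(q_s\) constant as well. Every terminal descendant \(u\) of \(s\) lies below exactly one child \(t_j\), so we set \(c_j:=q_s(u)\) for any retained terminal \(u\) in \(W'_{t_j}\). We must check that this value is independent of the choice of \(u\). This is the one subtle point, and we resolve it by building one extra property into the induction: the pruning \(W'_{t_j}\) should also make \(q_r\) constant for every ancestor \(r\) of \(t_j\), not just for nodes inside \(W_{t_j}\).

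To arrange this, we strengthen the induction hypothesis as follows. For every node \(s\) and every finite family of maps \(q_r\) indexed by the ancestors \(r\preceq s\) with \(r\ne r_W\), each restricted to terminal descendants of \(s\), there is a pruning of \(W_s\) satisfying (1)--(3) on which all of these maps are constant. The finitely many ancestor maps combine into a single map into the finite product \(\prod_{r} Q_r\), so they can be carried along at no cost. In the inductive step, we apply this strengthened hypothesis at each child \(t_j\), with the ancestor family of \(s\) together with \(q_{t_j}\) itself. Call the resulting tuple of constant values \(c_j\in\prod_{r\preceq s} Q_r\); it is well defined because the ancestor maps \(q_r\) are already constant on the retained terminal descendants of \(t_j\).

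Since \(\prod_{r\preceq s}Q_r\) is finite, the pigeonhole principle gives an infinite set \(J\) of indices \(j\) sharing a common value of \(c_j\). We set
\[
W'_s=\{s\}\cup\bigcup_{j\in J}W'_{t_j}.
\]
This set is prefix-closed and rooted at \(s\). Condition (1) holds at \(s\) because \(J\) is infinite, and it holds below \(s\) by the induction hypothesis. Condition (2) holds because \(s\) keeps at least one child and the subtrees \(W'_{t_j}\) satisfy (2). Finally, every ancestor map, and in particular \(q_s\), is constant on the retained terminal descendants of \(s\).

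For well-foundedness, transfinite induction on \(\rho_W\) is legitimate because \(W\) has no infinite branch, so \(\rho_W\) is a well-defined ordinal on all of \(W\). Applying the strengthened statement at \(s=r_W\), where the ancestor family is empty, yields the lemma.
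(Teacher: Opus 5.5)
Your proof is correct and essentially the paper's argument. Both use transfinite induction on $\rho_W$ with a strengthened hypothesis that carries the finitely many ancestor maps as a single map into a finite product (the paper's auxiliary map $a_s$), followed by pigeonhole over the infinitely many children. The only cosmetic difference is that you include $q_s$ in the family at $s$, whereas the paper's $a_s$ records only the strict nonroot ancestors and gets constancy of $q_s$ from the children's maps $a_t=(a_s,q_s)$.
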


\begin{proof}
We observe that every node has a terminal descendant; otherwise one could
choose successive immediate children indefinitely and obtain an infinite
branch.

For \(s\) with  depth \(k\), write its unique predecessor chain as
\[
 r_W=s_0\prec s_1\prec\cdots\prec s_k=s.
\]
Define the single auxiliary map
\[
 a_s(F)=
 \begin{cases}
  \bigl(q_{s_1}(F),\ldots,q_{s_{k-1}}(F)\bigr),& k>1,\\
  (),& k\leq1,
 \end{cases}
 \qquad F\in\Term(W_s),
\]
where \(()\) denotes the empty tuple.  Since \(Q_t\) is finite for every \(t\), the range of \(a_s\) is finite.

We prove by transfinite induction on the node rank \(\rho_W(s)\) that there
exists a prefix-closed pruning \(U_s\subset W_s\) satisfying \emph{(1)} and
\emph{(2)}, such that
\[
 a_s(F)=a_s(G)\qquad\bigl(\forall F,G\in\Term(U_s)\bigr),
\]
and such that, for every retained nonroot node \(u\in U_s\), the restriction
\[
 q_u\big|_{\{F\in\Term(U_s):u\preceq F\}}
\]
is constant.
If \(s\) is terminal, take \(U_s=\{s\}\); the assertion is then immediate.

Suppose that \(s\) is internal.  For every immediate child \(t\) of \(s\), the
induction hypothesis implies that there exists a prefix-closed pruning \(U_t\subset W_t\) such that \(a_t\) is constant on \(\Term(U_t)\), and \(q_u\) is constant on the terminal descendants of \(u\) in \(U_t\) for every nonroot node \(u\in U_t\). Since the values of \(a_t\) lie in the same finite product
\(\prod_{j=1}^kQ_{s_j}\) (the empty product at the root), there exists a tuple \(b\) such that
\[
 C_s:=\{t:t\text{ is an immediate child of }s
              \text{ and }a_t(F)=b\text{ for every }F\in\Term(U_t)\}
\]
is infinite.  Put
\[
 U_s=\{s\}\cup\bigcup_{t\in C_s}U_t.
\]
When \(s\neq r_W\), we have that 
\[
\Term(U_s)=\bigcup_{t\in C_s}\Term(U_t),
\]
and
\[
 a_t(F)=\bigl(a_s(F),q_s(F)\bigr)=b,
\]
for every \(t\in C_s\) and
\(F\in\Term(U_t)\). Then, we have that \(a_s\) and \(q_s\) are constant on \(\Term(U_s)\).  When \(s= r_W\),
\(a_s\) is the empty tuple and hence is constant.  Hence, \(U_s\) satisfies the
induction assertion.  By inducting on the node rank, we obtain the desired pruning. This completes the proof.
\end{proof}

We now apply the pruning lemma to the finite-trace case.  No hyperfiniteness assumption is
made on \(\M\).

\begin{theorem}\label{thm:finite-core}
Let \(\M\) be a von Neumann algebra equipped with a normal faithful
tracial state \(\tau\), that is, a normal faithful trace satisfying
\(\tau(1)=1\).  Suppose that
\begin{align}
 x_n\weakto0\ \text{in }L_1(\M),\qquad
 H:=\sup_n\|x_n\|_{L_1(\M,\tau)}<\infty,
 \label{eq:finite-hyp}
\end{align}
and, for some \(0<p<1\),
\begin{align}
 \delta_p:=\inf_n\|x_n\|_{L_p(\M,\tau)}>0.
 \label{eq:delta-p}
\end{align}
Then \((x_n)\) has a \(2\)-co-lacunary subsequence.
\end{theorem}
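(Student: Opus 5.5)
We argue by contradiction, using the tree of first failures described before \lemref{lem:reading}. Suppose no subsequence of \((x_n)\) is \(2\)-co-lacunary. Fix a small \(\gamma>0\), to be chosen later in terms of \(\delta_p\), \(H\), \(K_p\), \(C_{\mathrm{MT}}\) and \(p\). Consider the tree \(W\) of finite increasing index tuples \(s=(n_1<\dots<n_k)\). A tuple is declared terminal as soon as the sequence \((x_{n_1},\dots,x_{n_k})\) admits coefficients \((a_j)\) with \(\sum|a_j|^2=1\) and \(\|\sum a_jx_{n_j}\|_1<\gamma\). Equivalently, nodes are the tuples all of whose proper prefixes satisfy the lower estimate with constant \(\gamma\). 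If \(W\) had an infinite branch, that branch would be a \(2\)-co-lacunary subsequence. Hence \(W\) is well founded, and every internal node has infinitely many children (all \(n>n_k\)).

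The key step converts each terminal node into an approximate martingale difference structure. Since \(x_n\weakto0\) and \((x_n)\) is relatively weakly compact, \propref{prop:equi} gives uniform equiintegrability. Given a prefix, the weak nullity of later terms lets us choose them so that they are almost orthogonal to any fixed finite-dimensional data built from earlier terms. The plan is to use Mazur's lemma: along infinitely many children, replace \(x_{n_{k+1}}\) by convex combinations of far-out terms. These combinations are close in \(L_1\) to elements \(d_{k+1}\) satisfying \(\mathcal E_k d_{k+1}=0\), where \(\mathcal M_k\) is a finite-dimensional subalgebra generated by spectral projections of truncations of the earlier \(d_j\). Truncation at level \(R\) costs only \(\sup\tau(|x|\one_{(R,\infty)}(|x|))\), and the errors are made summable as in \lemref{lem:perturb}.

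The signs and the coefficient data \(q_t\) enter through \lemref{lem:reading}. For each nonroot node \(t\), let \(q_t(F)\) record a discretization, to a finite grid, of the failing coefficient attached to position \(t\) in the terminal node \(F\), together with its sign pattern. The pruning then yields a tree in which the coefficient at each node is independent of the terminal branch chosen. On a pruned branch \(F\) we have \(\|\sum a_jd_j\|_1\lesssim\gamma\), with the \(d_j\) forming a martingale difference sequence. \thmref{thm:MT} with \(\xi_{k-1}=\varepsilon_k\) scalars then gives \(\|\sum\varepsilon_ja_jd_j\|_{1,\infty}\le C_{\mathrm{MT}}\gamma\) for every choice of signs. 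By \eqref{eq:weak-embed}, the \(L_p\) quasi-norm is at most \((1-p)^{-1/p}C_{\mathrm{MT}}\gamma\).

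On the other hand, \lemref{lem:cotype} gives
\[\Bigl(\sum|a_j|^2\|d_j\|_p^2\Bigr)^{1/2}\le K_p(1-p)^{-1/p}C_{\mathrm{MT}}\gamma.\]
Each \(\|d_j\|_p\) stays close to \(\|x_{n_j}\|_p\ge\delta_p\), using \eqref{eq:p-triangle} and \eqref{eq:L1-Lp} to control the approximation errors. Since \(\sum|a_j|^2=1\), the left side is at least about \(\delta_p/2\), which is a contradiction for small \(\gamma\).

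The main obstacle is the middle step. We must arrange that the failing coefficients on a terminal branch, which are chosen before the Mazur averaging, remain consistent with the martingale approximation. Both the coefficients and the replaced vectors have to be produced along the same pruned branch. The plan is to build the martingale approximants node by node, with errors \(2^{-k}\gamma\), and then use the pruning to freeze the coefficient data.
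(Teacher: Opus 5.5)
There is a genuine gap, and it sits exactly in the step you flag as the main obstacle. You try to realize the martingale inside \(\M\) itself, with subalgebras \(\M_k\) generated by spectral projections of truncations of the earlier \(d_j\). You need these to be finite-dimensional so that weak nullity yields \(\|\mathcal E_kx_n\|_1\to0\). In a noncommutative algebra they are not finite-dimensional in general: two projections in general position already generate an infinite-dimensional algebra. Moreover, asking that successive terms be approximable to arbitrary precision inside a nested chain of finite-dimensional subalgebras is a hyperfiniteness-type local approximation property, which is unavailable e.g.\ in free group factors. This is essentially the hyperfinite argument the theorem is designed to bypass.

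If instead you allow infinite-dimensional \(\M_k\), you are forced to replace \(x_{n_{k+1}}\) by a Mazur average, and two things break.
First, the failure \(\|\sum_ja_jx_{n_j}\|_1<\gamma\) holds for single indices along one branch. An average over children mixes branches carrying different later indices, so ``\(\|\sum_ja_jd_j\|_1\lesssim\gamma\) on a pruned branch'' has no justification.
Second, the lower bound \(\|d_j\|_p\gtrsim\delta_p\) is lost, because nothing prevents such convex combinations from having small \(L_p\)-quasi-norm. Mazur applied to \((x_n)\) itself produces exactly such combinations.
Along a single branch there is no averaging at all, so nothing makes the terms martingale differences.

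The missing idea is to build the martingale in \(\N=L_\infty(\Omega)\overline\otimes\M\) rather than in \(\M\). After the pruning has frozen coefficients \(\alpha_t\), choose at every internal node \(s\) of depth \(k-1\) a convex combination \(\sum_{t=s^\frown n\in A_s}\pi_s(t)\alpha_tx_n\) of its children with \(L_1\)-norm \(<\eta_k\). By K\"onig's lemma the retained nodes form a finite tree. Let \(\Omega\) be its leaf set with weights \(\prod_j\pi_{F|j-1}(F|j)\), and let \(\N_k=L_\infty(\Omega,\mathcal F_k)\overline\otimes\M\) record the branch up to depth \(k\). On each leaf keep single terms, \(r_k(F)=\alpha_{F|k}x_{n_k}\).

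Two branchwise bounds then survive: \(\|\sum_kr_k(F)\|_1\le\gamma+H\vartheta\), with \(\vartheta\) the total discretization error, and \(\bigl(\sum_k\|r_k(F)\|_p^2\bigr)^{1/2}\ge\delta_p(1-\vartheta)\). Meanwhile \(m_k=\mathcal E_{k-1}r_k\) is exactly the Mazur average at the node \(F|k-1\), so \(\|m_k\|_1<\eta_k\). Hence \(d_k=r_k-m_k\) is an exact martingale difference sequence for a filtration of full subalgebras of \(\N\). The averaging takes place only in the commutative tree variable, so no finite-dimensional approximation of \(\M\) is needed.

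Your endgame then closes the argument. Apply cotype pointwise in \(F\) and integrate, and pass from \(r_k\) to \(d_k\) with the \(p\)-triangle inequality. Finally, bound \(\sum_k\varepsilon_kd_k\) by the martingale transform inequality and the weak-\(L_1\) embedding into \(L_p\).
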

\setcounter{theorem}{2}

\begin{proof}
{\bf{Step 1}} We first construct the tree of first bad extensions.

For every nonempty finite \(F\subset\mathbb N\), set
\begin{align*}
  \lambda(F)=
  \min_{\substack{(a_i)_{i\in F}\\
  \sum_{i\in F}|a_i|^2=1}}
  \left\|\sum_{i\in F}a_ix_i\right\|_{L_1(\M,\tau)},
  \qquad \lambda(\varnothing)=+\infty.
\end{align*}
Since \(F\) is a finite set, the minimum exists. We regard finite
strictly increasing sequences as nodes.  A node
\(s=(n_1,\ldots,n_j)\) precedes \(t=(m_1,\ldots,m_k)\) when
\(j\leq k\) and \(n_i=m_i\) for every \(1\leq i\leq j\); in other words,
\(s\) is an initial segment of \(t\).  The empty sequence is the root.
Fix \(c>0\), and let
\begin{align*}
 T_c=
 \{(n_1<\cdots<n_k):
   \lambda(\{n_1,\ldots,n_k\})\geq c\}
 \cup\{\varnothing\}.
\end{align*}
Since \(\lambda(F)\geq\lambda(G)\) whenever \(F\subset G\),
\(T_c\) is closed under initial segments and is therefore a rooted tree.
We call its nodes \(c\)-good.

Assume that \((x_n)\) has no \(2\)-co-lacunary subsequence. Then \(T_c\) is well founded for every \(c>0\). Otherwise, \(T_c\) would contain an infinite branch, every finite subset of which satisfies \(\lambda\geq c\). It would then follow that this branch is \(2\)-co-lacunary, contradicting our assumption.

For a nonempty good node \(s=(n_1,\ldots,n_k)\), write
\(\max s=n_k\) and \(s^\frown n=(n_1,\ldots,n_k,n)\).  A
\emph{first-bad node} is a non-good node whose immediate predecessor is
good.
Complete \(T_c\)
to its first-bad terminal tree \(\widehat T_c\): every good node \(s\)
has all children \(s^\frown n\), \(n>\max s\); at the root, every
\(n\in\mathbb N\) is allowed as an immediate child. We observe that \(\widehat T_c\) is still well
founded. Indeed, every good node of \(\widehat T_c\) belongs to \(T_c\), whereas each newly adjoined node is a first-bad node and is therefore terminal.

We observe that every internal node has
infinitely many immediate children, and every terminal leaf
\(F=(n_1,\ldots,n_m)\) satisfies
\(\lambda(\{n_1,\ldots,n_m\})<c\). Then, for each terminal \(F\), choose
\begin{align}
 b^F=(b_1^F,\ldots,b_m^F),\qquad
 \|b^F\|_{L_2(\M,\tau)}=1,\qquad
 \left\|\sum_{k=1}^m b_k^Fx_{n_k}\right\|_{L_1(\M,\tau)}<c.
 \label{eq:bad-vector}
\end{align}
Fix \(0<\vartheta<1/2\)
 to be specified at the end of the proof, 
and
choose positive numbers \((\vartheta_k)\) with
\begin{align}
  \sum_{k\geq1}\vartheta_k\leq\vartheta.
  \label{eq:vartheta-sum}
\end{align}
Let \(\mathbb D:=\{z\in\mathbb C:|z|\leq1\}\) be the closed complex
unit disk.  For each \(k\), choose a finite disjoint partition
\(\mathbb D=\bigsqcup_{j=1}^{N_k}C_{k,j}\) such that every cell
\(C_{k,j}\) has diameter at most
\(\vartheta_k\). Let \(t\) be a nonroot node of \(\widehat T_c\) at depth \(k\geq1\), and let \(F\) be a terminal descendant of \(t\). Since \(F\) has depth at least \(k\) and \(\|b^F\|_{L_2(\M,\tau)}=1\), the coefficient \(b_k^F\) is well defined and belongs to \(\mathbb D\). Define
\[
 q_t(F)=j\quad\text{when }b_k^F\in C_{k,j}.
\]
We observe that \(q_t(F)\in \{1,2,\ldots,N_k\}\), which is a finite set.  
Applying \lemref{lem:reading} to \(\widehat T_c\) and the maps
\(q_t\) gives a prefix-closed, infinitely splitting, well-founded
pruning \(W\subset\widehat T_c\) satisfying the lemma's three
conditions.  In particular, for each retained nonroot node \(t\) of
depth \(k\geq1\), \(q_t\) is constant on its retained terminal
descendants. Then, choose one such terminal descendant \(F_t\) (take
\(F_t=t\) if \(t\) is terminal) and define \(\alpha_t=b_k^{F_t}\). Since \(q_t(F)=q_t(F_t)\) and \(\|b^F\|_2=1\) for every retained terminal descendant \(F\) of \(t\), we have that
\begin{align}
 |\alpha_t|\leq1,\qquad
 |\alpha_t-b_k^F|\leq\vartheta_k.
 \label{eq:alpha-reading}
\end{align}
For every terminal branch \(F\), write
\(\alpha^F=(\alpha_{F|1},\ldots,\alpha_{F|m})\),
where \(F|k=(n_1,\ldots,n_k)\) is the prefix of \(F\) of length \(k\).
Then
\begin{align}
    \|\alpha^F-b^F\|_{L_1(\M,\tau)}\stackrel{\eqref{eq:vartheta-sum},\eqref{eq:alpha-reading}}{\le} \vartheta.
    \label{eq:alpha-norm}
\end{align}
Since $\|z\|_{L_2(\M,\tau)}\leq \|z\|_{L_1(\M,\tau)}$ for every finite vector $z$ \rev{(see \cite[Section~11.1]{Brezis2011})}, it follows from \eqref{eq:alpha-norm} that
\[
    \|\alpha^F-b^F\|_{L_2(\M,\tau)}
    \leq \|\alpha^F-b^F\|_{L_1(\M,\tau)}
    \leq \vartheta.
\]
Consequently, using $\|b^F\|_{L_2(\M,\tau)}=1$ and the triangle inequality, we obtain
\begin{align}
    \|\alpha^F\|_{L_2(\M,\tau)}
    \geq \|b^F\|_{L_2(\M,\tau)}-\|\alpha^F-b^F\|_{L_2(\M,\tau)}
    \geq 1-\vartheta.\label{eq:alpha-norm-2}
\end{align}


{\bf{Step 2}} We next apply Mazur's lemma to construct a martingale
difference sequence.

Fix positive numbers \((\eta_k)\), to be chosen at the end.  Since \((\alpha_t)\) is bounded and \(x_n\weakto0\), for every internal node \(s\in W\) of depth \(k-1\),
its children \(t=s^\frown n\) satisfy
\begin{align*}
 \alpha_t x_n=\alpha_{s^\frown n}x_n\weakto0\quad\text{as the child index }n\to\infty.
\end{align*}
Applying Mazur's lemma \cite[Corollary~3.8]{Brezis2011} to the weakly
null sequence \(\alpha_{s^\frown n}x_n\), there exist finitely many indices
\(A_s=\{t_1,\ldots,t_r\}\) and positive convex weights
\(\pi_s(t_1),\ldots,\pi_s(t_r)\) where \(t_i=s^\frown n_i\) are immediate children, such that
\begin{align}
 \sum_{t\in A_s}\pi_s(t)=1,\qquad
 \left\|
   \sum_{\substack{t=s^\frown n\\t\in A_s}}
   \pi_s(t)\alpha_tx_n
 \right\|_{L_1(\M,\tau)}<\eta_k.
 \label{eq:Mazur-node}
\end{align}
Then, construct a subtree $P\subset W$ recursively as follows. Include the root of
$W$, and for each internal node $s\in P$, retain precisely the children in
$A_s$, continuing the construction only from those that are internal in $W$.
Thus $P$ is finitely branching and well founded.
By K\"onig's lemma \cite[Lemma~8.1.2]{DiestelGraph2017},
$P$ is finite. Moreover, \(\Term(P)\subset\Term(W)\).

Let \(\Omega=\Term(P)\), the finite set of terminal leaves.  For a
leaf \(F=(n_1,\ldots,n_m)\), write \(F|0=\varnothing\) and let \(F|j\) \((j\le m)\)
be its prefix of length \(j\). Assign \(F\) the probability
\[
 \mathbb P(\{F\})=\prod_{j=1}^{m}\pi_{F|j-1}(F|j).
\]
By \eqref{eq:Mazur-node}, these leaf probabilities
sum to one.  Since \(P\) is finite, let \(K\) be its maximum leaf depth.
For \(0\leq k\leq K\) and a leaf \(F\) of depth \(m\), define \(Z_k(F)=F|\min(k,m)\), and put \(\mathcal F_k=\sigma(Z_k)\), where
 \(\sigma(Z_k)=\{Z_k^{-1}(B):B\subseteq Z_k(\Omega)\}\).  Thus
\(\mathcal F_k\) records the branch up to step \(k\). 
Set
\begin{align*}
 \N&=L_\infty(\Omega)\,\overline\otimes\,\M,
 &\widetilde\tau&=\E_\Omega\otimes\tau,\\
 \N_k&=L_\infty(\Omega,\mathcal F_k)\,\overline\otimes\,\M,
 &\widetilde\tau_k&=\widetilde\tau|_{\N_k}.
\end{align*}
Here
\[
 \E_\Omega f=\sum_{F\in\Omega}\mathbb P(\{F\})f(F)
 \qquad(f\in L_\infty(\Omega)),
\]
and the trace-preserving conditional expectation onto \(\N_k\) is
\[
 \mathcal E_k
 =\E_\Omega[\,\cdot\mid\mathcal F_k]
   \otimes\operatorname{id}_{\M}
 :\N\longrightarrow\N_k.
\]
For \(0<p<\infty\), the norm on \(L_p(\N,\widetilde\tau)\) is defined as 
\begin{equation}\label{eq:finite-product-Lp}
   \|z\|_p^p
  =\sum_{F\in\Omega}
   \mathbb P(\{F\})\,
   \|z(F)\|_{L_p(\M,\tau)}^p.\qquad (z\in L_p(\N,\widetilde\tau)).
\end{equation}
For \(F=(n_1,\ldots,n_m)\), define
\begin{align*}
 r_k(F)=
 \begin{cases}
 \alpha_{F|k}x_{n_k},&k\leq m,\\
 0,&k>m,
 \end{cases}
 \qquad
 m_k=\mathcal E_{k-1}r_k,\qquad d_k=r_k-m_k.
\end{align*}
We observe that \(r_k,d_k\in L_1(\N_k)\), \(m_k\in L_1(\N_{k-1})\),
\(\mathcal E_{k-1}d_k=0\), and \((d_k)_{k=1}^K\) is a martingale difference
 sequence. Fix \(F\in\Omega\) and put \(s=Z_{k-1}(F)\).
 If \(s\) is terminal, then \(k>m\) and \(r_k(F)=0\), so 
 \[m_k(F)=0.\]  
 If \(s\) is internal, we have that
 \[
  \|m_k(F)\|_{L_1(\M,\tau)}
  =\left\|\sum_{\substack{t=s^\frown n\\t\in A_s}}
       \pi_s(t)\alpha_t x_n\right\|_{L_1(\M,\tau)}\stackrel{\eqref{eq:Mazur-node}}{<}\eta_k.
 \]
Hence,
\begin{align}
  \|m_k\|_1\stackrel{\eqref{eq:finite-product-Lp}}{\le}\eta_k.
  \label{eq:m-small}
\end{align}
Put \(R=\sum_{k=1}^Kr_k\) and \(D=\sum_{k=1}^Kd_k\).
For a leaf \(F=(n_1,\ldots,n_m)\), we have that
\begin{align}
 \|R(F)\|_{L_1(\M,\tau)}
 \stackrel{\eqref{eq:bad-vector},\eqref{eq:alpha-reading}}{\le} c+\sum_{k=1}^m
 |\alpha_{F|k}-b_k^F|\,\|x_{n_k}\|_{L_1(\M,\tau)}
 \stackrel{\eqref{eq:finite-hyp}}{\le} c+H\vartheta.\label{eq:R-small}
\end{align}
Consequently,
\begin{align}
 \|D\|_1
 \leq \|R\|_1+\sum_{k=1}^K\|m_k\|_1
 \stackrel{\eqref{eq:m-small},\eqref{eq:R-small}}{\le} c+H\vartheta+\sum_{k\geq1}\eta_k.
 \label{eq:D-small}
\end{align}

{\bf{Step 3}} We finally obtain incompatible lower and upper
\(L_p\)-norm estimates.

Fixing a leaf \(F\), we have that
\begin{align}
 \left(\sum_k\|r_k(F)\|_{L_p(\M,\tau)}^2\right)^{1/2}
 \stackrel{\eqref{eq:delta-p}}{\ge}\delta_p\|\alpha^F\|_2
 \stackrel{\eqref{eq:alpha-norm-2}}{\ge}\delta_p(1-\vartheta).
 \label{eq:path-lower}
\end{align}
Together with \eqref{eq:cotype} in Lemma \ref{lem:cotype}, we have that
\begin{align}\label{eq:r-random-pointwise}
\E_\varepsilon
 \left\|\sum_k\varepsilon_kr_k(F)\right\|_{L_p(\M,\tau)}^p \stackrel{\eqref{eq:cotype}}{\ge}K_p^{-p}\left(\sum_k\|r_k(F)\|_{L_p(\M,\tau)}^2\right)^{1/2}
 \stackrel{\eqref{eq:path-lower}}{\geq} K_p^{-p}\delta_p^p(1-\vartheta)^p.
\end{align}
Integrating \eqref{eq:r-random-pointwise} over \(F\in\Omega\), and
using Fubini's theorem, yields
\begin{align}
 \E_\varepsilon
\left\|\sum_k\varepsilon_kr_k\right\|_p^p
 \geq K_p^{-p}\delta_p^p(1-\vartheta)^p.
 \label{eq:r-random-lower}
\end{align}
Since \(\widetilde\tau(\one)=1\), it follows from \eqref{eq:p-triangle},
\eqref{eq:L1-Lp},  and
\eqref{eq:m-small} that
\begin{align}
\begin{aligned}
 \E_\varepsilon
 \left\|\sum_k\varepsilon_kd_k\right\|_p^p
 &\quad\overset{\mathclap{\eqref{eq:p-triangle}}}{\ge}\quad
 \E_\varepsilon
 \left\|\sum_k\varepsilon_kr_k\right\|_p^p
 -\sum_k\|m_k\|_p^p\\
 &\quad\overset{\mathclap{\eqref{eq:L1-Lp}}}{\ge}\quad\E_\varepsilon
 \left\|\sum_k\varepsilon_kr_k\right\|_p^p
 -\sum_k\|m_k\|_p^p\\
 &\quad\overset{\mathclap{\eqref{eq:m-small},\eqref{eq:r-random-lower}}}{\ge}\quad
 K_p^{-p}\delta_p^p(1-\vartheta)^p-\sum_k\eta_k^p.
 \end{aligned}
 \label{eq:d-random-lower}
\end{align}

Let
\[
 X_j=\sum_{k\leq j}d_k=\E_jD.
\]
Since 
\[
 \left\|\sum_{k=1}^K\varepsilon_kd_k\right\|_{1,\infty}
 =
 \left\|d_1+\sum_{k=2}^K
   (\varepsilon_1\varepsilon_k)d_k\right\|_{1,\infty},
\]
it follows from \thmref{thm:MT} that
\begin{align}
 \left\|\sum_k\varepsilon_kd_k\right\|_{1,\infty}
 \leq C_{\mathrm{MT}}\|D\|_1.
 \label{eq:sign-transform}
\end{align}
Combining \eqref{eq:weak-embed}, \eqref{eq:D-small}, and
\eqref{eq:sign-transform}, we have that
\begin{align*}
 \E_\varepsilon
 \left\|\sum_k\varepsilon_kd_k\right\|_p^p
 \leq
 \frac{C_{\mathrm{MT}}^p}{1-p}
 \left(c+H\vartheta+\sum_k\eta_k\right)^p.
\end{align*}
Together with \eqref{eq:d-random-lower}, we have that
\begin{align}
 K_p^{-p}\delta_p^p(1-\vartheta)^p-\sum_k\eta_k^p
 \leq
 \frac{C_{\mathrm{MT}}^p}{1-p}
 \left(c+H\vartheta+\sum_k\eta_k\right)^p.
 \label{eq:master-contradiction}
\end{align}
Set
\[
    A=K_p^{-p}\delta_p^p,
    \qquad
    B=\frac{C_{\mathrm{MT}}^p}{1-p}.
\]
Choose $\vartheta\in(0,1/2)$ sufficiently
small so that
\begin{align}\label{choose A,B}
    A(1-\vartheta)^p>\frac{3A}{4},
    \qquad
    B(H\vartheta)^p<\frac{A}{16}.
\end{align}
Put \(\eta_k=t2^{-k/p}\), and then take \(t>0\) sufficiently small so that
\begin{align}\label{choose eta}
    \sum_k\eta_k^p<\frac{A}{4},
    \qquad
    B\left(\sum_k\eta_k\right)^p<\frac{A}{16}.
\end{align}
Since the preceding construction is available for every \(c>0\), choose
\(c\) sufficiently small so that
 \begin{align}\label{choose c}
     Bc^p<A/16.
 \end{align}
Then it follows from the inequality
\[
    (u+v+w)^p\leq u^p+v^p+w^p
    \qquad (u,v,w\geq0),
\]
that
\begin{align*}
\frac{A}{2}
    &\stackrel{\eqref{choose A,B},\eqref{choose eta}}{\le}A(1-\vartheta)^p-\sum_k\eta_k^p
    \stackrel{\eqref{eq:master-contradiction}}{\leq} B\left(c+H\vartheta+\sum_k\eta_k\right)^p
    \\&\quad\leq Bc^p+B(H\vartheta)^p
          +B\left(\sum_k\eta_k\right)^p
    \stackrel{\eqref{choose A,B},\eqref{choose eta},\eqref{choose c}}{\le}\frac{3A}{16},
\end{align*}
which is a contradiction. Hence, the chosen \(T_c\) is not well founded and its infinite \(c\)-good
branch is a \(2\)-co-lacunary subsequence.
\end{proof}

The preceding theorem leads to a corollary for seminormalized weakly
null sequences in finite-trace \(L_1\)-spaces.



\begin{corollary}\label{cor:finite-weak-null}
Let \(\M\) carry a normal faithful finite trace \(\tau\).
 Every
seminormalized weakly null sequence in \(L_1(\M,\tau)\) has a
\(2\)-co-lacunary subsequence.
\end{corollary}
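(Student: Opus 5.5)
The plan is to reduce to \thmref{thm:finite-core}. That theorem needs three things: a tracial state, weak nullity together with $L_1$-boundedness, and a uniform positive lower bound on the $L_p$-quasi-norms for some $0<p<1$. Normalizing the trace is harmless, and the real content is to extract the lower $L_p$-bound from seminormalization together with weak nullity.

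\emph{Normalization.} If $\M=\{0\}$ there are no seminormalized sequences, so assume $\tau(\one)>0$ and put $\tau'=\tau/\tau(\one)$. Then $L_1(\M,\tau')$ and $L_1(\M,\tau)$ consist of the same operators, with
\[
\|x\|_{L_1(\M,\tau')}=\tau(\one)^{-1}\|x\|_{L_1(\M,\tau)}.
\]
So the two norms are proportional and the identity map is an isomorphism. Consequently weak nullity, seminormalization and $2$-co-lacunarity (the latter up to a change of $\gamma$) are all unaffected. We may therefore assume $\tau(\one)=1$.

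\emph{Lower $L_p$-bound.} Fix $p=1/2$, say. Let $(x_n)$ be seminormalized and weakly null, and set $\delta=\inf_n\|x_n\|_1>0$.
\begin{itemize}
\item The set $\{x_n\}$ is relatively weakly compact. Indeed, its weak closure is $\{x_n\}\cup\{0\}$, which is weakly compact because $x_n\weakto0$.
\item By \propref{prop:equi}, we can choose $R>0$ with
\[
\sup_n\tau\bigl(|x_n|\one_{(R,\infty)}(|x_n|)\bigr)<\delta/2.
\]
\item On the complementary spectral part $|x_n|\le R$, functional calculus gives $|x_n|\one_{[0,R]}(|x_n|)\le R^{1-p}|x_n|^p$. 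Hence
\[
\delta\le\|x_n\|_1\le \delta/2+R^{1-p}\tau(|x_n|^p)=\delta/2+R^{1-p}\|x_n\|_p^p .
\]
\end{itemize}
It follows that $\inf_n\|x_n\|_p\ge(\delta R^{p-1}/2)^{1/p}>0$. Moreover $\sup_n\|x_n\|_1<\infty$ holds by seminormalization.

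\emph{Conclusion.} All hypotheses \eqref{eq:finite-hyp} and \eqref{eq:delta-p} of \thmref{thm:finite-core} now hold, for the whole sequence and with no passage to a subsequence needed. The theorem then yields a $2$-co-lacunary subsequence.

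The only substantive step is the $L_p$-bound. Its crux is that weak nullity forces uniform integrability through \propref{prop:equi}, and this is exactly what prevents the $L_1$-mass from escaping into large spectral values while the $L_p$-quasi-norm tends to zero. The rest is bookkeeping about the normalization of the trace.
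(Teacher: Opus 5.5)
Your proposal is correct and follows essentially the same route as the paper. Both arguments use the relative weak compactness of $\{x_n\}$, then \propref{prop:equi} to cut off the spectral part of $|x_n|$ above $R$, then the bound $\lambda\le R^{1-p}\lambda^p$ on $[0,R]$ to obtain a uniform lower $L_p$-bound, and finally \thmref{thm:finite-core} applied with the normalized trace. The only differences are cosmetic: you normalize the trace first where the paper does it last, and you truncate with $|x|\one_{[0,R]}(|x|)$ where the paper uses $|x|\wedge R$.
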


\begin{proof}
Let \(K=\{x_n:n\geq1\}\).
Since \(x_n\weakto0\), the set
\(\{0\}\cup\{x_n:n\geq1\}\) is weakly compact.  Hence, \(K\) is relatively weakly compact and
\begin{align}
  \inf_{x\in K}\|x\|_1\geq a>0.
  \label{eq:K-seminorm}
\end{align}
We first prove that for every \(0<p<1\),
\begin{align}
  \inf_{x\in K}\|x\|_p>0.
  \label{eq:K-Lp-lower}
\end{align}
By \eqref{eq:equi-spectral} in Proposition \ref{prop:equi}, there exists \(0<R<\infty\) such that
\begin{align}
 \sup_{x\in K}
 \tau\!\left(|x|\one_{(R,\infty)}(|x|)\right)<a/2.\label{eq:K-spectral}
\end{align}
For \(z_x=|x|\wedge R\), it follows from the triangle inequality that
\[
\tau(z_x)\ge \tau(|x|)-\tau(|x|\one_{(R,\infty)}(|x|))\stackrel{\eqref{eq:K-seminorm},\eqref{eq:K-spectral}}{>} a-a/2=a/2.
\]
Since \(0\leq z_x\leq R\) and \(p-1<0\),
\[
 z_x^p\geq R^{p-1}z_x.
\]
Hence, the above two inequalities yield
\[
 \|x\|_p^p\geq\tau(z_x^p)
 \geq R^{p-1}a/2,
\]
for all \(x\in K\). Hence, \eqref{eq:K-Lp-lower} holds.

Then, let \(\tau'=\tau/\tau(\one)\) be a normal faithful
tracial state on \(\M\). Applying \thmref{thm:finite-core} to the weakly null sequence \((x_n)\) in \(L_1(\M,\tau')\) gives a \(2\)-co-lacunary subsequence of \((x_n)\) in \(L_1(\M,\tau')\), which is also \(2\)-co-lacunary in \(L_1(\M,\tau)\).
\end{proof}

We next consider the case where \(\M\) is \(\sigma\)-finite.
The following lemma provides a uniform approximation
of biequiintegrable sets, which will allow us to reduce
the problem to the finite-trace case.

\begin{lemma}
\label{lem:uniform-symmetric-factorization}
Let \(\M\) be a \rev{\(\sigma\)-finite von Neumann algebra}, let
\(\varphi\) be a normal faithful
state with Haagerup density \(D\), and let
\(K\subset L_1(\M)\) be bounded and biequiintegrable.  For every
\(\varepsilon>0\) there is \(C_\varepsilon<\infty\) such that, for every
\(x\in K\), one can find \(m_x\in\M\) satisfying
\begin{align*}
 \|m_x\|_\infty\leq C_\varepsilon,
\qquad
 \|x-D^{1/2}m_xD^{1/2}\|_1<\varepsilon.
\end{align*}
\end{lemma}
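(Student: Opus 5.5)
The plan is to combine the Raynaud--Xu factorization of \lemref{lem:equi-approx} with a smoothing argument along the modular group \(\sigma^\varphi\). This will convert one-sided factors \(aD\) and \(Db\) into symmetric factors \(D^{1/2}mD^{1/2}\) with control on \(\|m\|_\infty\).

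\textbf{Step 1 (one-sided factorization).} Since \(\M\) is \(\sigma\)-finite and \(K\) is biequiintegrable, \lemref{lem:equi-approx}(ii) applies with \(p=1\) and \(h_\varepsilon=C_\varepsilon D\). So for every \(x\in K\) there are \(a_x,b_x\) in the unit ball of \(\M\) with
\[
  \|x-C_\varepsilon(a_xD+Db_x)\|_1<\varepsilon/3 .
\]
It therefore suffices to approximate \(aD\), uniformly over \(\|a\|_\infty\le1\), by elements \(D^{1/2}mD^{1/2}\) with \(\|m\|_\infty\) bounded by a constant depending only on the tolerance. The term \(Db\) is handled symmetrically by taking adjoints, since \((Db)^*=b^*D\).

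\textbf{Step 2 (analytic regularization).} For \(r>0\) and \(\|a\|_\infty\le1\), I would set
\[
  a_r=\sqrt{r/\pi}\int_{\mathbb R}e^{-rt^2}\sigma^\varphi_t(a)\,dt .
\]
This element is entire analytic for \(\sigma^\varphi\) and satisfies \(\|\sigma^\varphi_{-i/2}(a_r)\|_\infty\le C(r)\) with \(C(r)=e^{1/(4r)}\), from the Gaussian formula. The standard identity \(\sigma^\varphi_t(a)=D^{it}aD^{-it}\) in the Haagerup picture gives, for analytic \(a_r\),
\[
  a_rD=D^{1/2}\sigma^\varphi_{-i/2}(a_r)D^{1/2}.
\]
The candidate is therefore \(m=C_\varepsilon\bigl(\sigma^\varphi_{-i/2}(a_r)+\sigma^\varphi_{i/2}(b_r)\bigr)\), whose norm is bounded independently of \(x\).

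\textbf{Step 3 (uniform error, the main obstacle).} It remains to show that \(\sup_{\|a\|\le1}\|aD-a_rD\|_1\to0\) as \(r\to\infty\). By Minkowski's inequality for the Gaussian average, this reduces to proving \(\sup_{\|a\|\le1}\|\sigma_t(a)D-aD\|_1\to0\) as \(t\to0\). I would write
\[
  \sigma_t(a)D-aD=(D^{it}-1)\,aD^{1-it}+aD^{1/2}\bigl(D^{1/2-it}-D^{1/2}\bigr).
\]
The second term is at most \(\|D^{1/2}\|_2\,\|(D^{-it}-1)D^{1/2}\|_2\), which tends to \(0\) by dominated convergence because \(\operatorname{tr}_\M(D)=1\). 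The first term has \(D^{it}-1\) on the wrong side of \(a\), so it is not immediately uniform in \(a\). This is the step I expect to be hardest.

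To handle it I would first use left equiintegrability via \lemref{lem:equi-approx}(i) at level \(p=2\). The aim is to show that \(x\) can be approximated by \(D^{1/2}y\) with \(y\) ranging in a bounded subset of \(L_2(\M)\), which moves a \(D^{1/2}\) to the left of \(a\). The first term then becomes \((D^{it}-1)D^{1/2}\cdot(\text{bounded in }L_2)\), and Hölder's inequality gives uniform smallness. Concretely, I would replace Step 1 by the following: left equiintegrability yields \(x\approx D^{1/2}y_x\) with \(\|y_x\|_2\) bounded; right equiintegrability applied to the bounded set \(\{y_x\}\) yields \(y_x\approx m'_xD^{1/2}\), by an approximation in the style of \lemref{lem:equi-approx}(i) in \(L_2\). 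After this replacement, each commutation error carries a factor \(\|(D^{\pm it}-1)D^{1/2}\|_2\), which is independent of \(a\). Finally, choosing \(r\) large and collecting the three \(\varepsilon/3\) errors gives \(\|x-D^{1/2}m_xD^{1/2}\|_1<\varepsilon\) with \(\|m_x\|_\infty\le C_\varepsilon\), where \(C_\varepsilon\) is enlarged by the factor \(e^{1/(4r)}\).
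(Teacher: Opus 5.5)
Your proposal has a genuine gap. It sits exactly at the step you flagged as hardest, and the repair you sketch does not close it.

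First, the reduction at the end of Step 1 asks for something false. You want, for each tolerance \(\varepsilon\), a constant \(C(\varepsilon)\) such that every \(aD\) with \(\|a\|_\infty\le1\) lies within \(\varepsilon\) of the set \(\{D^{1/2}mD^{1/2}:\|m\|_\infty\le C(\varepsilon)\}\). That set is left equiintegrable, since \(\|eD^{1/2}mD^{1/2}\|_1\le C\|eD^{1/2}\|_2=C\varphi(e)^{1/2}\). A set that lies within every \(\varepsilon\) of such sets is itself left equiintegrable. But \(\{aD:\|a\|_\infty\le1\}\) is not left equiintegrable in general.

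Concretely, take \(\M=B(\ell_2)\), \(D=\operatorname{diag}(\lambda_k)\) and \(a=e_{j1}\). Your smoothing gives \(a_r=e^{-(\log(\lambda_j/\lambda_1))^2/(4r)}a\), so \(\|aD-a_rD\|_1\to\lambda_1\) as \(j\to\infty\) for every fixed \(r\). In fact, for this \(\M\) the conclusion of the lemma fails for \(K=\{aD:\|a\|_\infty\le1\}\), which is equiintegrable in the two-sided sense of \lemref{lem:equi-approx}(ii). So passing to (ii) throws away precisely the one-sided information the proof needs.

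Two smaller points. The Gaussian bound is \(\|\sigma_{\pm i/2}(a_r)\|_\infty\le e^{r/4}\), not \(e^{1/(4r)}\). Also, for non-semifinite \(\M\) the unitaries \(D^{it}\) lie in \(\Core\), not in \(\M\). Hence the individual terms of your splitting, and expressions such as \((D^{-it}-1)D^{1/2}\), are not elements of \(L_1(\M)\) or \(L_2(\M)\) and have no Haagerup norm.

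Second, your repair rests on the claim that \(\{y_x\}\) is right \(2\)-equiintegrable, and that is exactly what needs proof. Biequiintegrability is an \(L_1\)-property of \(x\); it does not pass to an arbitrary \(L_2\)-factor. You do not say which \(L_2\)-family you would feed into \lemref{lem:equi-approx}(i) at \(p=2\). For the natural choice \(y_x=C_\eta D^{1/2}a_x\), obtained from \lemref{lem:equi-approx}(i) at \(p=1\), the best available estimate is
\[
\|y_xe\|_2^2\le C_\eta\|C_\eta Da_xe\|_1\le C_\eta(\|xe\|_1+\eta).
\]
This only bounds \(\limsup_\alpha\sup_x\|y_xe_\alpha\|_2^2\) by \(C_\eta\eta\), which need not be small. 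Note also that if you had \(y_x\approx m_x'D^{1/2}\), you would already be finished, so Steps 2--3 would be unnecessary.

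The missing idea, which is the paper's proof, is to factor \(x\) so that each factor inherits one-sided equiintegrability directly. Write \(x=u_x|x|\) and put \(a_x=u_x|x|^{1/2}\), \(b_x=|x|^{1/2}\). Then \(x=a_xb_x\), \(\|a_x\|_2^2=\|b_x\|_2^2=\|x\|_1\), and
\[
 \|ea_x\|_2^2=\operatorname{tr}_{\M}(exu_x^*)\le\|ex\|_1,\qquad
 \|b_xe\|_2^2=\operatorname{tr}_{\M}(u_x^*xe)\le\|xe\|_1 .
\]
So \(\{a_x\}\) is left and \(\{b_x\}\) is right \(2\)-equiintegrable. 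Applying \lemref{lem:equi-approx}(i) with \(p=2\) and \(h_0=D^{1/2}\) gives uniformly bounded \(v_x,w_x\in\M\) with \(\|a_x-D^{1/2}v_x\|_2<\eta\) and \(\|b_x-w_xD^{1/2}\|_2<\eta\). Then \(m_x=v_xw_x\) works by H\"older's inequality.

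In your notation, \(y_x=v_xb_x\) is a legitimate right \(2\)-equiintegrable choice, since \(\|v_xb_xe\|_2\le\|v_x\|_\infty\|xe\|_1^{1/2}\). No modular smoothing is needed.
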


\begin{proof}
Put \(H=\max\{1,\sup_{x\in K}\|x\|_1\}\).
For each \(x\in K\), write \(x=u_x|x|\) and set
\[
  a_x=u_x|x|^{1/2},\qquad b_x=|x|^{1/2}.
\]
Then \(x=a_xb_x\), and
\begin{align}
  \|a_x\|_2^2=\|b_x\|_2^2
  =\|x\|_1\le H.
  \label{eq:symmetric-L2-bound}
\end{align}
For every projection \(e\in\M\), the polar
decomposition and the tracial pairing yield
\begin{align}
  \|ea_x\|_2^2
  &=\operatorname{tr}_{\M}(exu_x^*)
  \le\|ex\|_1, \notag\\
  \|b_xe\|_2^2
  &=\operatorname{tr}_{\M}(u_x^*xe)
  \le\|xe\|_1.
  \label{eq:symmetric-L2-equi}
\end{align}
By \eqref{eq:symmetric-L2-equi} and the
biequiintegrability of \(K\), the families
\(\{a_x:x\in K\}\) and \(\{b_x:x\in K\}\)
are respectively left and right
\(2\)-equiintegrable.

Fix \(\varepsilon>0\), and choose \(\eta>0\)
such that
\begin{align}\label{choose eta < varepsilon}
  \eta(2\sqrt H+\eta)<\varepsilon.
\end{align}
Since \(\varphi\) is faithful, \(D^{1/2}\)
has full support. 
Applying \lemref{lem:equi-approx}(i) with \(p=2\) and
\(h_0=D^{1/2}\) to \(\{a_x:x\in K\}\) and \(\{b_x:x\in K\}\),
we obtain constants \(C_\eta^{(1)},C_\eta^{(2)}>0\) such that,
for every \(x\in K\), there exist
\(v_x,w_x\in\M\) satisfying
\begin{align}
  \|v_x\|_\infty\le C_\eta^{(1)},\|w_x\|_\infty&\le C_\eta^{(2)}
 \label{eq:symmetric-L2-approx}\\
  \|a_x-D^{1/2}v_x\|_2
  &<\eta, \notag\\
  \|b_x-w_xD^{1/2}\|_2
  &<\eta.\notag
\end{align}
Then it follows from triangle inequality that
\begin{align}
  \|D^{1/2}v_x\|_2
  \le\|a_x\|_2
    +\|a_x-D^{1/2}v_x\|_2
  \stackrel{\eqref{eq:symmetric-L2-bound},
  \eqref{eq:symmetric-L2-approx}}{<}
    \sqrt H+\eta.
  \label{eq:symmetric-L2-middle}
\end{align}
Set \(m_x=v_xw_x\). By H\"older's inequality
\cite[Section~1.2]{HaagerupJungeXu2010},
we have
\begin{eqnarray*}
  \|x-D^{1/2}m_xD^{1/2}\|_1&\le&
    \|(a_x-D^{1/2}v_x)b_x\|_1
    +\|D^{1/2}v_x(b_x-w_xD^{1/2})\|_1
    \\
  &\le&
    \|a_x-D^{1/2}v_x\|_2\|b_x\|_2
    +\|D^{1/2}v_x\|_2
       \|b_x-w_xD^{1/2}\|_2\notag
       \\&\stackrel{\eqref{eq:symmetric-L2-approx}}{<}&
       \eta(\|b_x\|_2+\|D^{1/2}v_x\|_2)\notag
\stackrel{
    \eqref{eq:symmetric-L2-bound},
    \eqref{eq:symmetric-L2-middle}}{<}
    \eta(2\sqrt H+\eta)
    \stackrel{\eqref{choose eta < varepsilon}}{<}\varepsilon.
  \notag
\end{eqnarray*}
Moreover, setting
\[
  R_\eta=\max\{C_\eta^{(1)},C_\eta^{(2)}\},
\] 
by \eqref{eq:symmetric-L2-approx},
\[
  \|m_x\|_\infty
  \le\|v_x\|_\infty\|w_x\|_\infty
  \le R_\eta^2.
\]
Thus the conclusion follows with
\(C_\varepsilon=R_\eta^2\).
\end{proof}

The next proposition estimates the loss caused by compressing
\(D^{1/2}aD^{1/2}\) to the spectral window \(P_N\), which will be used in the proof of
\corref{cor:biequi-weak-null} to preserve a uniform lower bound after
compression.

\begin{proposition}
\label{prop:core-big-window}
Let \(\M\) be a \(\sigma\)-finite von Neumann algebra,
and retain the notation \(\varphi,\Core,\Trc,D,P_N\)
introduced in \secref{sec:core-window-prelim}.
For \(j\in\mathbb Z\), set
\begin{align}
 p_j=\one_{[e^j,e^{j+1})}(D).
 \label{eq:log-blocks}
\end{align}
Let \(y=D^{1/2}aD^{1/2}\), where \(a\in\M\) and
\(\|a\|_\infty\leq C\).  Put
\begin{align}
 B_0=\frac{2(e-1)e^{-1/2}}{(1-e^{-1/2})^2}.
 \label{eq:B0}
\end{align}
Then
\begin{align}
 \|p_iyp_j\|_{L_1(\Core,\Trc)}
 \leq(e-1)C e^{-|i-j|/2}
 \qquad(i,j\in\mathbb Z),
 \label{eq:block-decay}
\end{align}
and
\begin{align}
 \frac1{2N}\|P_NyP_N\|_{L_1(\Core,\Trc)}
 \geq\|y\|_{L_1(\M)}-\frac{B_0C}{2N}.
 \label{eq:big-window-lower}
\end{align}
\end{proposition}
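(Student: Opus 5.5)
The plan has two parts: Hölder estimates on the logarithmic spectral blocks of \(D\) for \eqref{eq:block-decay}, and then \eqref{eq:positive-window-mass} together with a boundary-block summation for \eqref{eq:big-window-lower}. Throughout I work in \(L_0(\Core,\Trc)\), where \(D\) is a \(\Trc\)-measurable operator. I use the standard fact for the Haagerup density of a state, \(\Trc(\one_{(s,\infty)}(D))=\varphi(\one)/s=1/s\), which follows from \eqref{theta_t} and \eqref{trc theta}. Consequently the spectral measure \(\Trc\circ\one_{(\cdot)}(D)\) on \((0,\infty)\) is \(\lambda^{-2}\,d\lambda\), and for a Borel \(f\ge0\),
\[
  \Trc(f(D))=\int_0^\infty f(\lambda)\lambda^{-2}\,d\lambda .
\]

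\textbf{Block decay.} Write \(p_iyp_j=(p_iD^{1/2})\,a\,(D^{1/2}p_j)\), where \(p_iD^{1/2}=D^{1/2}p_i=(D^{1/2}p_i)^*\) since \(p_i\) is a spectral projection of \(D\). Suppose first that \(j\le i\). H\"older's inequality in \((\Core,\Trc)\) with exponents \((1,\infty)\) gives
\[
  \|p_iyp_j\|_{L_1(\Core,\Trc)}\le \Trc(p_iD^{1/2})\,C\,\|D^{1/2}p_j\|_\infty .
\]
The first factor is \(\int_{e^i}^{e^{i+1}}\lambda^{-3/2}d\lambda=2(1-e^{-1/2})e^{-i/2}\), and the last is at most \(e^{(j+1)/2}\). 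The product is therefore \(2(e^{1/2}-1)C\,e^{-(i-j)/2}\le (e-1)Ce^{-|i-j|/2}\). The case \(i<j\) is symmetric, putting the \(L_1\)-norm on \(D^{1/2}p_j\) and the operator norm on \(p_iD^{1/2}\). I must check that \(p_iD^{1/2}\) is bounded, which holds because the spectrum of \(D\) on \(p_i\) lies in \([e^i,e^{i+1})\), and that \(\Trc(p_i)<\infty\), which the formula above gives.

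\textbf{Window lower bound.} Let \(y=u|y|\) be the polar decomposition, with \(u\in\M\) by \secref{Haagerup}. Then \(u^*y=|y|\in L_1(\M)_+\), so \eqref{eq:positive-window-mass} gives \(\Trc(P_Nu^*yP_N)=2N\|y\|_1\). Insert \(\one=P_N+P_N^\perp\) between \(u^*\) and \(y\):
\[
  2N\|y\|_1=\Trc(P_Nu^*P_N\,yP_N)+\Trc(P_Nu^*P_N^\perp\,yP_N).
\]
By the \(L_1\)--\(L_\infty\) duality in \((\Core,\Trc)\), using that \(\|u\|_\infty\le1\) and that \(P_N,P_N^\perp\) are spectral projections of \(D\), each of which is a sum of blocks \(p_i\), the first term is at most \(\|P_NyP_N\|_{L_1(\Core,\Trc)}\) and the second is at most \(\|P_N^\perp yP_N\|_{L_1(\Core,\Trc)}\). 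Since \(P_N=\sum_{j=-N}^{N-1}p_j\) and \(P_N^\perp=\sum_{i<-N}p_i+\sum_{i\ge N}p_i\), I expand \(P_N^\perp yP_N=\sum_{i\notin[-N,N)}\sum_{j\in[-N,N)}p_iyp_j\). I then bound its norm by \(\sum(e-1)Ce^{-|i-j|/2}\) using \eqref{eq:block-decay}; the next step justifies that this expansion and triangle inequality are legitimate.

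\textbf{Main obstacle and the count.} The step I expect to need the most care is justifying the term-by-term bound for the infinite sum over \(i\). The partial sums \(\sum_{|i|\le M}p_i\,yP_N\) converge to \(P_N^\perp yP_N\) in measure. Since the block bounds are absolutely summable, the series converges in \(L_1(\Core,\Trc)\), and the limits coincide, so the triangle inequality applies. For the count, on each side of the window, distance \(k\ge1\) is realised by at most \(k\) pairs \((i,j)\) with \(i\) outside and \(j\) inside (the cap is \(2N\) rather than \(k\) when \(k>2N\), which only helps). Hence
\[
  \|P_N^\perp yP_N\|_{L_1(\Core,\Trc)}\le 2(e-1)C\sum_{k\ge1}ke^{-k/2}=2(e-1)C\,\frac{e^{-1/2}}{(1-e^{-1/2})^2}=B_0C .
\]
Dividing the identity by \(2N\) gives \eqref{eq:big-window-lower}.
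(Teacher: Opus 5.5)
Your proposal is correct and follows essentially the paper's route. You apply H\"older on the logarithmic blocks via the distribution of \(D\), and then combine the polar decomposition with \eqref{eq:positive-window-mass} and the same boundary count that yields \(B_0\); the differences are only cosmetic. You put the \(L_1\)-norm on \(p_iD^{1/2}\) instead of using \(\|p_iyp_j\|_\infty\min\{\Trc(p_i),\Trc(p_j)\}\), which gives the slightly sharper constant \(2(e^{1/2}-1)\le e-1\), and you work with the mirror corner \(P_N^\perp yP_N\) rather than \(P_Ny(1-P_N)\).

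Two small remarks. First, \eqref{theta_t} and \eqref{trc theta} alone only give \(\Trc(\one_{(s,\infty)}(D))=c/s\); pinning down \(c=1\) requires the dual-weight identity \(\Trc(D\one_{[1,e)}(D))=\varphi(\Phi(\one_{[1,e)}(D)))=1\), as in the paper. Second, your convergence in measure of the partial sums holds because the right factor \(P_N\) has finite trace; strong convergence of projections alone would not suffice in \(\Core\). The paper's infinite summation passes over this point silently.
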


\begin{proof}
Put \(F(s)=\Trc(\one_{(s,\infty)}(D))\).  We first show that \(F(s)<\infty\) for every \(s>0\). For
\(p_0=\one_{[1,e)}(D)\).  Since \(p_0\) commutes with \(D\), \eqref{eq:widehat-Phi-functional} and \eqref{eq:Phi-functional} yield that
\begin{align}
 \Trc(Dp_0)=\Trc(D^{1/2}p_0D^{1/2})\stackrel{\eqref{eq:widehat-Phi-functional}}{=}\widehat\varphi(p_0)
 \stackrel{\eqref{def:widehat varphi}}{=}\varphi(\Phi(p_0))\stackrel{\eqref{eq:Phi-functional}}{=}\varphi(\one)=1.\label{eq:Dp0-trace}
\end{align}
Since \(D\geq1\) on \(p_0\), it follow from \eqref{eq:Dp0-trace} that \(\Trc(p_0)\le 1\).  Moreover, we observe that
\[\theta_j(p_0)\stackrel{\eqref{theta_t}}{=}p_j ,\qquad\Trc\circ\theta_j\stackrel{\eqref{trc theta}}{=}e^{-j}\Trc,\] 
which implies that
\[\Trc(p_j)=e^{-j}\Trc(p_0).\]
Then, choose \(m\in \mathbb{Z}\) such that \(e^m\le s<e^{m+1}\).  It follows from the above equation that
\[
F(s)
\le \sum_{j=m}^{\infty}\Trc(p_j)
=\Trc(p_0)\sum_{j=m}^{\infty}e^{-j}
=\Trc(p_0)\frac{e^{-m}}{1-e^{-1}}
<\infty.
\]

We observe that
\[
F(e^ts)
=\Trc\!\left(\mathbf1_{(e^ts,\infty)}(D)\right)
=\Trc\!\left(\theta_t(\mathbf1_{(s,\infty)}(D))\right)
=e^{-t}\Trc\!\left(\mathbf1_{(s,\infty)}(D)\right)
=e^{-t}F(s).
\]
which implies that
\begin{align}\label{F(s)=frac}
 F(s)=\frac{F(1)}{s}\qquad(s>0).
\end{align}
Let \(\nu_D\) be the spectral distribution measure of \(D\), so
that \(F(s)=\nu_D((s,\infty))\).  Since \(F(s)=F(1)/s\) is continuous,
\(\nu_D\) has no atoms and 
\[d\nu_D(s)\stackrel{\eqref{F(s)=frac}}{=}F(1)s^{-2}\,ds.\]  Hence,
\begin{align}\label{F(1)=1}
1\stackrel{\eqref{eq:Dp0-trace}}{=}\Trc(Dp_0)
=\int_{[1,e)}s\,d\nu_D(s)
=F(1)\int_1^e\frac{ds}{s}=F(1).
\end{align}
Combining \eqref{F(s)=frac} and \eqref{F(1)=1}, we have that
\begin{align*}
 F(s)=\Trc(\one_{(s,\infty)}(D))=s^{-1}\qquad(s>0).
\end{align*}
Consequently,
\begin{align}
 \Trc(p_j)=(1-e^{-1})e^{-j}.
 \label{eq:pj-trace}
\end{align}

Since \(y=D^{1/2}aD^{1/2}\), where \(a\in\M\) and
\(\|a\|_\infty\leq C\), we have that
\begin{align}
 \|p_iyp_j\|_\infty
 &=\|p_iD^{1/2}aD^{1/2}p_j\|_\infty
 \label{eq:block-infty}\\
 &\le \|p_iD^{1/2}\|_\infty\|a\|_\infty\|D^{1/2}p_j\|_\infty
 \stackrel{\eqref{eq:log-blocks}}{\leq}
 Ce^{(i+1)/2}e^{(j+1)/2}. \notag
\end{align}
Then, it follows from H\"older's inequality that
\[
 \|p_iyp_j\|_{L_1(\Core,\Trc)}\leq\|p_iyp_j\|_\infty
 \min\{\Trc(p_i),\Trc(p_j)\}\stackrel{\eqref{eq:pj-trace},\eqref{eq:block-infty}}{\le}(e-1)C e^{-|i-j|/2},
\]
which proves
\eqref{eq:block-decay}.

Summing \eqref{eq:block-decay} over the blocks outside
the spectral window gives
\begin{align}\label{sum q_i}
    \sum_{i=-N}^{N-1}\sum_{j=N}^{\infty}\|p_iyp_j\|_{L_1(\Core,\Trc)}
    \leq (e-1)C\frac{e^{-1/2}}{(1-e^{-1/2})^2},
\end{align}
and the same estimate holds for \(j\leq -N-1\). Hence,
\begin{eqnarray}\label{eq:cross-window}
   \|P_Ny(1-P_N)\|_{L_1(\Core,\Trc)}&\stackrel{\eqref{eq:PN-def}}{=}&\left\|\left(\sum_{i=-N}^{N-1} p_i\right)y\left(1-\sum_{j=-N}^{N-1} p_i\right)\right\|_{L_1(\Core,\Trc)}\\ 
&\leq&\sum_{i=-N}^{N-1}
 \left(
   \sum_{j=N}^{\infty}\|p_iyp_j\|_{L_1(\Core,\Trc)}
   +\sum_{j=-\infty}^{-N-1}\|p_iyp_j\|_{L_1(\Core,\Trc)}
 \right)\nonumber\\ 
  &\stackrel{\eqref{sum q_i}}{\le}&\frac{2(e-1)e^{-1/2}}{(1-e^{-1/2})^2}C\stackrel{\eqref{eq:B0}}{=} B_0C.\nonumber
\end{eqnarray}
Now write the Haagerup polar decomposition as \(y=u|y|\). Since \(u\) is a partial isometry and \(P_N\) is a
projection,
\begin{align}\label{u^*P_N}
\|u^*P_N\|_\infty\leq1.
\end{align}
Then, we have that
\begin{eqnarray*}
\|P_NyP_N\|_{L_1(\Core,\Trc)}
    &\stackrel{\eqref{u^*P_N}}{\ge}&
    \bigl|\Trc(P_NyP_Nu^*P_N)\bigr| 
    \\&\geq&
    \Trc(P_N|y^*|P_N)
    -\|P_Ny(1-P_N)u^*P_N\|_{L_1(\Core,\Trc)} \\
    &\ge& \Trc(P_N|y^*|P_N)
    -\|P_Ny(1-P_N)\|_{L_1(\Core,\Trc)}\|u^*P_N\|_\infty\\
    &\stackrel{\eqref{eq:cross-window},\eqref{eq:positive-window-mass}}{\ge}&
    2N\|y\|_{L_1(\M)}-B_0C.
\end{eqnarray*}
This proves \eqref{eq:big-window-lower}.
\end{proof}

In the \(\sigma\)-finite setting, combining \lemref{lem:uniform-symmetric-factorization}
and \propref{prop:core-big-window} with
\corref{cor:finite-weak-null}, we obtain the following
result for seminormalized weakly null, biequiintegrable sequences in \(L_1(\M)\).

\begin{corollary}
\label{cor:biequi-weak-null}
Let \(\M\) be a \rev{\(\sigma\)-finite von Neumann algebra}.  Every
seminormalized weakly null,
biequiintegrable sequence in \(L_1(\M)\) has a \(2\)-co-lacunary
subsequence.
\end{corollary}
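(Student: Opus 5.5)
Let \((x_n)\) be seminormalized, weakly null and biequiintegrable in \(L_1(\M)\), with \(a:=\inf_n\|x_n\|_1>0\) and \(H:=\sup_n\|x_n\|_1\). The plan is to transfer the sequence, by one fixed compression map \(\mathcal J_N(x)=\frac1{2N}P_NxP_N\), into the finite-trace space \(L_1(P_N\Core P_N,\Trc)\). There we apply \corref{cor:finite-weak-null}, and then pull the lower \(\ell_2\)-estimate back to \(L_1(\M)\).

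\textbf{Step 1: choice of \(N\).} Apply \lemref{lem:uniform-symmetric-factorization} with \(\varepsilon=a/4\) to \(K=\{x_n\}\). This gives \(C=C_{a/4}\) and \(m_n\in\M\) with \(\|m_n\|_\infty\le C\) and \(\|x_n-y_n\|_1<a/4\), where \(y_n=D^{1/2}m_nD^{1/2}\); in particular \(\|y_n\|_1>3a/4\). By \eqref{eq:big-window-lower} in \propref{prop:core-big-window},
\[
\|\mathcal J_N(y_n)\|_{L_1(\Core,\Trc)}\ge \tfrac{3a}{4}-\tfrac{B_0C}{2N}.
\]
We also need \(\|\mathcal J_N(z)\|_{L_1(\Core,\Trc)}\le\|z\|_1\) for all \(z\in L_1(\M)\). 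To prove it, pair against \(w\in P_N\Core P_N\) with \(\|w\|_\infty\le1\) using \eqref{eq:window-weight-duality}:
\[
|\Trc(P_NzP_Nw)|=|\langle z,\Phi(P_NwP_N)\rangle|.
\]
Decompose \(w\) into four positive parts of norm \(\le1\); each satisfies \(0\le\Phi(P_Nw_iP_N)\le\Phi(P_N)=2N\one\). This gives the bound \(\frac{1}{2N}|\Trc(P_NzP_Nw)|\le 4\|z\|_1\), which suffices. A sharper version, via the Kadison–Schwarz type inequality \(\|\Phi(P_NwP_N)\|\le 2N\), gives constant \(1\). With contractivity up to a constant \(\kappa\le 4\), we get
\[
\|\mathcal J_N(x_n)\|\ge \tfrac{3a}{4}-\kappa\tfrac a4-\tfrac{B_0C}{2N}.
\]
To keep this positive I will rerun Step 1 with \(\varepsilon=a/(4\kappa)\). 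I then fix \(N\) so large that \(B_0C/(2N)<a/4\), so \(\inf_n\|\mathcal J_N(x_n)\|\ge a/4\) (say). The upper bound \(\sup_n\|\mathcal J_N(x_n)\|\le \kappa H\) follows from the same estimate, so \((\mathcal J_N x_n)\) is seminormalized.

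\textbf{Step 2: weak nullity and the finite trace.} The map \(\mathcal J_N\) is bounded and linear from \(L_1(\M)\) into \(L_1(P_N\Core P_N,\Trc)\), so it is weak-to-weak continuous and \(\mathcal J_N(x_n)\weakto0\). The restriction of \(\Trc\) to \(P_N\Core P_N\) is a normal faithful trace, and it is finite because \(\Trc(P_N)=\sum_{j=-N}^{N-1}(1-e^{-1})e^{-j}<\infty\) by \eqref{eq:pj-trace}. \corref{cor:finite-weak-null} then yields a subsequence \((n_k)\) and \(\gamma>0\) with
\[
\Bigl\|\sum_k a_k\mathcal J_N(x_{n_k})\Bigr\|\ge\gamma\Bigl(\sum_k|a_k|^2\Bigr)^{1/2}.
\]

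\textbf{Step 3: pulling back.} By linearity and boundedness of \(\mathcal J_N\),
\[
\kappa\Bigl\|\sum a_kx_{n_k}\Bigr\|_1\ge\Bigl\|\mathcal J_N\Bigl(\sum a_kx_{n_k}\Bigr)\Bigr\|\ge\gamma\Bigl(\sum|a_k|^2\Bigr)^{1/2}.
\]
Hence \((x_{n_k})\) is \(2\)-co-lacunary with constant \(\gamma/\kappa\).

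The main obstacle is Step 1's uniform boundedness of \(\mathcal J_N\) on all of \(L_1(\M)\), not merely on positive elements as in \eqref{eq:positive-window-mass}. My first attempt is the duality route above. A fallback is to write \(z\) as a combination of four positive elements of \(L_1(\M)\), each of norm \(\le\|z\|_1\), and apply \eqref{eq:positive-window-mass} together with \(\|P_NzP_N\|_1=\Trc(P_NzP_N)\) for \(z\ge0\). This gives \(\kappa\le4\) directly, and only changes the choice of \(\varepsilon\) and \(N\).
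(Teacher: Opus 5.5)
Your proposal is correct and is essentially the paper's proof: the same compression \(\mathcal J_N\), the same use of \lemref{lem:uniform-symmetric-factorization} and \eqref{eq:big-window-lower} to fix a single \(N\) that keeps \((\mathcal J_Nx_n)\) seminormalized, the same reduction to \corref{cor:finite-weak-null} via \(\Trc(P_N)<\infty\), and the same pull-back of the lower \(\ell_2\)-estimate. The only difference is how you bound \(\mathcal J_N\): the paper gets \(\|\mathcal J_N\|\le1\) from the Haagerup--Kraus lemma (your ``sharper version''), whereas your fallback combines the Jordan decomposition \(z=z_1-z_2+i(z_3-z_4)\) with \eqref{eq:positive-window-mass} to get a constant \(\kappa\le4\) (in fact \(\le2\)); this is more elementary, also shows directly that \(P_NzP_N\in L_1(\Core,\Trc)\), and costs you only an adjusted \(\varepsilon\) and the final constant \(\gamma/\kappa\).
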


\begin{proof}
Let 
\begin{align}
\delta=\inf_n\|x_n\|_1>0,\label{eq:delta-inf-norm}
\end{align} 
set \(\varepsilon=\delta/8\), and put
\(K=\{x_n:n\geq1\}\).  By
\lemref{lem:uniform-symmetric-factorization}, there exists a constant
\(C_\varepsilon<\infty\) such that, for every \(n\),  we can choose
\begin{align}
 y_n=D^{1/2}m_nD^{1/2},\qquad
 \|m_n\|_\infty\leq C_\varepsilon,\qquad
 \|x_n-y_n\|_1<\varepsilon.\label{eq:x_n-y_n}
\end{align}
Choose a positive integer \(N\) such that
\begin{align}
 N\geq\frac{2B_0C_\varepsilon}{\delta}.\label{eq:N-ge}
\end{align}
Then, define
\[
 \mathcal J_N:L_1(\M)\longrightarrow L_1(P_N\Core P_N,\Trc),
 \qquad
 \mathcal J_N(x)=\frac1{2N}P_NxP_N.
\]
and 
\[
T_N:\mathcal C\to\mathcal M,\qquad
T_N(z)=\frac1{2N}\Phi(P_NzP_N).
\]
By \cite[Lemma~3.3]{HaagerupKraus1994}, applied with
\(a=(2N)^{-1/2}P_N\), we have that
\begin{align}\label{T_N}
\|T_N\|\le1,
\end{align}
where \(\|\cdot\|\) denotes the operator norm.
Then 
\[
\|\mathcal J_N(x)\|_1=\sup_{\substack{z\in\mathcal C\\\|z\|_\infty\le1}}
|\operatorname{Tr}(\mathcal J_N(x)z)|
\stackrel{\eqref{eq:window-weight-duality}}{=}
\sup_{\substack{z\in\mathcal C\\\|z\|_\infty\le1}}
|\langle x,T_N(z)\rangle|
\stackrel{\eqref{T_N}}{\le} \|x\|_1,
\]
which implies that \(\mathcal J_N\) is a contraction. By  \propref{prop:core-big-window}, for every \(n\), we
have that
\begin{eqnarray*}
 \|\mathcal J_Nx_n\|_{L_1(\Core,\Trc)}
 &\geq&
 \|\mathcal J_Ny_n\|_{L_1(\Core,\Trc)}
 -\|\mathcal J_N(x_n-y_n)\|_{L_1(\Core,\Trc)}\\
 &\stackrel{\eqref{eq:x_n-y_n}}{\geq}&
 \|\mathcal J_Ny_n\|_{L_1(\Core,\Trc)}
 -\varepsilon
 \\
 &\stackrel{\eqref{eq:big-window-lower}}{\geq}&
 \|y_n\|_{L_1(\M)}
 -\frac{B_0C_\varepsilon}{2N}
 -\varepsilon\\
 &\stackrel{\eqref{eq:x_n-y_n}}{\geq}&
 \|x_n\|_{L_1(\M)}
 -2\varepsilon
 -\frac{B_0C_\varepsilon}{2N}
 \stackrel{\eqref{eq:delta-inf-norm},\eqref{eq:N-ge}}{>} \frac{\delta}{2},
\end{eqnarray*}
which implies that
\begin{align}
 \|\mathcal J_Nx_n\|_{L_1(\Core,\Trc)}
 \geq\frac\delta2.
 \label{eq:JN-seminormalized}
\end{align}
Moreover, summing
\eqref{eq:pj-trace} over \(-N\leq j\leq N-1\) gives
\[
 \Trc(P_N)=e^N-e^{-N}<\infty,
\]
which implies that the corner \(P_N\Core P_N\) has finite trace.
Since \(\mathcal J_N\) is bounded and linear, it follows from \(x_n\weakto0\) that \(\mathcal J_Nx_n\weakto0\). Together with \eqref{eq:JN-seminormalized}, \((\mathcal{J}x_n)\) satisfy the condition in
\corref{cor:finite-weak-null}, and then \((\mathcal{J}x_n)\) has a \(2\)-co-lacunary subsequence
\((\mathcal J_Nx_{n_k})\).  If \(\gamma>0\) is a lower \(2\)-co-lacunary constant for this subsequence, then
\[
 \left\|\sum_k a_kx_{n_k}\right\|_1
 \geq\left\|\sum_k a_k\mathcal J_Nx_{n_k}\right\|_{L_1(\Core,\Trc)}
 \geq\gamma\left(\sum_k|a_k|^2\right)^{1/2}.
\]
This completes the proof.
\end{proof}


We now pass to Haagerup \(L_1\)-spaces over arbitrary von Neumann
algebras. 

The next lemma obtains a lower \(\ell_2\)-estimate from one-sided
compressions by mutually orthogonal projections.

\begin{lemma}\label{lem:one-sided-witness}
Let \(\M\) be an arbitrary von Neumann algebra.
Let \((x_n)\subset L_1(\M)\) be weakly null and
let \((p_n)\subset\M\) be pairwise orthogonal projections.
If, for some \(\delta>0\),
\begin{align}
  \inf_n\|x_np_n\|_1\geq\delta,
  \label{eq:right-mass}
\end{align}
then \((x_n)\) has a \(2\)-co-lacunary subsequence.
The same conclusion holds if \eqref{eq:right-mass}
is replaced by
\(\inf_n\|p_nx_n\|_1\geq\delta\).
\end{lemma}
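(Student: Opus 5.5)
We plan to build test operators in \(\M\) from the polar decompositions of the compressions \(x_np_n\), and to read off the lower \(\ell_2\)-estimate by duality. Write \(x_np_n=u_n|x_np_n|\) with \(u_n\in\M\) a partial isometry. The right support of \(x_np_n\) lies under \(p_n\), so \(u_n=u_np_n\). Set \(w_n=p_nu_n^*\in\M\). Then \(\operatorname{tr}_{\M}(x_nw_n)=\operatorname{tr}_{\M}(x_np_nu_n^*)=\|x_np_n\|_1\geq\delta\). Since the \(p_n\) are pairwise orthogonal, the operators \(w_n\) have pairwise orthogonal left supports \(p_n\). Hence for scalars \(\beta_n\) with \(\sum|\beta_n|^2\leq1\) we get
\[
\Bigl\|\sum_n\beta_nw_n\Bigr\|_\infty^2=\Bigl\|\sum_n|\beta_n|^2u_np_nu_n^*\Bigr\|_\infty\leq\sum_n|\beta_n|^2\leq1,
\]
because \((\sum\beta_nw_n)^*(\sum\beta_nw_n)=\sum_n|\beta_n|^2u_np_nu_n^*\) by orthogonality. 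Consequently, for every finitely supported \((a_n)\) and every such \((\beta_n)\), the inequality \eqref{tr le 1 infty} gives
\[
\Bigl\|\sum_n a_nx_n\Bigr\|_1\geq\Bigl|\sum_{n,m}a_n\beta_m\operatorname{tr}_{\M}(x_nw_m)\Bigr|.
\]

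The next step is to pass to a subsequence on which the off-diagonal pairings \(\operatorname{tr}_{\M}(x_nw_m)\), \(n\neq m\), are summably small. Weak nullness handles one direction: for fixed \(m\), \(\operatorname{tr}_{\M}(x_nw_m)\to0\) as \(n\to\infty\). For the other direction, fix \(n\). The projections \(p_m\) are pairwise orthogonal, so \(\sum_m p_m\) converges strongly, and \(\|x_np_m\|_1\to0\) as \(m\to\infty\). Indeed, \(x_n\) is a single element of \(L_1(\M)\), and the singleton \(\{x_n\}\) is right equiintegrable with respect to the net of tails \(\sum_{m\geq M}p_m\to0\), by Lemma~\ref{lem:equi-approx} or normality. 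It follows that \(|\operatorname{tr}_{\M}(x_nw_m)|\leq\|x_np_m\|_1\to0\). A diagonal extraction then gives indices \(n_1<n_2<\cdots\) with \(|\operatorname{tr}_{\M}(x_{n_k}w_{n_l})|\leq\delta 2^{-k-l}\) for \(k\neq l\). At step \(k\), we choose \(n_k\) large enough that the new row entries \(\operatorname{tr}(x_{n_k}w_{n_l})\), \(l<k\), are small by weak nullness. The new column entries \(\operatorname{tr}(x_{n_l}w_{n_k})\), \(l<k\), must also be small, and for this we use the decay \(\|x_{n_l}p_m\|_1\to0\) in \(m\).

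Finally, we relabel the subsequence and take \(\beta_k=\overline{a_k}/(\sum|a_k|^2)^{1/2}\) with \(\gamma_k=\operatorname{tr}(x_kw_k)\geq\delta\) real. The diagonal contributes \(\sum_k|a_k|^2\gamma_k/\|a\|_2\geq\delta\|a\|_2\). The off-diagonal contributes at most \(\|a\|_2^{-1}\sum_{k\neq l}|a_k||a_l|\delta2^{-k-l}\leq\delta\|a\|_2\sum_{k\neq l}2^{-k-l}\leq\delta\|a\|_2/3\). Together these give \(\|\sum a_kx_k\|_1\geq\tfrac{2\delta}{3}\|a\|_2\), which is the \(2\)-co-lacunary estimate. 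The left-sided case \(\inf\|p_nx_n\|_1\geq\delta\) is symmetric: we take \(p_nx_n=v_n|p_nx_n|\) and use the test operators \(|p_nx_n|\)-polar vectors \(w_n=v_n^*p_n\), whose right supports are under the orthogonal \(p_n\); alternatively, we apply the right-sided case to the adjoints \(x_n^*\).

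I expect the main obstacle to be the column-smallness step, namely justifying \(\|x_np_m\|_1\to0\) as \(m\to\infty\) for a fixed \(x_n\) in the type-free Haagerup setting. I would prove it directly. We have \(\|x_np_m\|_1^2\leq\|x_n\|_1\,\operatorname{tr}(p_m|x_n|p_m)\), using the factorization \(x=u|x|^{1/2}|x|^{1/2}\) and H\"older's inequality. The quantities \(\operatorname{tr}(|x_n|p_m)\) are summable in \(m\) because \(\sum_m\operatorname{tr}(|x_n|p_m)=\operatorname{tr}(|x_n|\sum_mp_m)\leq\|x_n\|_1\), by normality of the positive functional \(|x_n|\). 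This route avoids invoking equiintegrability at all.
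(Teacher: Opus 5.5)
Your proposal is correct and takes essentially the paper's route: polar-decomposition witnesses \(w_n=u_n^*\) with pairwise orthogonal supports under \(p_n\), the bound \(\|\sum_n\beta_nw_n\|_\infty\le\|\beta\|_{\ell_2}\), a diagonal extraction that makes the off-diagonal pairings \(\operatorname{tr}_{\M}(x_kw_l)\) small, and \(L_1\)--\(L_\infty\) duality. Your column estimate \(\|x_np_m\|_1^2\le\|x_n\|_1\operatorname{tr}_{\M}(|x_n|p_m)\to0\), obtained from normality, is a quantitative form of the paper's observation that \(w_m\to0\) weak-star.

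There is one arithmetic slip. In fact \(\sum_{k\neq l}2^{-k-l}=1-\tfrac13=\tfrac23\), not at most \(\tfrac13\), so the crude bound \(|a_k||a_l|\le\|a\|_2^2\) only yields the constant \(\delta/3\); this still proves the lemma. To keep \(2\delta/3\), use Cauchy--Schwarz: \(\sum_{k\neq l}|a_k||a_l|2^{-k-l}\le\bigl(\sum_k|a_k|2^{-k}\bigr)^2\le\|a\|_2^2/3\).
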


\begin{proof}
We prove the right-support version; the left-support
version follows by applying the same argument to
\((x_n^*)\).  Let
\begin{align}
 z_n=x_np_n=v_n|z_n|,\qquad w_n=v_n^*.\label{eq:z_n=x_np_n}
\end{align}
It follows from \(v_n^*v_n\leq p_n\) that 
\begin{align}p_nw_n=w_n.\label{w_n=p_nw_n}\end{align}  
Hence 
\[
w_i^*w_j=w_i^*p_ip_jw_j=0\quad(i\ne j).
\]
For every finite scalar family \((\beta_j)\), it follows that
\begin{align}
 \left\|\sum_j\beta_jw_j\right\|_\infty^2
 =\left\|\sum_j|\beta_j|^2w_j^*w_j\right\|_\infty
 \leq\sum_j|\beta_j|^2.
 \label{eq:witness-row-bound}
\end{align}
Then it follows from the polar decomposition that
\begin{align}
 \begin{aligned}
 \langle x_n,w_n\rangle
 &\stackrel{\eqref{eq:haagerup-pairing}}{=}\operatorname{tr}_{\M}(x_nw_n)
  \stackrel{\eqref{w_n=p_nw_n}}{=}\operatorname{tr}_{\M}(x_np_nv_n^*)\stackrel{\eqref{eq:z_n=x_np_n}}{=}\operatorname{tr}_{\M}(z_nv_n^*)\\
 &\stackrel{\eqref{eq:z_n=x_np_n}}{=}
  \operatorname{tr}_{\M}(|z_n|)\stackrel{\eqref{eq:haagerup-pairing}}{=}\|z_n\|_1
  \stackrel{\eqref{eq:z_n=x_np_n}}{=}\|x_np_n\|_1
  \stackrel{\eqref{eq:right-mass}}{\ge}\delta.
 \end{aligned}
 \label{eq:witness-diagonal}
\end{align}

Fix a faithful normal representation \(\M\subset B(H)\).
Since the projections \(p_n\) are pairwise orthogonal,
\(p_n\to0\) strongly on \(H\).
Thus, for every \(\xi,\eta\in H\),
\[
  \bigl|(w_n\xi,\eta)_H\bigr|
  \stackrel{\eqref{w_n=p_nw_n}}{=}
  \bigl|(w_n\xi,p_n\eta)_H\bigr|
  \leq
  \|w_n\|_\infty\|\xi\|_H\|p_n\eta\|_H
  \longrightarrow0,
\]
where \((\cdot,\cdot)_H\) and \(\|\cdot\|_H\)
denote the inner product and norm on \(H\),
respectively.
Thus \(w_n\to0\) in the weak operator topology.  Since \((w_n)\)
is uniformly bounded, it follows from \cite[Lemma 2.5(i)]{Takesaki1979} that \(w_n\to0\) ultraweakly,
equivalently, weak-star in \(\M\).  Hence, for every fixed \(x\in L_1(\M)\).
\begin{align}
 \langle x,w_n\rangle\longrightarrow0
 \text{ as } n\to \infty.\label{eq:xp_n-to-0}
\end{align}
For a fixed \(w\in \M\), it follows from \(x_i\weakto0\) in \(L_1(\M)=\M_*\) that
\begin{align}
|\langle x_n,w\rangle|\longrightarrow0
 \qquad(n\to\infty).\label{x_np-to-0}
\end{align}
Then, for a fixed \(0<\eta<\delta\), choose positive numbers
\((\varepsilon_{ij})_{i<j}\) such that
\[2\sum_{i<j}\varepsilon_{ij}^2<\eta^2.\]
By \eqref{eq:xp_n-to-0} and \eqref{x_np-to-0}, we can recursively choose a subsequence \((x_{n_j},p_{n_j},w_{n_j})\), and then relabel it as
\((x_j,p_j,w_j)\), so that, for \(i<j\),
\[
 |\langle x_i,w_j\rangle|<\varepsilon_{ij},
 \qquad
 |\langle x_j,w_i\rangle|<\varepsilon_{ij}.
\]
Then we have that
\begin{align}
 \sum_{i\ne j}|\langle x_i,w_j\rangle|^2<\eta^2.
 \label{eq:HS-off-diagonal}
\end{align}

Let 
\begin{align}A=(\langle x_i,w_j\rangle)=(a_{ij}).\label{eq:A=a_ij}\end{align} 
Let \(S_2(\ell_2)\) denote the space of
Hilbert--Schmidt operators on \(\ell_2\), equipped
with the norm
\[
  \|T\|_{S_2}
  =\left(\sum_{i,j\geq1}|t_{ij}|^2\right)^{1/2},
  \qquad T=(t_{ij})\in S_2(\ell_2).
\]
By \eqref{eq:HS-off-diagonal}, \(R\in S_2(\ell_2)\).
Consequently,
\begin{align}
  \|R\|_{B(\ell_2)}
  \leq\|R\|_{S_2}
  \stackrel{\eqref{eq:HS-off-diagonal}}{<}\eta.
  \label{eq:R-le-eta}
\end{align}

Let \(\Delta_A\) be its diagonal,
and put \(R=A-\Delta_A\).  
For every finitely supported scalar sequence
\(\alpha=(\alpha_i)\), we have that
\begin{align}
 \|\Delta_A\alpha\|_{\ell_2}\stackrel{\eqref{eq:witness-diagonal}}{\ge}\delta\|\alpha\|_{\ell_2}\label{eq:Delta_A-alpha}.
\end{align}
Then for \(\alpha\ne 0\), 
write
\(d_j=\langle x_j,w_j\rangle=\|x_jp_j\|_1>0\), and define
\[
 \beta_j=\frac{\overline{\alpha_j}d_j}
 {(\sum_i|\alpha_i|^2d_i^2)^{1/2}},\qquad \beta=(\beta_j).
\]
Then \(\|\beta\|_{\ell_2}=1\) and
\[
 \|\sum_j\beta_jw_j\|_\infty^2
 \stackrel{\eqref{eq:witness-row-bound}}{\le}\sum_j|\beta_j|^2=1.
\]
Hence, \(\sum_j\beta_jw_j\) is a contraction in \(\M\). It follows from \eqref{tr le 1 infty} and H\"older inequality that
\begin{align*}
 \left\|\sum_i\alpha_ix_i\right\|_1&\ge \left\|\sum_i\sum_j\alpha_i\beta_j x_i w_j\right\|_1\stackrel{\eqref{tr le 1 infty},\eqref{eq:A=a_ij}}{\ge}
\left|\sum_{i,j}\alpha_i\beta_j a_{ij}\right|\\
&\geq
 \frac{\sum_i|\alpha_i|^2d_i^2}{\sum_i\lvert\alpha_i\rvert^2d_i^2)^{1/2}}
 -\left|\sum_{i\neq j}\alpha_i\beta_ja_{ij}\right|
\\
 &\geq \|\Delta_A\alpha\|_2-\|R\|_{S_2}\|\alpha\|_2\|\beta\|_2
 \stackrel{\eqref{eq:R-le-eta},\eqref{eq:Delta_A-alpha}}{>}(\delta-\eta)\|\alpha\|_2.
\end{align*}
Since \(\delta-\eta>0\), the selected
subsequence is \(2\)-co-lacunary.
This completes the proof.
\end{proof}

We now combine the one-sided and biequiintegrable
cases. Before applying the four-corner decomposition
in \lemref{lem:four-corners}, we reduce to a
\(\sigma\)-finite corner containing the sequence.

\begin{lemma}\label{lem:sigma-finite-corner}
Let \(\M\) be an arbitrary von Neumann algebra and
let \(1\leq p<2\).
For every countable set \(A\subset L_p(\M)\), there
exists a \(\sigma\)-finite projection \(e\in\M\) such
that \(x=exe\) for every \(x\in A\).
Consequently, \(A\) may be regarded isometrically
as a subset of
\[
  L_p(e\M e)=eL_p(\M)e.
\]
\end{lemma}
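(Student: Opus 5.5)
The plan is to attach to each element of \(A\) its left and right supports, show that these supports are \(\sigma\)-finite, and take \(e\) to be their supremum. Let \(x\in L_p(\M)\) with \(1\le p<2\). We use the polar decomposition \(x=u|x|\) with \(u\in\M\). We write \(r(x)=u^*u\) for the right support and \(l(x)=uu^*\) for the left support; both are projections in \(\M\). We then have \(x=l(x)\,x\,r(x)\).

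The main step is to show that \(r(x)\) and \(l(x)\) are \(\sigma\)-finite projections. Set \(h=|x|^p\in L_1(\M)_+\). Under the identification \(L_1(\M)=\M_*\), \(h\) corresponds to a normal positive functional \(\omega_h(a)=\operatorname{tr}_{\M}(ha)\). The support projection \(s(h)\) of \(h\) coincides with \(r(x)\), since \(|x|\) and \(|x|^p\) have the same spectral support. This support equals the support of \(\omega_h\), and \(\omega_h\) is faithful on \(s(h)\M s(h)\). So \(r(x)\M r(x)\) carries a faithful normal positive functional and is therefore \(\sigma\)-finite. For \(l(x)\) we argue the same way with \(|x^*|^p\), or directly from \(l(x)=u\,r(x)\,u^*\sim r(x)\); equivalence of projections preserves \(\sigma\)-finiteness. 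The one point to verify is that the Haagerup support of an element of \(L_1(\M)_+\) equals the support of the corresponding normal functional. This follows from the construction in \cite[Section~1.2]{HaagerupJungeXu2010}: \(\operatorname{tr}_{\M}(ha)=0\) for a projection \(a\) if and only if \(h^{1/2}ah^{1/2}=0\).

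Next we use that a countable supremum of \(\sigma\)-finite projections is \(\sigma\)-finite. Enumerate \(A=\{x_k\}\) and choose faithful normal states \(\omega_k\) on the corners \(f_k\M f_k\), where \(f_k=l(x_k)\vee r(x_k)\). The supremum of two \(\sigma\)-finite projections is \(\sigma\)-finite, because \(f\vee g-f\sim g-f\wedge g\le g\) (Kaplansky's formula). Iterating this, and then taking \(\sum_k2^{-k}\omega_k\) composed with the corner maps, gives a normal functional \(\omega\) on \(\M\). Its support is \(e=\bigvee_k f_k\), so \(e\) is \(\sigma\)-finite. Alternatively, one can cite that a projection is \(\sigma\)-finite if and only if it does not dominate an uncountable family of mutually orthogonal nonzero projections; this property is preserved under countable suprema. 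Then \(x=l(x)xr(x)=exe\) for each \(x\in A\).

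Finally, the identification \(L_p(e\M e)=eL_p(\M)e\) is isometric; this is standard for Haagerup spaces of reduced algebras (e.g.\ \cite{HaagerupJungeXu2010}). For \(y=eye\), the modulus \(|y|\) lives in the corner and the trace functional restricts compatibly, so \(\|y\|_{L_p(e\M e)}=\|y\|_{L_p(\M)}\). The expected obstacle is only bookkeeping about supports in the type-free setting; the restriction \(p<2\) plays no role in the argument.
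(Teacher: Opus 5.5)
Your argument is correct, but it takes a different route from the paper. The paper does not work with supports. It applies \cite[Lemma~4.3]{RandrianantoaninaHaagerup2002} to the normalized bounded sequence \(x_n/(1+\|x_n\|_p)\) to obtain a \(\sigma\)-finite projection \(e\) with \(ex_n=x_ne=x_n\), and then quotes \cite[Remark~3.2]{HaagerupJungeXu2010} for \(L_p(e\M e)=eL_p(\M)e\).

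You instead prove the first half directly. The right support of \(x\) is \(s(|x|^p)\), and this is the support of the normal functional \(\operatorname{tr}_{\M}(|x|^p\,\cdot\,)\). Your faithfulness check \(\operatorname{tr}_{\M}(ha)=\|ah^{1/2}\|_2^2\) for projections \(a\) is exactly what this step needs. The left support is equivalent to the right support via \(u\). Finally, \(e=\bigvee_k f_k\) is the support of \(\sum_k 2^{-k}\omega_k(f_k\,\cdot\,f_k)\), hence \(\sigma\)-finite. That last functional already handles the countable supremum directly, so the Kaplansky-formula step for finite suprema is superfluous.

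Your route is self-contained and more general. It needs neither the external lemma nor the normalization trick, and since \(|x|^p\in L_1(\M)_+\) for every \(0<p<\infty\), it proves the statement for all such \(p\). This confirms your remark that the hypothesis \(1\le p<2\) plays no role. The paper's proof is shorter on the page, but all of its content sits in the cited reference. For the isometric identification of the corner, you and the paper rely on the same standard fact from \cite{HaagerupJungeXu2010}.
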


\begin{proof}
Write \(A=\{x_n:n\geq1\}\).
Applying
\cite[Lemma~4.3]{RandrianantoaninaHaagerup2002}
to the bounded sequence
\[
  \frac{x_n}{1+\|x_n\|_p}
  \qquad(n\geq1),
\]
we obtain a \(\sigma\)-finite projection \(e\in\M\)
such that
\[
  ex_n=x_ne=x_n
  \qquad(n\geq1).
\]
By \cite[Remark~3.2]{HaagerupJungeXu2010},
we have the isometric identification
\[
  L_p(e\M e)=eL_p(\M)e.
\]
This completes the proof.
\end{proof}

Combining 
\lemref{lem:four-corners} with
\lemref{lem:one-sided-witness} and
\corref{cor:biequi-weak-null}, we obtain the
following weakly null subsequence theorem.

\begin{theorem}
\label{thm:haagerup-weak-null}
Let \(\M\) be an arbitrary von Neumann algebra.  Every seminormalized
weakly null sequence in \(L_1(\M)=\M_*\) has a \(2\)-co-lacunary
subsequence.
\end{theorem}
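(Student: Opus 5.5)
Let \((x_n)\) be seminormalized and weakly null in \(L_1(\M)\), with \(\delta=\inf_n\|x_n\|_1>0\). The plan has three stages: move to a \(\sigma\)-finite corner, split each \(x_k\) by the four-corner decomposition, and then handle the resulting cases with the results already proved.

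\textbf{Reduction to a \(\sigma\)-finite corner.} By \lemref{lem:sigma-finite-corner} with \(p=1\), there is a \(\sigma\)-finite projection \(e\) such that every \(x_n\) lies in \(L_1(e\M e)=eL_1(\M)e\), isometrically. Each functional on \(L_1(e\M e)=(e\M e)_*\) is given by some \(b\in e\M e\subset\M\), so \(x_n\weakto0\) in \(L_1(e\M e)\) as well. A \(2\)-co-lacunary estimate in \(L_1(e\M e)\) is the same as one in \(L_1(\M)\). Hence we may assume \(\M\) is \(\sigma\)-finite.

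\textbf{Four-corner split.} Apply \lemref{lem:four-corners} and pass to a subsequence \((x_k)\) with pairwise orthogonal projections \(p_k\) and \(x_k=a_k+b_k+c_k+d_k\) as in \eqref{eq:four-corners}. Note that \(x_kp_k=b_k+d_k\) and \(p_kx_k=c_k+d_k\). We split into cases.

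\emph{Case 1:} \(\limsup_k\|x_kp_k\|_1>0\). Pass to a further subsequence on which \(\inf_k\|x_kp_k\|_1>0\). The subsequence is still weakly null, and the \(p_k\) are still orthogonal, so \lemref{lem:one-sided-witness} gives a \(2\)-co-lacunary subsequence.

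\emph{Case 2:} \(\limsup_k\|p_kx_k\|_1>0\). This is the same argument using the left-support version of \lemref{lem:one-sided-witness}.

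\emph{Case 3:} \(\|x_kp_k\|_1\to0\) and \(\|p_kx_k\|_1\to0\). Then
\[
\|x_k-a_k\|_1\le\|x_kp_k\|_1+\|p_kx_kp_k^\perp\|_1\le\|x_kp_k\|_1+\|p_kx_k\|_1\to0 .
\]
Pass to a subsequence with \(\sum_k\|x_k-a_k\|_1<\infty\). On this subsequence \((a_k)\) is still biequiintegrable, since a subset of a biequiintegrable set is biequiintegrable. It is weakly null, because it differs from a weakly null sequence by a norm-null one. It is seminormalized for large \(k\), because \(\|a_k\|_1\ge\delta-\|x_k-a_k\|_1\). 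By \corref{cor:biequi-weak-null}, \((a_k)\) has a \(2\)-co-lacunary subsequence \((a_{k_j})\). The summability \(\sum_j\|x_{k_j}-a_{k_j}\|_1<\infty\) still holds, so \lemref{lem:perturb} makes a tail of \((x_{k_j})\) \(2\)-co-lacunary. A tail of a subsequence is again a subsequence, which finishes this case.

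\textbf{Main obstacle.} The step needing most care is the \(\sigma\)-finite reduction. \corref{cor:biequi-weak-null} requires \(\sigma\)-finiteness, so the four-corner decomposition and the biequiintegrability must be taken inside \(e\M e\). Since we already work in \(L_1(e\M e)\) before applying \lemref{lem:four-corners}, the projections \(p_k\) lie in \(e\M e\) and biequiintegrability is relative to \(e\M e\), so this is consistent. Otherwise the proof is bookkeeping of subsequences.
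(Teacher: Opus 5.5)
Your proof is correct and follows the paper's route: reduction to a $\sigma$-finite corner, the four-corner decomposition, \lemref{lem:one-sided-witness} for the one-sided cases, and \corref{cor:biequi-weak-null} with \lemref{lem:perturb} for the biequiintegrable remainder. The one difference is that you split on $\|x_kp_k\|_1$ and $\|p_kx_k\|_1$ rather than on $\|b_k\|_1$ and $\|c_k\|_1$. Because $\|d_k\|_1\le\|x_kp_k\|_1$, this puts the diagonal corner $d_k$ into your Case~1, so you avoid the paper's appeal to \lemref{lem:weak-compact-compression} to show $\|d_k\|_1\to0$.
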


\begin{proof}
Set \(K=\{0,x_1,x_2,\ldots\}\), then \(K\) is relatively weakly compact.
By \lemref{lem:sigma-finite-corner}, there exists a
\(\sigma\)-finite projection \(e\in\M\) such that
\[
   K\subset L_1(e\M e).
\]
Then, we replace \(\M\) by \(e\M e\).

Apply \lemref{lem:four-corners} and relabel the
resulting subsequence.  Since the projections \(p_k\) are pairwise orthogonal,
\(p_k\to0\) strongly, and by
\cite[Lemma 2.5(i), p.~70]{Takesaki1979}, \(p_k\to0\) ultraweakly,
equivalently, weak-star in \(\M\). Since \(K\) is relatively weakly compact,
\lemref{lem:weak-compact-compression} yields
\begin{align}
 \|d_k\|_1\stackrel{\eqref{eq:four-corners}}{=}\|p_kx_kp_k\|_1\stackrel{\eqref{eq:typefree-diagonal-equi}}{\longrightarrow}0.
 \label{eq:d-vanishes}
\end{align}

If \(\limsup_k\|b_k\|_1>0\), there exists a subsequence \((x_{k_i})\) and relabel it as \(((x_{k})\), so that,
we have that
\[
 \|x_kp_k\|_1\geq\|p_k^\perp x_kp_k\|_1=\|b_k\|_1.
\]
By \lemref{lem:one-sided-witness}, \(\{x_k\}\) has a \(2\)-co-lacunary
subsequence.  Similarly, if
\(\limsup_k\|c_k\|_1>0\), \(\{x_k\}\) has a \(2\)-co-lacunary
subsequence also.

It remains to consider the case in which \(\|b_k\|_1\to0\) and \(\|c_k\|_1\to0\).  Together with
\eqref{eq:d-vanishes}, we have that
\[
 \|x_k-a_k\|_1\le \|d_k\|_1+\|b_k\|_1+\|c_k\|_1\longrightarrow0 \text{ as }k\to \infty.
\]
Put 
\[\delta_0=\inf_k\|x_k\|_1>0.\]
After discarding finitely
many terms, we may assume that
\[\|x_k-a_k\|_1<\frac{\delta_0}{2}\]
for every \(k\), and hence
\[
\inf_k\|a_k\|_1>0.
\]
Hence, the sequence \((a_k)\) is weakly null and seminormalized, and it is
biequiintegrable by \eqref{eq:four-corners}. By 
\corref{cor:biequi-weak-null}, \((a_k)\) has a \(2\)-co-lacunary subsequence
\((a_{k_j})\).  Since \(\|x_{k_j}-a_{k_j}\|_1\to0\), we may pass to a
further subsequence and relabel the corresponding pairs as
\((x_k,a_k)\) so that
\[
 \sum_k\|x_k-a_k\|_1<\infty.
\]
Then by \lemref{lem:perturb}, a tail of \((x_k)\), and hence a
subsequence of the original sequence, is \(2\)-co-lacunary. This completes the proof.
\end{proof}

Combining Rosenthal's \(\ell_1\)-theorem \cite{Rosenthal1974}
with the weak sequential completeness of \(\M_*\)
\cite[Chapter~III, Corollary~5.2]{Takesaki1979},
we are now ready to prove the main theorem of this paper.

\begin{proof}
[Proof of \thmref{thm:main}]
Let \((y_n)\) be bounded in \(L_1(\M)=\M_*\), and suppose it has no
norm-convergent subsequence.  By Rosenthal's \(\ell_1\)-theorem \cite{Rosenthal1974}, one of two
alternatives occurs.

If a subsequence is equivalent to the unit vector basis of \(\ell_1\),
then for some \(\gamma>0\),
\[
 \left\|\sum_j a_jy_{n_j}\right\|_1
 \geq\gamma\sum_j|a_j|
 \geq\gamma\left(\sum_j|a_j|^2\right)^{1/2}.
\]
That subsequence is already \(2\)-co-lacunary.

Otherwise, \((y_n)\) has a weak-Cauchy subsequence. After passing to
this subsequence and relabeling, by 
\cite[Chapter~III, Corollary~5.2]{Takesaki1979}, there exists \(y\in L_1(\M)\) such that
\[
y_n \rightharpoonup y.
\]
Setting \(f_n=y_n-y\), we have that
\[
 f_n=y_n-y\weakto0.
\]
Since \((y_n)\) is bounded and \(y\)
is fixed,
the sequence \((f_n)\) is bounded.
If
\[
 \liminf_n\|f_n\|_1=0,
\]
a further subsequence of \((y_n)\) converges in norm to \(y\), contrary to the hypothesis.
After discarding finitely many terms, we may therefore assume that
\[
  \inf_n\|f_n\|_1>0.
\]
By \thmref{thm:haagerup-weak-null}, \((f_n)\) has a \(2\)-co-lacunary
subsequence \((f_{n_i})\). Then, by \lemref{lem:translation},
\((y_{n_i})=(f_{n_i}+y)\) is \(2\)-co-lacunary also. Hence, \((y_n)\) has a \(2\)-co-lacunary
subsequence.

Since the two Rosenthal alternatives
exhaust all bounded sequences, the proof is complete.
\end{proof}

\section{Structural consequences}\label{sec:consequences}

The results in this section are applications of \thmref{thm:main}. Throughout the
section, operators are bounded and linear.

We begin with a characterization of relatively norm compact subsets of
von Neumann algebra preduals and its consequence for compact operators.

\begin{proposition}\label{prop:norm-compactness}
Let \(\M\) be an arbitrary von Neumann algebra and let \(K\) be a bounded subset of
\(\M_*\).  Then \(K\) is relatively norm compact if and only if it contains no
\(2\)-co-lacunary sequence.

Consequently, if \(K\) is relatively weakly compact but not relatively
norm compact, then it contains a weakly convergent \(2\)-co-lacunary sequence.
Moreover, for every Banach space \(X\) and every bounded operator
\(T:X\to\M_*\),
\begin{align}
 T\text{ is compact}
 \quad\Longleftrightarrow\quad
 T(B_X)\text{ contains no \(2\)-co-lacunary sequence}.
 \label{eq:operator-compactness}
\end{align}
\end{proposition}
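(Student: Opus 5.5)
The plan is to derive everything from \thmref{thm:main} together with the elementary fact that a \(2\)-co-lacunary sequence has no norm-convergent subsequence.

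\emph{Easy direction.} Suppose \(K\) is relatively norm compact and contains a \(2\)-co-lacunary sequence \((x_n)\) with constant \(\gamma\). Take \(a_n=1\), \(a_m=-1\) and all other coefficients zero. Then
\[
\|x_n-x_m\|_1\ge\gamma\sqrt2\qquad(n\neq m).
\]
So no subsequence of \((x_n)\) is Cauchy, which contradicts compactness of \(\overline K\).

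\emph{Converse.} Suppose \(K\) is bounded and not relatively norm compact. Since \(\M_*\) is complete, \(K\) is not totally bounded. Hence there are \(\varepsilon>0\) and a sequence \((y_n)\subset K\) with \(\|y_n-y_m\|_1\ge\varepsilon\) for \(n\ne m\). Such a sequence has no norm-convergent subsequence. By \thmref{thm:main}, \((y_n)\) therefore has a \(2\)-co-lacunary subsequence, and this subsequence lies in \(K\).

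\emph{Consequence for relatively weakly compact \(K\).} Take \(K\) relatively weakly compact but not relatively norm compact, and choose the separated sequence \((y_n)\) as above. By Eberlein--\v{S}mulian, after passing to a subsequence we may assume \(y_n\weakto y\). A \(2\)-co-lacunary subsequence of this sequence then exists by the previous paragraph. It is still weakly convergent to \(y\), being a subsequence of a weakly convergent sequence.

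\emph{Operator statement.} Let \(T:X\to\M_*\) be bounded. Then \(T\) is compact exactly when \(T(B_X)\) is relatively norm compact. The set \(T(B_X)\) is bounded, so the first part applied to \(K=T(B_X)\) gives \eqref{eq:operator-compactness} directly.

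There is no real obstacle here. The only points needing care are, first, that "not relatively compact" yields a separated sequence with no norm-convergent subsequence, as needed to invoke \thmref{thm:main}. The second is that the weak limit survives passage to subsequences.
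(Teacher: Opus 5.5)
Your proposal is correct and follows essentially the same route as the paper: the separation estimate \(\|x_n-x_m\|\ge\sqrt2\,\gamma\) for the easy direction, \thmref{thm:main} for the converse, Eberlein--\v{S}mulian for the weakly compact case, and the characterization of compact operators via relative compactness of \(T(B_X)\). The only differences are that you spell out the separated sequence witnessing that \(K\) is not totally bounded, and you extract the weakly convergent subsequence before the \(2\)-co-lacunary one rather than after; both orders are valid.
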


\begin{proof}
If \((x_n)\subset K\) is \(2\)-co-lacunary with lower constant \(\gamma\),
then
\[
 \|x_n-x_m\|\geq \sqrt2\,\gamma\qquad(n\neq m).
\]
Thus a relatively norm compact set cannot contain a \(2\)-co-lacunary sequence.  Conversely,
if \(K\) is not relatively norm compact, 
there exists a sequence
\((x_n)\subset K\) having no norm-convergent subsequence in
\(\M_*=L_1(\M)\) and 
\thmref{thm:main} gives a \(2\)-co-lacunary subsequence.

Moreover, when \(K\) is relatively weakly compact but not relatively
norm compact, there exists a sequence
\((x_n)\subset K\) which has a \(2\)-co-lacunary subsequence.
By Eberlein--\v Smulian theorem \cite[Chapter~III, p.~18]{Diestel1984}, there exists a further weakly convergent subsequence of the \(2\)-co-lacunary subsequence, which remains \(2\)-co-lacunary.

Finally, it follows from 
\[
T\text{ is compact}
\quad\Longleftrightarrow\quad
T(B_X)\text{ is relatively norm compact}
\]
that
\eqref{eq:operator-compactness} holds.
\end{proof}




Recall that an operator is
Dunford--Pettis, or completely continuous, if it sends weakly null
sequences to norm-null sequences.

We next apply the subsequence principle to
Dunford--Pettis operators. The following proposition
gives two characterizations for operators whose
domain or range is a von Neumann algebra predual.

\begin{proposition}\label{prop:DP-tests}
Let \(\M\) be an arbitrary von Neumann algebra.  Let \(X\) and
\(Y\) be Banach spaces.
\begin{enumerate}
\item A bounded linear operator \(T:X\to\M_*\) is Dunford--Pettis if and only if, for every
weakly null sequence \((x_n)\) in \(X\), the sequence \((Tx_n)\) has no
\(2\)-co-lacunary subsequence.
\item A bounded linear operator \(S:\M_*\to Y\) is Dunford--Pettis if and only if
\(\|Sx_n\|\to0\) for every weakly null \(2\)-co-lacunary sequence
\((x_n)\) in \(\M_*\).
\end{enumerate}
\end{proposition}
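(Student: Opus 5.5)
For (1), the forward implication is immediate. If \(T\) is Dunford--Pettis and \((x_n)\) is weakly null, then \(\|Tx_n\|\to0\). A \(2\)-co-lacunary sequence with lower constant \(\gamma\) satisfies \(\|u_n\|\geq\gamma\) for every \(n\), by taking \(a\) to be a unit vector in \eqref{eq:2colac}. So no subsequence of \((Tx_n)\) can be \(2\)-co-lacunary.

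For the converse, suppose \(T\) is not Dunford--Pettis. Then there are a weakly null \((x_n)\subset X\) and a subsequence with \(\inf_n\|Tx_n\|>0\). The sequence \((Tx_n)\) is bounded, since weakly null sequences are bounded and \(T\) is bounded. It is weakly null in \(\M_*\), because \(T\) is weak-to-weak continuous. It is therefore seminormalized and weakly null, and \thmref{thm:haagerup-weak-null} yields a \(2\)-co-lacunary subsequence of \((Tx_n)\). This contradicts the hypothesis. One could also argue through \thmref{thm:main}: a norm-convergent subsequence would have to converge to the weak limit \(0\), which is impossible since \(\inf_n\|Tx_n\|>0\).

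For (2), the forward implication is trivial, because a weakly null \(2\)-co-lacunary sequence is in particular weakly null. For the converse, assume \(S\) is not Dunford--Pettis. Pick a weakly null \((x_n)\subset\M_*\) with \(\|Sx_n\|\geq\delta>0\) for all \(n\) after passing to a subsequence. Then \(\|x_n\|\geq\delta/\|S\|\), so \((x_n)\) is seminormalized and weakly null. By \thmref{thm:haagerup-weak-null} it has a \(2\)-co-lacunary subsequence \((x_{n_k})\), which is still weakly null. By hypothesis \(\|Sx_{n_k}\|\to0\), which contradicts \(\|Sx_{n_k}\|\geq\delta\).

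The argument has no real obstacle. The one point to check is that passing to subsequences preserves weak nullity and the lower bound, and both facts are immediate.
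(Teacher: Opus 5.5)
Your proposal is correct and follows essentially the same route as the paper. The easy directions use the lower bound \(\|u_n\|\geq\gamma\) for \(2\)-co-lacunary sequences (or, for (2), the definition directly). For the converses you pass to a seminormalized weakly null subsequence, via weak-to-weak continuity of \(T\) in (1) and the bound \(\|x_n\|\geq\delta/\|S\|\) in (2), and then invoke \thmref{thm:haagerup-weak-null}.
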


\begin{proof}
For a \(2\)-co-lacunary sequence \((y_n)\), there exists \(\gamma>0\) such that
\[
\|y_n\|\geq\gamma.
\]
Hence, the sufficiency of the two implications is holds.
Then we prove the necessity of the two implications.

If \(T\) is not
Dunford--Pettis, there exists a weakly null sequence \((x_n)\) such that 
\[\lim_{n\to\infty}\|Tx_n\|>0.\] 
Then, we may choose a seminormalize subsequence \((Tx_{n_i})\) of \((Tx_n)\). By \cite[Theorem~3.10, p.~61]{Brezis2011}, \(T\) is weak-to-weak continuous. Hence,
\((Tx_{n_i})\) is a bounded weakly null sequence in \(\M_*\) and \thmref{thm:haagerup-weak-null} yields a \(2\)-co-lacunary subsequence of \((Tx_{n_i})\), also a  \(2\)-co-lacunary subsequence of \((Tx_{n})\).

Likewise, if \(S\) is not Dunford--Pettis, choose a weakly null sequence
\((x_n)\subset\M_*\), relabeling and a subsequence of \((x_n)\subset\M_*\),  such that
\[\lim_{n\to\infty}\|Sx_n\|\geq\alpha,\]
for a fixed \(\alpha\).
Then, we may choose a seminormalize subsequence \((Sx_{n_i})\) of \((Sx_n)\) and \(\alpha>0\) such that
\begin{align}
\|Sx_{n_i}\|>\alpha.\label{eq:Sx_{n_i}}
\end{align}
Then we replace \(x_{n_i}\) by \((x_n)\).
Since \(S\) is a bounded linear operator, 
\[
\|x_{n}\|>\frac{\alpha}{\|S\|_{\M_*\to Y}}.
\]
Since \((x_n)\) is bounded,
\((x_{n})\) is seminormalized. By 
\thmref{thm:haagerup-weak-null}, \((x_{n})\) has a \(2\)-co-lacunary subsequence \((x_{n_k})\). Then, it follows from \eqref{eq:Sx_{n_i}} that
\[
\lim_{n\to\infty}\|Sx_{n_k}\|\ge\alpha.
\]
This completes the proof.
\end{proof}

Recall that \(T:X\to Y\) is \(\ell_2\)-strictly singular if its
restriction to no subspace of \(X\) isomorphic to \(\ell_2\) is an
isomorphic embedding.

Huang, Pliev, and Sukochev
\cite[Corollary~3.8]{HuangPlievSukochev2022}
proved that every \(\ell_2\)-strictly singular
operator between certain hyperfinite von Neumann
algebra preduals is Dunford--Pettis.
We extend this result to arbitrary von Neumann
algebra preduals and obtain an equivalence between
the Dunford--Pettis property and
\(\ell_2\)-strict singularity
(\propref{prop:dp-l2ss-arbitrary}).

\begin{proposition}\label{prop:dp-l2ss-arbitrary}
Let \(\M\) and \(\N\) be arbitrary von Neumann algebras,
and let \(T:\M_*\to\N_*\) be a bounded linear operator.
Then
\[
  T\text{ is Dunford--Pettis}
  \quad\Longleftrightarrow\quad
  T\text{ is }\ell_2\text{-strictly singular}.
\]
Equivalently, every non-Dunford--Pettis operator
\(T\) restricts to an isomorphic embedding on some
closed subspace of \(\M_*\) isomorphic to \(\ell_2\).
\end{proposition}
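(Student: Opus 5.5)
We prove both implications directly, using \propref{prop:DP-tests} and \thmref{thm:haagerup-weak-null} for the nontrivial direction.

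\emph{Dunford--Pettis implies \(\ell_2\)-strictly singular.} Suppose \(T\) is Dunford--Pettis and \(E\subset\M_*\) is a closed subspace isomorphic to \(\ell_2\) on which \(T\) is an isomorphic embedding. Let \((e_n)\subset E\) be the image of the unit vector basis. Since \(\ell_2\) is reflexive, \((e_n)\) is weakly null in \(E\), and hence weakly null in \(\M_*\) by Hahn--Banach. The Dunford--Pettis property then gives \(\|Te_n\|\to0\). This contradicts \(\|Te_n\|\ge c\|e_n\|\ge c'>0\).

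\emph{Non-Dunford--Pettis implies not \(\ell_2\)-strictly singular.} Suppose \(T\) is not Dunford--Pettis. Choose a weakly null \((x_n)\subset\M_*\) with \(\|Tx_n\|\ge\alpha>0\).
\begin{enumerate}
\item The sequence \((x_n)\) is bounded, and \(\|x_n\|\ge\alpha/\|T\|\), so it is seminormalized.
\item The sequence \((Tx_n)\) is weakly null in \(\N_*\), since \(T\) is weak-to-weak continuous, and it is seminormalized.
\item Apply \thmref{thm:haagerup-weak-null} in \(\N_*\) to obtain a \(2\)-co-lacunary subsequence \((Tx_{n_k})\) with constant \(\gamma>0\).
\end{enumerate}
The key step is an upper \(\ell_2\)-estimate on the domain side:
\begin{equation*}
\Bigl\|\sum_k a_kx_{n_k}\Bigr\|_1\le C\Bigl(\sum_k|a_k|^2\Bigr)^{1/2}.
\end{equation*}
Once this holds, the chain
\begin{equation*}
\gamma\|a\|_2\le\Bigl\|T\sum_k a_kx_{n_k}\Bigr\|\le\|T\|C\|a\|_2
\end{equation*}
shows three things. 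First, \((x_{n_k})\) is equivalent to the \(\ell_2\) basis. Second, its closed span \(E\) is isomorphic to \(\ell_2\). Third, \(T|_E\) is an isomorphic embedding.

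\emph{Obtaining the upper estimate.} This is the main obstacle. By \lemref{lem:sigma-finite-corner} we may assume \(\M\) is \(\sigma\)-finite. We then apply \lemref{lem:four-corners} to \((x_n)\).
\begin{enumerate}
\item The diagonal parts \(d_k\) tend to zero in norm by \lemref{lem:weak-compact-compression}. Passing to a subsequence with \(\sum\|d_k\|<\infty\) makes them negligible for upper estimates.
\item The biequiintegrable part \((a_k)\) is handled by \lemref{lem:uniform-symmetric-factorization}. Pass to a subsequence making it a small perturbation of \(D^{1/2}m_kD^{1/2}\) with \(\|m_k\|_\infty\le C_\varepsilon\).
\item The terms \(D^{1/2}m_kD^{1/2}\) lie in a bounded subset of \(L_2\) norms via \(L_2\)-type structures. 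Weak nullness with a gliding-hump extraction should give an almost orthogonal subsequence in a Hilbertian norm dominating \(\|\cdot\|_1\). Concretely, write \(D^{1/2}m_kD^{1/2}=D^{1/2}\cdot(m_kD^{1/2})\) with \(m_kD^{1/2}\in L_2\) bounded and weakly null. After passing to a subsequence, it is almost orthogonal, so \(\|\sum a_kD^{1/2}m_k D^{1/2}\|_1\le\|D^{1/2}\|_2\,\|\sum a_k m_kD^{1/2}\|_2\lesssim\|a\|_2\).
\item The one-sided parts \(b_k=p_k^\perp x_kp_k\) have right supports in the orthogonal \(p_k\). The \(L_1\) column/row estimate \(\|\sum_k y_k\|_1\le\|(\sum y_k^*y_k)^{1/2}\|_1\)-type bounds via Hölder, combined with left equiintegrability approximating \(b_k\approx h\,u_k\) with \(u_k\) contractions with orthogonal right supports, give \(\|\sum a_kb_k\|_1\le\|h\|_1\|\sum a_ku_kp_k\|_\infty\lesssim\|a\|_2\). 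This uses orthogonality exactly as in \eqref{eq:witness-row-bound}. The parts \(c_k\) are handled symmetrically.
\end{enumerate}

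The approximation errors in the equiintegrable approximations are not summable for a fixed \(\varepsilon\). The estimate would therefore be arranged as \(\|\sum a_k x_{n_k}\|\le C\|a\|_2+\varepsilon\|a\|_1\). This is the delicate point. If it fails, I would instead route through the proof of \thmref{thm:haagerup-weak-null} and extract the upper \(\ell_2\)-estimate branch by branch, using the specific witnesses constructed there.
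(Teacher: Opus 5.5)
Your first direction is fine. The gap is in the converse. The step you call key is an upper \(\ell_2\)-estimate \(\|\sum_k a_kx_{n_k}\|_1\le C\|a\|_2\) for a subsequence of the given weakly null sequence, and it is false in general. So neither the four-corner analysis nor the branch-by-branch fallback can establish it.

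Here is a counterexample. Take \(\M=\N=L_\infty(0,1)\), \(T=\mathrm{id}\), and let \((x_n)\) be i.i.d.\ symmetric \(p\)-stable random variables with \(1<p<2\). Then \(\|\sum_k a_kx_k\|_1=\|a\|_{\ell_p}\|x_1\|_1\). Hence \((x_n)\) is weakly null, since its span is a copy of the reflexive space \(\ell_p\), and \(\|Tx_n\|_1\) is constant. Yet every subsequence satisfies \(\|\sum_{k=1}^m x_{n_k}\|_1\asymp m^{1/p}\gg m^{1/2}\).

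You saw the symptom yourself: with non-summable approximation errors you only get \(C\|a\|_2+\varepsilon\|a\|_1\). That bound cannot be combined with \(\gamma\|a\|_2\le\|T\|\,\|\sum_k a_kx_{n_k}\|_1\) to say anything about the span of \((x_{n_k})\).

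The missing idea is that the \(\ell_2\)-subspace should be spanned by Hilbertian approximants of the \(x_n\), not by the \(x_n\) themselves. The paper proceeds as follows.
\begin{enumerate}
\item The set \(\{0\}\cup\{x_n\}\) is weakly compact, hence \(1\)-equiintegrable by \cite[Theorem~4.14]{RaynaudXu2003}. Then \lemref{lem:equi-approx}(ii) gives \(\|x_n-Fq_n\|_1<\varepsilon\). Here \(F(\xi,\zeta)=C_\varepsilon(\xi D^{1/2}+D^{1/2}\zeta)\) is bounded from \(H=L_2\oplus_2L_2\) into \(L_1\), and \((q_n)\) is bounded in \(H\).
\item The \(q_n\) need not be weakly null. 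The same issue affects your item asserting, without justification, that \(m_kD^{1/2}\) is weakly null. The paper passes to a subsequence with \(q_n\weakto q\) and uses weak lower semicontinuity to get \(\|Fq\|_1\le\varepsilon\).
\item Put \(y_n=F(q_n-q)\). Then \(y_n\) is weakly null and \(2\varepsilon\)-close to \(x_n\), and almost orthogonality in \(H\) gives a subsequence with an upper \(\ell_2\)-estimate.
\item Choosing \(\varepsilon<\delta/(4\|T\|)\) keeps \(\|Ty_n\|_1>\delta/2\). So \thmref{thm:haagerup-weak-null} is applied to \((Ty_n)\), not \((Tx_n)\), giving the lower estimate.
\end{enumerate}
It follows that \(T\) is an isomorphic embedding on \(\overline{\operatorname{span}}\{y_{n_k}\}\cong\ell_2\). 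The four-corner decomposition is not needed for this argument.
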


\begin{proof}
We identify \(L_1(\M)=\M_*\) and \(L_1(\N)=\N_*\)
isometrically.

Every Dunford--Pettis operator is
\(\ell_2\)-strictly singular, since its
restriction to any reflexive subspace is compact.

Conversely, suppose that \(T\) is not
Dunford--Pettis. After passing to a subsequence,
there exist a bounded weakly null sequence
\((x_n)\subset L_1(\M)\) and \(\delta>0\) such that
\[
  \|Tx_n\|_1\geq\delta
  \qquad(n\geq1).
\]
By \lemref{lem:sigma-finite-corner}, there exists
a \(\sigma\)-finite projection \(e\in\M\) such that
\(x_n=ex_ne\) for every \(n\).
Put \(\mathcal R=e\M e\) and
\[
  S=T|_{L_1(\mathcal R)}.
\]
Since \(L_1(\mathcal R)=eL_1(\M)e\) is a
closed subspace of \(L_1(\M)\), the sequence
\((x_n)\) remains weakly null in
\(L_1(\mathcal R)\).
 Moreover,
\begin{align}\label{Sx_n ge delta}
  \|Sx_n\|_1\geq\delta
  \qquad(n\geq1).
\end{align}

Fix 
\begin{align}\label{varrpsilon<}
0<\varepsilon<\delta/(4\|S\|).
\end{align}
Let \(D\in L_1(\mathcal R)_+\) be the Haagerup
density of a normal faithful state on
\(\mathcal R\). Then
\[
  s(D)=e,\qquad
  \|D\|_1=\|D^{1/2}\|_2^2=1.
\]
The set
\[
  K=\{0\}\cup\{x_n:n\geq1\}
\]
is weakly compact in \(L_1(\mathcal R)\).
By \cite[Theorem~4.14]{RaynaudXu2003},
\(K\) is \(1\)-equiintegrable. Applying \lemref{lem:equi-approx}(ii),
we obtain \(C_\varepsilon>0\) and contractions
\(a_n,b_n\in\mathcal R\) satisfying
\begin{align}
  \|x_n-C_\varepsilon(a_nD+Db_n)\|_1
  <\varepsilon
  \qquad(n\geq1).
  \label{eq:dp-sigma-additive-approximation}
\end{align}

Consider the Hilbert space
\[
  H=L_2(\mathcal R)\oplus_2L_2(\mathcal R)
\]
and the bounded linear operator
\[
  F:H\longrightarrow L_1(\mathcal R),
  \qquad
  F(\xi,\zeta)
  =C_\varepsilon
   \bigl(\xi D^{1/2}+D^{1/2}\zeta\bigr).
\]
By the noncommutative H\"older inequality
\cite[Section~1]{RaynaudXu2003},
\[
  \|F\|_{H\to L_1(\mathcal R)}
  \leq\sqrt2\,C_\varepsilon,
\]
where \(\|\cdot\|_{H\to L_1(\mathcal R)}\)
denotes the operator norm.
Set
\[
  q_n=(a_nD^{1/2},D^{1/2}b_n)\in H.
\]
Since \(a_n,b_n\) are contractions,
\[
  \|q_n\|_H\leq\sqrt2.
\]
It follows from \eqref{eq:dp-sigma-additive-approximation} that
\[
  \|x_n-Fq_n\|_1<\varepsilon.
\]

Since \(H\) is reflexive, the
Eberlein--\v Smulian theorem
\cite[Chapter~III]{Diestel1984}
allows us to pass to a subsequence such that
\[
  q_n\rightharpoonup q
  \qquad\text{in }H
\]
for some \(q\in H\).
Since \(x_n\rightharpoonup0\) and \(F\) is bounded,
we have that
\[x_n-Fq_n\rightharpoonup-Fq.\]
By weak lower semicontinuity of the norm,
\begin{align}
  \|Fq\|_1
  \leq\liminf_n\|x_n-Fq_n\|_1
  \leq\varepsilon.
  \label{eq:Fq-norm-bound}
\end{align}
Put
\[
  r_n=q_n-q,\qquad y_n=Fr_n.
\]
Then \((r_n)\) and \((y_n)\) are bounded and
weakly null in \(H\) and \(L_1(\mathcal R)\),
respectively. Moreover,
\begin{align}  \label{eq:dp-centered-image-bound}
  \|x_n-y_n\|_1
  &\leq\|x_n-Fq_n\|_1+\|Fq\|_1
  \stackrel{\eqref{eq:dp-sigma-additive-approximation},
  \eqref{eq:Fq-norm-bound}}{<}2\varepsilon,
  \\
  \|Sy_n\|_1
  &\geq\|Sx_n\|_1-\|S\|\|x_n-y_n\|_1
  \stackrel{\eqref{Sx_n ge delta},\eqref{varrpsilon<}}{>}\frac{\delta}{2}.
\nonumber
\end{align}
Since \((r_n)\) is bounded and weakly null
in the Hilbert space \(H\), we may recursively
choose a subsequence \((r_{n_k})\) such that
\[
  |\langle r_{n_i},r_{n_j}\rangle_H|
  \leq2^{-i-j}
  \qquad(i<j).
\]
Writing \(B=\sup_n\|r_n\|_H\), we obtain that 
\begin{align*}
  \left\|\sum_k c_kr_{n_k}\right\|_H^2
  \leq B^2\sum_k|c_k|^2
       +2\sum_{i<j}2^{-i-j}|c_i||c_j|
  \le
       \left(B^2+\frac13\right)
       \sum_k|c_k|^2
\end{align*}
for every finitely supported scalar sequence
\((c_k)\).
Applying \(F\) and relabeling the subsequence,
we obtain \(C_0>0\) such that
\begin{align}
  \left\|\sum_n c_ny_n\right\|_1
  \leq C_0
  \left(\sum_n|c_n|^2\right)^{1/2}.
  \label{eq:dp-sigma-upper-l2}
\end{align}

By \eqref{eq:dp-centered-image-bound},
\((Sy_n)\) is a seminormalized weakly null
sequence in \(L_1(\N)\).
Applying \thmref{thm:haagerup-weak-null}
and passing to a further subsequence,
we obtain \(\gamma>0\) such that
\begin{align}
  \left\|\sum_n c_nSy_n\right\|_1
  \geq\gamma
  \left(\sum_n|c_n|^2\right)^{1/2}
  \label{eq:dp-sigma-lower-image}
\end{align}
for every finitely supported scalar sequence
\((c_n)\).

Combining \eqref{eq:dp-sigma-upper-l2} and
\eqref{eq:dp-sigma-lower-image}, we have
\[
  \frac{\gamma}{\|S\|}
  \left(\sum_n|c_n|^2\right)^{1/2}
  \leq
  \left\|\sum_n c_ny_n\right\|_1
  \leq
  C_0\left(\sum_n|c_n|^2\right)^{1/2}.
\]
Consequently,
\[
  Y=\overline{\operatorname{span}}
    \{y_n:n\geq1\}
  \cong\ell_2.
\]
Furthermore, for every finite linear combination
\(y=\sum_n c_ny_n\),
\[
  \|Sy\|_1
  \geq\gamma\left(\sum_n|c_n|^2\right)^{1/2}
  \geq\frac{\gamma}{C_0}\|y\|_1.
\]
By continuity, this inequality extends to
every \(y\in Y\).
Thus \(S|_Y\), and hence \(T|_Y\), is an
isomorphic embedding. Therefore, \(T\) is not
\(\ell_2\)-strictly singular, completing the proof.
\end{proof}



We recall the notion of a spreading model
(see \cite[Definition~1.1]{OdellSchlumprecht1998}).
A sequence \((e_i)\) is called a spreading model
generated by a sequence \((x_n)\) if, after
passing to a suitable subsequence of \((x_n)\),
for every \(m\geq1\) and all scalars
\(a_1,\ldots,a_m\),
\[
    \left\|\sum_{i=1}^m a_i e_i\right\|
    =
    \lim_{\substack{n_1<\cdots<n_m\\ n_1\to\infty}}
    \left\|\sum_{i=1}^m a_i x_{n_i}\right\|.
\]
We say that \((e_i)\) dominates the unit vector
basis of \(\ell_2\) if there exists a constant
\(C>0\) such that
\[
    \left(\sum_{i=1}^m |a_i|^2\right)^{1/2}
    \leq
    C\left\|\sum_{i=1}^m a_i e_i\right\|
\]
for every \(m\geq1\) and all scalars
\(a_1,\ldots,a_m\)
(see \cite[Definition~3.1]{AndroulakisEtAl2005}).

We next derive some consequences of \thmref{thm:main}
for subspaces of von Neumann algebra preduals
and their spreading models.


\begin{corollary}
\label{cor:subspaces-spreading}
Let \(\M\) be an arbitrary von Neumann algebra.
Let \(X\) be isomorphic to a closed subspace of \(\M_*\).
\begin{enumerate}
\item Every seminormalized weakly null sequence in \(X\) has a
\(2\)-co-lacunary subsequence.
\item Every such infinite-dimensional \(X\) contains a basic
\(2\)-co-lacunary sequence.
\item \rev{Every spreading model generated by a seminormalized weakly null sequence in
\(X\) dominates the unit vector basis of \(\ell_2\).}
\end{enumerate}
\end{corollary}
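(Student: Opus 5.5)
Let \(J:X\to\M_*\) be an isomorphic embedding onto a closed subspace, with \(\|x\|\le\|J\|\,\|J^{-1}\|\,\|x\|\)-type equivalence constants \(a\|x\|\leq\|Jx\|_1\leq b\|x\|\). All three assertions will be reduced to \thmref{thm:haagerup-weak-null} (or \thmref{thm:main}) through \(J\).

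\emph{Part (1).} If \((x_n)\) is seminormalized and weakly null in \(X\), then \((Jx_n)\) is weakly null in \(\M_*\), since \(J\) is weak-to-weak continuous. It is also seminormalized, because \(a\|x_n\|\leq\|Jx_n\|_1\leq b\|x_n\|\). By \thmref{thm:haagerup-weak-null} there is a subsequence with \(\|\sum a_kJx_{n_k}\|_1\geq\gamma(\sum|a_k|^2)^{1/2}\). Then
\[
\Bigl\|\sum a_kx_{n_k}\Bigr\|\geq b^{-1}\gamma\Bigl(\sum|a_k|^2\Bigr)^{1/2},
\]
so \((x_{n_k})\) is \(2\)-co-lacunary in \(X\).

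\emph{Part (2).} Let \(X\) be infinite-dimensional. The plan is to produce a seminormalized weakly null sequence in \(X\) and then combine (1) with a basic-subsequence argument. Since \(\M_*\) is weakly sequentially complete, so is \(X\). By Rosenthal's \(\ell_1\)-theorem, either \(X\) contains a copy of \(\ell_1\), or every bounded sequence has a weakly Cauchy subsequence.

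In the first case the unit vector basis of that \(\ell_1\) copy is already basic and \(2\)-co-lacunary, as in the proof of \thmref{thm:main}.

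In the second case, \(X\) contains no \(\ell_1\), so its unit ball is relatively weakly sequentially compact. Weak sequential completeness together with Eberlein--\v Smulian then makes \(X\) reflexive. Hence there is a normalized weakly null sequence in \(X\); for instance, take a normalized basic sequence, which exists by Mazur's theorem, and use that in a reflexive space every normalized basic sequence is shrinking. By the Bessaga--Pe\l czy\'nski selection principle, pass to a basic subsequence, and then apply (1) to extract a further subsequence that is \(2\)-co-lacunary. A subsequence of a basic sequence remains basic.

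The main obstacle is this case split, namely guaranteeing a seminormalized weakly null sequence. A simpler route would be to pick any normalized basic sequence in \(X\) and apply \thmref{thm:main} to it. A basic normalized sequence has no norm-convergent subsequence, because its terms are uniformly separated by the basis constant. Pulling the \(2\)-co-lacunary subsequence back through \(J\) then works directly, which avoids the weak-nullness issue entirely. I would use this direct route as the primary argument.

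\emph{Part (3).} Let \((e_i)\) be a spreading model generated by a seminormalized weakly null \((x_n)\). By (1), some subsequence \((x_{n_k})\) is \(2\)-co-lacunary with constant \(\gamma\). The spreading-model limit along further subsequences of the subsequence realizing \((e_i)\) is unchanged, and a subsequence of a \(2\)-co-lacunary sequence remains \(2\)-co-lacunary with the same constant. So I first pass to a common subsequence on which both properties hold, choosing the \(2\)-co-lacunary subsequence inside the subsequence that defines the spreading model. Every finite combination \(\sum_{i=1}^m a_ix_{n_i}\) along it has norm at least \(\gamma(\sum|a_i|^2)^{1/2}\), and taking the limit gives \(\|\sum a_ie_i\|\geq\gamma(\sum|a_i|^2)^{1/2}\). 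Thus \((e_i)\) dominates the \(\ell_2\) basis with \(C=\gamma^{-1}\).
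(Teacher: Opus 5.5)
Your proposal is correct and follows the paper's proof essentially verbatim: (1) transfers \thmref{thm:haagerup-weak-null} through the embedding $J$; (2) applies \thmref{thm:main} to the image of a normalized basic sequence, which is the direct route you adopt (your Rosenthal/reflexivity case split is also valid but unnecessary); and (3) passes the lower $\ell_2$-estimate of a $2$-co-lacunary subsequence to the spreading-model limit. You choose the $2$-co-lacunary subsequence inside the subsequence that realizes the spreading model, which spells out a point the paper leaves implicit.
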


\begin{proof}
  Let \(J:X\to\M_*\) be an isomorphic embedding.

  \begin{enumerate}
    \item Let \((x_n)\) be a seminormalized weakly null sequence. Since \(J\) is an isomorphic embedding, \((J(x_n))\) is also a seminormalized weakly null sequence.
    By \thmref{thm:haagerup-weak-null}, \((J(x_n))\) has a \(2\)-co-lacunary subsequence \((Jx_{n_k})\). Hence, there exists lower constant \(\gamma>0\), such that
    \[
    \left\|\sum_k a_kx_{n_k}\right\|_X
    \geq\frac{1}{\|J\|}\left\|\sum_k a_kJx_{n_k}\right\|
    \geq\frac{\gamma}{\|J\|}
    \left(\sum_k|a_k|^2\right)^{1/2},
    \]
    which implies that \((x_{n_k})\) is \(2\)-co-lacunary in \(X\).
    \item By \cite[Chapter~V, Corollary~3]{Diestel1984}, every infinite-dimensional Banach space contains a normalized basic sequence \((x_n)\). We observe that \((x_n)\) has no norm-convergent subsequence and \((J(x_n))\) has no norm-convergent subsequence, also. By \thmref{thm:main}, \((J(x_n))\) has a \(2\)-co-lacunary subsequence \((Jx_{n_k})\). Applying the same argument of (1), \((x_{n_k})\) is \(2\)-co-lacunary in \(X\).
    \item Select a \(2\)-co-lacunary subsequence of the weakly null sequence generating the model. Since a subsequence generates the same model,  passing the lower \(\ell_2\)-estimate of this subsequence to the spreading-model limit proves the claim.
\end{enumerate}
\end{proof}

We next obtain a corresponding subsequence result for
Haagerup \(L_p\)-spaces associated with arbitrary
von Neumann algebras, where \(1\le p\le 2\). 

\begin{corollary}\label{cor:Haagerup-Lp-dichotomy}
Let \(\M\) be an arbitrary von Neumann algebra
and let \(1\leq p\leq2\).
Every bounded sequence in the Haagerup space
\(L_p(\M)\) has either a norm-convergent
subsequence or a \(2\)-co-lacunary subsequence.
\end{corollary}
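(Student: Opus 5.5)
The plan is to transfer \thmref{thm:main} from \(L_1(\M)\) to \(L_p(\M)\) through a map that preserves the two alternatives. The case \(p=1\) is \thmref{thm:main} itself. For \(p=2\), \(L_2(\M)\) is a Hilbert space. In a Hilbert space, a bounded sequence with no norm-convergent subsequence has a weakly convergent subsequence \(y_n\weakto y\) with \(\inf_n\|y_n-y\|_2>0\). Passing to an almost orthogonal subsequence, as in the proof of \propref{prop:dp-l2ss-arbitrary}, makes \((y_n-y)\) equivalent to the \(\ell_2\) basis, so it is \(2\)-co-lacunary. \lemref{lem:translation} then gives a \(2\)-co-lacunary subsequence of \((y_n)\).

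For general \(1<p<2\) I would argue in two stages. First, by reflexivity of \(L_p(\M)\) and the Eberlein--\v Smulian theorem, pass to a subsequence with \(y_n\weakto y\). If \(\liminf\|y_n-y\|_p=0\) we get norm convergence. Otherwise \(f_n=y_n-y\) is seminormalized and weakly null, and by \lemref{lem:translation} it suffices to find a \(2\)-co-lacunary subsequence of \((f_n)\).

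For the weakly null case, use \lemref{lem:sigma-finite-corner} to reduce to \(\sigma\)-finite \(\M\) with a faithful normal state of density \(D\). The cleanest route is the known fact that \(L_p(\M)\) embeds isomorphically into \(L_1(\mathcal R)\) for some von Neumann algebra \(\mathcal R\) when \(1\le p\le 2\). This follows from Junge's embedding of noncommutative \(L_p\) into \(L_1\), combined with Haagerup's reduction to the finite (semifinite) case. Given an isomorphic embedding \(J:L_p(\M)\to\mathcal R_*\), \corref{cor:subspaces-spreading}(1) applies directly, and a \(2\)-co-lacunary subsequence of \((Jf_n)\) pulls back with constant \(\gamma/\|J\|\).

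If one wants to avoid citing the embedding theorem, an alternative is an elementary argument via type/cotype of \(L_p\). The inequality \(\|\sum a_nf_n\|_p\ge c(\sum|a_n|^2)^{1/2}\) for a suitable subsequence should follow from cotype \(2\) of \(L_p(\M)\), \(1\le p\le2\) (Pisier--Xu, cf.\ \lemref{lem:cotype}). One would first choose a subsequence that is a small perturbation of an unconditional block-like sequence. Securing that unconditionality in general is the hard part, so I would prefer the embedding.

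The main obstacle is the justification of the embedding \(L_p(\M)\hookrightarrow L_1(\mathcal R)\) for arbitrary, nonhyperfinite, type III \(\M\). I would cite Junge's theorem (noncommutative \(L_p\) embeds into \(L_1\) of an ultraproduct/ free product algebra). Together with Haagerup's reduction, this shows \(L_p(\M)\) is complemented in an ultraproduct of \(L_p\) of finite algebras, giving the embedding. The remaining steps are routine applications of \lemref{lem:translation} and \corref{cor:subspaces-spreading}.
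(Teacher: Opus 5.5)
Your proposal is correct and takes essentially the paper's route: reduce to the \(\sigma\)-finite case via \lemref{lem:sigma-finite-corner}, embed \(L_p(\M)\) isomorphically into a von Neumann algebra predual, and pull the \(L_1\) dichotomy back through the embedding constants. The differences are minor. The paper cites Randrianantoanina's embedding into the predual of a \emph{finite} algebra and applies \thmref{thm:main} directly. Your Junge-type embedding into an arbitrary predual also suffices, because the \(L_1\) results hold for arbitrary von Neumann algebras. Your preliminary reduction to the weakly null case via reflexivity and \lemref{lem:translation} is harmless but not needed.
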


\begin{proof}
The case \(p=1\) follows from
\thmref{thm:main}, while the case \(p=2\)
follows from the standard subsequence
structure of Hilbert spaces. Thus it
remains to consider \(1<p<2\).

Suppose that \(1<p<2\), and let \((x_n)\) be a
bounded sequence in \(L_p(\M)\).
By \lemref{lem:sigma-finite-corner}, there exists
a \(\sigma\)-finite projection \(e\in\M\) such that
\[
  x_n=ex_ne\qquad(n\geq1).
\]
Using the isometric identification
\[
  L_p(e\M e)=eL_p(\M)e,
\]
we may assume that \(\M\) is \(\sigma\)-finite.

By the embedding theorem of Randrianantoanina
\cite{Randrianantoanina2008}, there exist a
finite von Neumann algebra \(\N\) and an
isomorphic embedding
\[
  J:L_p(\M)\longrightarrow L_1(\N)=\N_*.
\]
Thus, for some constants \(a,b>0\),
\begin{align}
  a\|x\|_p\leq\|Jx\|_1\leq b\|x\|_p
  \qquad(x\in L_p(\M)).
  \label{eq:Haagerup-Lp-embedding}
\end{align}
Applying \thmref{thm:main} to the bounded
sequence \((Jx_n)\) in \(\N_*\), we obtain either
a norm-convergent subsequence or a
\(2\)-co-lacunary subsequence.

In the first case, the lower bound in
\eqref{eq:Haagerup-Lp-embedding} shows that
the corresponding subsequence of \((x_n)\)
is Cauchy, and hence convergent, in \(L_p(\M)\).

In the second case, there exist a subsequence
\((x_{n_k})\) and \(\gamma>0\) such that
\[
  \left\|\sum_k c_kJx_{n_k}\right\|_1
  \geq\gamma
  \left(\sum_k|c_k|^2\right)^{1/2}
\]
for every finitely supported scalar sequence
\((c_k)\). By \eqref{eq:Haagerup-Lp-embedding},
\[
  \left\|\sum_k c_kx_{n_k}\right\|_p
  \geq\frac{\gamma}{b}
  \left(\sum_k|c_k|^2\right)^{1/2}.
\]
Thus \((x_{n_k})\) is \(2\)-co-lacunary in
\(L_p(\M)\). This completes the proof.
\end{proof}

\begin{remark}
The range \(1\leq p\leq2\) in
\corref{cor:Haagerup-Lp-dichotomy} is optimal.
For \(p>2\), the unit vector basis \((e_n)\)
of \(\ell_p\) has no norm-convergent subsequence.
Moreover, for every subsequence \((e_{n_k})\),
\[
  \left\|\sum_{k=1}^m e_{n_k}\right\|_p
  =m^{1/p}=o(m^{1/2}),
\]
so no subsequence is \(2\)-co-lacunary.

For \(0<p<1\), Henson et al.\
\cite[Theorem~4.2]{HensonEtAl1997}
constructed a bounded, uniformly separated
sequence in \(L_p(0,1)\) whose every subsequence
has dense linear span.
Since \(L_p(0,1)^*=\{0\}\), no such subsequence
can be \(2\)-co-lacunary; otherwise, the
coefficient map would extend to a nonzero
bounded linear operator from \(L_p(0,1)\)
into \(\ell_2\).
Thus the conclusion fails outside
\(1\leq p\leq2\), even in the commutative
setting.
\end{remark}

Sukochev and Zhou
\cite[Corollary~3.4]{SukochevZhou2020}
proved that every bounded sequence in
\((L_1+L_2)(\M,\tau)\) admits either a
norm-convergent subsequence or a
\(2\)-co-lacunary subsequence when \(\M\)
is hyperfinite and semifinite and acts on
a separable Hilbert space.
We extend this result to arbitrary
semifinite von Neumann algebras.

\begin{corollary}\label{cor:L1-L2-dichotomy}
Let \(\M\) be a semifinite von Neumann algebra
equipped with a normal faithful semifinite trace
\(\tau\). Every bounded sequence in
\[
  (L_1+L_2)(\M,\tau)
  =L_1(\M,\tau)+L_2(\M,\tau)
\]
has either a norm-convergent subsequence or a
\(2\)-co-lacunary subsequence.
\end{corollary}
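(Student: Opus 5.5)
The plan is to embed \((L_1+L_2)(\M,\tau)\) isomorphically into the predual of a suitable von Neumann algebra and then transfer \thmref{thm:main}, exactly as in the proof of \corref{cor:Haagerup-Lp-dichotomy}. The natural candidate is the map
\[
  J:(L_1+L_2)(\M,\tau)\longrightarrow L_1(\M\,\overline\otimes\,L_\infty(\Omega)),
\]
obtained by viewing \(L_1+L_2\) as a noncommutative symmetric space and realizing it inside a semifinite \(L_1\)-space. Since \(L_1+L_2\) is \(2\)-concave with \(\|x\|_{L_1+L_2}\approx\int_0^1\mu_t(x)\,dt+\bigl(\int_1^\infty\mu_t(x)^2\,dt\bigr)^{1/2}\), I would try to construct \(J\) directly. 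For \(x=x_1+x_2\), I would use a decomposition of \(x\) through the spectral projection \(e=\one_{(\mu_1(x),\infty)}(|x|)\) (with \(\tau(e)\le1\)), together with a Gaussian or Rademacher embedding of \(L_2(\M,\tau)\) into \(L_1(\M\,\overline\otimes\,L_\infty(\Omega))\). By the noncommutative Khintchine inequality in \(L_1\), this embedding is isomorphic onto its range, and its norm is controlled by the row/column \(L_1\)-norms.

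However, this decomposition is nonlinear, so a more robust route is to avoid an explicit linear embedding. I would instead prove the dichotomy directly by mimicking the proof of \thmref{thm:main}. I would use Rosenthal's \(\ell_1\)-theorem together with weak sequential completeness of \(L_1+L_2\): as the sum of two weakly sequentially complete spaces, \(L_1+L_2\) is a quotient of \(L_1\oplus L_2\), and this property needs an argument. If it proves awkward, I would instead lift the sequence to \(L_1\oplus_1 L_2\) in a bounded way, choosing \(x_n=u_n+v_n\) with \(\|u_n\|_1+\|v_n\|_2\le 2\|x_n\|\). This lifting is the key reduction. I would then pass to a subsequence so that \(v_n\) converges weakly in \(L_2\), and apply \thmref{thm:main} (with \(\M\) semifinite, so \(L_1(\M,\tau)\cong\M_*\)) to \((u_n)\). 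In \(L_2\), a weakly convergent sequence has, after subtracting its limit, either a norm-null subsequence or a subsequence that is almost orthogonal, hence \(2\)-co-lacunary and upper \(\ell_2\).

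The case analysis then runs as follows. If both \((u_n)\) and \((v_n)\) have norm-convergent subsequences, so does \((x_n)\). Otherwise, I need a lower \(\ell_2\)-estimate in the quotient norm, and this is the main obstacle. The difficulty is that a \(2\)-co-lacunary sequence in \(L_1\oplus L_2\) need not remain so after applying the quotient map \(q(u,v)=u+v\), whose kernel is \(\{(z,-z):z\in L_1\cap L_2\}\).

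To handle this, I would instead use the duality \((L_1+L_2)^*=L_\infty\cap L_2\) and mimic \lemref{lem:one-sided-witness}. The aim is to find norming functionals \(w_n\in L_\infty\cap L_2\) such that \(\|\sum\beta_jw_j\|_{L_\infty\cap L_2}\lesssim\|\beta\|_2\) and the off-diagonal pairings are small. Concretely, I would first reduce to the seminormalized weakly null case via Rosenthal's theorem and \lemref{lem:translation}. I would then apply the four-corner decomposition, \lemref{lem:four-corners}, to the \(L_1\)-parts. For the biequiintegrable part, I would truncate to a finite-trace corner, as in \corref{cor:biequi-weak-null}, where \(L_1+L_2=L_1\) up to equivalent norms, and conclude by \corref{cor:finite-weak-null}. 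For the disjoint parts, the witnesses from \lemref{lem:one-sided-witness} lie in \(L_\infty\); I expect controlling their \(L_2\)-norms to be the delicate step. I would try cutting them by spectral projections of finite trace and using that the \(\tau\)-mass escaping to infinity is carried by the \(L_2\)-component, which is itself \(2\)-co-lacunary.
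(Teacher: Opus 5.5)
There is a genuine gap: you never obtain a lower \(\ell_2\)-estimate in the \(L_1+L_2\)-norm, and the step you yourself call the main obstacle stays open. After lifting \(x_n=u_n+v_n\), co-lacunarity of \((u_n,v_n)\) in \(L_1\oplus_1L_2\) does not descend to the quotient, as you note, and the proposed repairs do not work as stated.

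First, the \(L_1\)-parts of a weakly null sequence need be neither weakly null nor relatively weakly compact. For \(\M=L_\infty(0,\infty)\), the sequence \(x_n=\one_{[n,n+1)}\) is normalized and weakly null in \(L_1+L_2\) (the dual is \(L_\infty\cap L_2\)). Yet the admissible lifting \(u_n=x_n\), \(v_n=0\) is the \(\ell_1\)-basis of \(L_1\). So \lemref{lem:weak-compact-compression} cannot remove the corners \(d_k\), and the reductions behind \thmref{thm:haagerup-weak-null} do not apply to \((u_n)\).

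Second, for the disjoint and one-sided parts, the witnesses of \lemref{lem:one-sided-witness} satisfy \(w_i^*w_j=0\). Hence \(\|\sum_j\beta_jw_j\|_{L_\infty\cap L_2}\leq\max\{1,\sup_j\|w_j\|_2\}\|\beta\|_2\), and everything hinges on the uncontrolled quantity \(\|w_j\|_2^2=\tau(v_j^*v_j)\). Cutting \(w_j\) to finite trace does not help in general. Take pairwise disjoint \(A_j\subset(0,\infty)\) with \(|A_j|=j^2\) and \(x_j=j^{-1}\one_{A_j}\). Then \(\|x_j\|_{L_1+L_2}=1\), attained by the \(L_2\)-type functional \(j^{-1}\one_{A_j}\), whereas any cut of \(\one_{A_j}\) to measure at most \(T\) pairs with \(x_j\) to at most \(T/j\).

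Third, weak sequential completeness of \(L_1+L_2\) does not follow from its being a quotient of \(L_1\oplus L_2\), since \(c_0\) is a quotient of \(\ell_1\).

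The idea that closes the argument is the one you set aside in your first paragraph: a \emph{linear} isomorphic embedding of \((L_1+L_2)(\M,\tau)\) into the predual of a finite von Neumann algebra. Once you have it, \thmref{thm:main} transfers verbatim as in \corref{cor:Haagerup-Lp-dichotomy}, and weak sequential completeness, cancellation in the quotient, witnesses and corners all become irrelevant. You correctly saw that your \(x\)-dependent spectral cut at \(\one_{(\mu_1(x),\infty)}(|x|)\) makes the map nonlinear. Producing a genuine linear embedding is a substantial theorem rather than a consequence of Khintchine alone, and the paper imports it. It first reduces to \(\sigma\)-finite \(\M\) by applying \lemref{lem:sigma-finite-corner} to \(\{a_n,|b_n|^2,|b_n^*|^2\}\) for decompositions \(x_n=a_n+b_n\), so that \(x_n=ex_ne\). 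It then applies \cite[Corollary~3]{HuangJungeSukochevZanin2025} with \(p=1\) and \(G=L_2(0,\infty)\), and checks that \((L_1+G)(\M)\) agrees with \((L_1+L_2)(\M,\tau)\) up to equivalent norms. This yields \(J:(L_1+L_2)(\M,\tau)\to L_1(\N)\) with \(\N\) finite and \(a\|x\|\leq\|Jx\|_1\leq b\|x\|\), after which both alternatives pull back directly.
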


\begin{proof}
Write \(X=(L_1+L_2)(\M,\tau)\), equipped with
the norm
\[
  \|x\|_X
  =\inf_{x=a+b}
  \bigl(\|a\|_{L_1(\M,\tau)}
       +\|b\|_{L_2(\M,\tau)}\bigr).
\]

Let \((x_n)\) be a bounded sequence in \(X\).
For every \(n\), choose
\[
  x_n=a_n+b_n,\qquad
  a_n\in L_1(\M,\tau),\quad
  b_n\in L_2(\M,\tau).
\]
Since
\[
  |b_n|^2,\ |b_n^*|^2\in L_1(\M,\tau),
\]
applying \lemref{lem:sigma-finite-corner}
to the countable set
\[
  \{a_n,|b_n|^2,|b_n^*|^2:n\geq1\}
\]
gives a \(\sigma\)-finite projection
\(e\in\M\) such that
\[
  a_n=ea_ne,\qquad b_n=eb_ne
  \qquad(n\geq1).
\]
Consequently, \(x_n=ex_ne\) for every \(n\).
Then, we may replace \(\M\) by \(e\M e\)
and assume that \(\M\) is \(\sigma\)-finite.

We apply
\cite[Corollary~3]{HuangJungeSukochevZanin2025}
with \(p=1\) and \(G=L_2(0,\infty)\).
Indeed, \(G\) is isomorphic to \(\ell_2\),
which embeds isomorphically into \(L_1(0,1)\)
by the classical Khintchine inequality
\cite{Kahane1985}.
Moreover, the symmetric sequence space
generated by \(G\) is \(\ell_2\ne\ell_1\). Thus the assumptions of the \cite[Corollary~3]{HuangJungeSukochevZanin2025}
are satisfied.

We observe that the sum space appearing
in the cited corollary coincides with \(X\),
up to equivalent norms. Indeed, with
\(G=L_2(0,\infty)\), we observe that
\[
  \begin{aligned}
  \|x\|_{(L_1+G)(\M)}
  &=
  \int_0^1\mu_t(x)\,dt
  +\left(
    \sum_{k=0}^{\infty}
    \left(\int_k^{k+1}\mu_t(x)\,dt\right)^2
   \right)^{1/2}\\
  &\asymp
  \int_0^1\mu_t(x)\,dt
  +\left(\int_1^\infty\mu_t(x)^2\,dt\right)^{1/2}\\
  &\asymp\|x\|_X.
  \end{aligned}
\]
Here \(\asymp\) denotes equivalence of norms.
Consequently,
\cite[Corollary~3]{HuangJungeSukochevZanin2025}
yields a finite von Neumann algebra \(\N\)
and an isomorphic embedding
\[
  J:X\longrightarrow L_1(\N)=\N_*.
\]
Thus there exist constants \(a,b>0\)
such that
\begin{align}
  a\|x\|_X
  \leq\|Jx\|_1
  \leq b\|x\|_X
  \qquad(x\in X).
  \label{eq:L1L2-embedding}
\end{align}

Applying \thmref{thm:main} to the bounded
sequence \((Jx_n)\) in \(\N_*\), we obtain
either a norm-convergent subsequence or
a \(2\)-co-lacunary subsequence.

In the first case, the lower bound in
\eqref{eq:L1L2-embedding} shows that
the corresponding subsequence of \((x_n)\)
is Cauchy, and hence convergent, in \(X\).

In the second case, there exist a subsequence
\((x_{n_k})\) and \(\gamma>0\) such that
\[
  \left\|\sum_k c_kJx_{n_k}\right\|_1
  \geq\gamma
  \left(\sum_k|c_k|^2\right)^{1/2}
\]
for every finitely supported scalar
sequence \((c_k)\).
By \eqref{eq:L1L2-embedding},
\[
  \left\|\sum_k c_kx_{n_k}\right\|_X
  \geq\frac{\gamma}{b}
  \left(\sum_k|c_k|^2\right)^{1/2}.
\]
Hence \((x_{n_k})\) is \(2\)-co-lacunary
in \(X\). This completes the proof.
\end{proof}

Recall that a Banach function space
\(E=E(0,\infty)\) is called symmetric if,
whenever \(f\in E\) and \(g\in L_0(0,\infty)\)
satisfy
\[
  \mu_t(g)\leq \mu_t(f)
  \qquad(t>0),
\]
we have \(g\in E\) and
\[
  \|g\|_E\leq\|f\|_E.
\]
Here \(\mu(f)\) denotes the decreasing
rearrangement of \(|f|\); see
\cite{BennettSharpley1988}.

For measurable functions \(f\) and \(g\), we
write \(f\prec\!\prec g\) if
\[
  \int_0^t\mu_s(f)\,ds
  \leq
  \int_0^t\mu_s(g)\,ds
  \qquad(t>0).
\]
A symmetric Banach function space \(E\) is
called fully symmetric if
\[
  g\in E,\quad f\prec\!\prec g
  \quad\Longrightarrow\quad
  f\in E,\qquad
  \|f\|_E\leq\|g\|_E.
\]
It is called strongly symmetric if
\[
  f,g\in E,\quad f\prec\!\prec g
  \quad\Longrightarrow\quad
  \|f\|_E\leq\|g\|_E.
\]
Thus, every fully symmetric space is strongly
symmetric; see \cite{DoddsDePagter2014}.

Moreover, \(E\) is said to have an
order-continuous norm if, for every sequence
\((f_n)\subset E_+\) satisfying
\[
  f_n\downarrow0\quad\text{a.e.},
\]
one has
\[
  \|f_n\|_E\longrightarrow0;
\]
see \cite{BennettSharpley1988}.

We also recall that
\[
c_0=\{(a_n)_{n\geq1}:a_n\to0\}
\]
is the Banach space of scalar sequences
converging to zero, equipped with the norm
\[
  \|(a_n)\|_{c_0}
  =\sup_n|a_n|.
\]

Let \(\M\) be a semifinite von Neumann algebra
equipped with a normal faithful semifinite
trace \(\tau\), and let \(S(\M,\tau)\) denote
the algebra of all \(\tau\)-measurable
operators affiliated with \(\M\).
For \(x\in S(\M,\tau)\), let
\(\mu(x)\) denote its generalized singular
value function; see \cite{FackKosaki1986}.
The associated noncommutative symmetric space
is defined by
\[
  E(\M,\tau)
  =
  \{x\in S(\M,\tau):\mu(x)\in E\},
\]
with norm
\[
  \|x\|_{E(\M,\tau)}
  =
  \|\mu(x)\|_E;
\]
see \cite{DoddsDoddsDePagter1989}.

Using the preceding corollary, we further
extend the characterization established
in \cite[Theorem~1.1]{SukochevZhou2020}
to arbitrary semifinite von Neumann algebras,
without assuming hyperfiniteness or
separability.

\begin{theorem}\label{thm:semifinite-symmetric-dichotomy}
Let \(E=E(0,\infty)\) be an order-continuous
symmetric Banach function space such that
\[
  E\subset(L_1+L_2)(0,\infty).
\]
Let \(\M\) be an arbitrary semifinite
von Neumann algebra equipped with a normal
faithful semifinite trace \(\tau\).
Then the following assertions are equivalent:
\begin{enumerate}
\item Every bounded sequence in \(E(\M,\tau)\)
has either a norm-convergent subsequence or
a \(2\)-co-lacunary subsequence.
\item Every sequence of pairwise orthogonal
elements in \(E(\M,\tau)\) either converges
to zero in norm or has a
\(2\)-co-lacunary subsequence.
\end{enumerate}
\end{theorem}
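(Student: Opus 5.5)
The implication (1)$\Rightarrow$(2) is straightforward; the work is in (2)$\Rightarrow$(1), which I plan to reduce to \corref{cor:L1-L2-dichotomy} via a subsequence-splitting argument.

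\emph{(1)$\Rightarrow$(2).} Let $(x_n)$ be pairwise orthogonal (left and right supports mutually orthogonal) and not norm null. Pass to a subsequence with $\|x_n\|_E\ge\delta>0$. For orthogonal $a,b$ we have $|a-b|=|a|+|b|$ with orthogonal summands, so $\mu(a-b)\ge\mu(a)$ and hence $\|a-b\|_E\ge\|a\|_E$. Therefore no subsequence is Cauchy. If $(x_n)$ is bounded, (1) gives a $2$-co-lacunary subsequence directly. If not, pass to $\|x_n\|_E\ge1$ and apply (1) to the bounded orthogonal sequence $x_n/\|x_n\|_E$. This yields
\[
\Bigl\|\sum a_nx_n\Bigr\|_E\ge\gamma\Bigl(\sum|a_n|^2\|x_n\|_E^2\Bigr)^{1/2}\ge\gamma\Bigl(\sum|a_n|^2\Bigr)^{1/2},
\]
so the original subsequence is $2$-co-lacunary as well.

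\emph{(2)$\Rightarrow$(1).} Let $(x_n)$ be bounded in $E(\M,\tau)$. First reduce to a $\sigma$-finite corner via \lemref{lem:sigma-finite-corner}, applied to suitable $L_1$-representatives of the $x_n$ as in the proof of \corref{cor:L1-L2-dichotomy}. Since $E$ is order continuous, I would invoke the noncommutative Kadec--Pe\l czy\'nski subsequence splitting for order-continuous symmetric spaces (in the form available for semifinite algebras, e.g.\ Sukochev's / Randrianantoanina's). After passing to a subsequence, this gives
\[
x_n=y_n+z_n+w_n,\qquad \sum_n\|w_n\|_E<\infty,
\]
where $\{y_n\}$ is $E$-equiintegrable and the $z_n=p_nx_np_n$ are supported by pairwise orthogonal projections $p_n$ with $\sum_n\tau(p_n)<\infty$.

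Next apply \corref{cor:L1-L2-dichotomy} to $(y_n)$, which is bounded in $L_1+L_2$ because $E\subset L_1+L_2$ continuously. There are two outcomes.
\begin{enumerate}
\item[(a)] Some subsequence $(y_n)$ converges in $L_1+L_2$, hence in measure. Together with $E$-equiintegrability and order continuity (a Vitali-type argument), it then converges in $E$ to some $y$.
\item[(b)] Some subsequence $(y_n)$ is $2$-co-lacunary in $L_1+L_2$, hence in $E$, since $\|\cdot\|_{L_1+L_2}\le c\|\cdot\|_E$.
\end{enumerate}
In case (a), if $\|z_n\|_E\to0$, then $(x_n)$ has a norm-convergent subsequence. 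Otherwise (2) gives a $2$-co-lacunary subsequence of $(z_n)$, and $x_n=y+z_n+\bigl((y_n-y)+w_n\bigr)$. After thinning so that the bracketed errors are summable, \lemref{lem:perturb} and \lemref{lem:translation} give a $2$-co-lacunary subsequence of $(x_n)$. In case (b), if $\|z_n\|_E\to0$, then $x_n$ is a summable perturbation of $y_n$ after thinning, and \lemref{lem:perturb} finishes.

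The remaining case is (b) combined with $\|z_n\|_E\not\to0$ (so $(z_n)$ is $2$-co-lacunary by (2)), and more generally any situation where I want the lower estimate for $(x_n)$ to come from the orthogonal part. This is the step I expect to be the main obstacle. My first attempt is a pinching argument. Define
\[
\mathcal P(x)=\sum_n p_nxp_n,
\]
which is a contraction on fully symmetric spaces; for a merely order-continuous symmetric $E$ I would pass to an equivalent fully symmetric renorming, or use that $E$ is an interpolation space for $(L_1,L_\infty)$ up to constants. Then
\[
\Bigl\|\sum_ka_kx_k\Bigr\|_E\gtrsim\Bigl\|\sum_ka_kz_k\Bigr\|_E-\Bigl\|\sum_np_n\Bigl(\sum_ka_ky_k\Bigr)p_n\Bigr\|_E-\Bigl\|\sum_ka_kw_k\Bigr\|_E .
\]
The plan for the middle term is a gliding-hump selection. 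For $k\ne n$, choose later $p_n$ with $\tau(p_n)$ so small that $\|p_ny_kp_n\|_E\le\varepsilon_{nk}$ for all earlier $k$; for earlier $p_n$ and later $k$, use that $y_k$ is eventually small on the fixed finite-trace projection $\bigvee_{n<k}p_n$, via equiintegrability and passing to a subsequence. For $k=n$, use that $\|p_ny_np_n\|_E\to0$ by equiintegrability. Choosing $\sum_{n,k}\varepsilon_{nk}^2$ small makes the middle term at most $\eta\,(\sum|a_k|^2)^{1/2}$, mirroring the off-diagonal Hilbert--Schmidt estimate in \lemref{lem:one-sided-witness}. This transfers the lower $\ell_2$-estimate of $(z_n)$ to $(x_n)$. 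The point to check carefully is that the splitting can indeed be arranged with $\tau(p_n)\to0$ fast enough, and that the pinching bound holds with a constant depending only on $E$.
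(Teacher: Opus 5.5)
Your proof of (1)$\Rightarrow$(2) is fine. For (2)$\Rightarrow$(1), however, there are two genuine gaps. Both come from applying \corref{cor:L1-L2-dichotomy} to the equiintegrable part $(y_n)$ rather than to $(x_n)$ itself.

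\emph{First gap: the mixed case (b) with $\|z_n\|_E\not\to0$ does not close.} Equiintegrability makes $\|ey_ke\|_E$ small uniformly in $k$ only when $\tau(e)$ is small. It gives no decay in $k$ on a \emph{fixed} projection. So your claim that $y_k$ is eventually small on $\bigvee_{n<k}p_n$ is false: an i.i.d.\ sequence is never eventually small on a fixed set of positive measure. Consequently the bound $\|\mathcal P(\sum_ka_ky_k)\|_E\le\eta(\sum_k|a_k|^2)^{1/2}$ fails.

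Here is a concrete example. Let $E=L_1(0,\infty)$ and $\M=L_\infty([0,1]^2)$. Let $(\xi_k)$ be i.i.d.\ symmetric $q$-stable functions of $s$ with $1<q<2$. Let $B_n\subset[0,1]$ be pairwise disjoint with $\sum_n|B_n|<1$, where $|\cdot|$ is Lebesgue measure. Put $p_n=\one_{[0,1]\times B_n}$, $y_k=\xi_k(s)\one_{B_k^c}(t)$, $z_k=|B_k|^{-1}p_k$ and $x_k=y_k+z_k$. Then:
\begin{itemize}
\item $(y_k)$ is equiintegrable and $2$-co-lacunary, since $\|\sum_ka_ky_k\|_1\ge(1-\sum_n|B_n|)\|\xi_1\|_1\|a\|_{\ell_q}$;
\item $\|z_k\|_1=1$;
\item $\|p_ny_kp_n\|_1=|B_n|\,\|\xi_1\|_1$ for all $k\neq n$, so no thinning helps;
\end{itemize}
and
\[
  \Bigl\|\mathcal P\Bigl(\sum_k a_ky_k\Bigr)\Bigr\|_1
  =\|\xi_1\|_1\sum_n|B_n|\,\bigl\|(a_k)_{k\neq n}\bigr\|_{\ell_q}.
\]
For $a_1=\dots=a_m=1$ this is at least $\|\xi_1\|_1(\sum_n|B_n|)(m-1)^{1/q}$, which is not $O(m^{1/2})$. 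The sequence $(x_k)$ survives here only because $\|\sum_ka_kz_k\|_1=\sum_k|a_k|$. For a general $E$, hypothesis (2) gives only a lower $\ell_2$-estimate for $(z_k)$, and you have no upper estimate for $(y_k)$ that this lower estimate dominates.

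\emph{Second gap: the Vitali step in case (a) fails when $\tau(\one)=\infty$.} With $\sum_n\tau(p_n)<\infty$, any mass escaping to infinity must stay in $y_n$, and convergence in $L_1+L_2$ does not control it in $E$. Take $\M=L_\infty(0,\infty)$, $E=L_{3/2}(0,\infty)$ and $x_n=n^{-2/3}\one_{[0,n]}$. Then $\|x_n\|_E=1$, $\|x_n\|_{L_1+L_2}\le\|x_n\|_2=n^{-1/6}$, and $\|x_n\one_A\|_E\le(|A|/n)^{2/3}$. Hence every splitting of your form has $\|z_n\|_E\to0$ and $\|x_n-y_n\|_E\to0$. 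Case (a) would then give $x_n\to0$ in $E$, which is false.

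The missing ingredients are the ones the paper uses.
\begin{itemize}
\item Apply \corref{cor:L1-L2-dichotomy} to $(x_n)$ directly. The co-lacunary alternative then passes to $E(\M,\tau)$ via $\|\cdot\|_{L_1+L_2}\le C_E\|\cdot\|_{E(\M,\tau)}$, and no mixed case arises.
\item If instead $x_n\to x$ in $L_1+L_2$, use (2) to exclude disjoint $c_0$-sequences. This gives the Fatou property of $E(\M,\tau)$ via \cite[Theorem~6.5]{Dodds2016}, so $x\in E(\M,\tau)$.
\item Apply the disjointification theorem \cite[Theorem~2.5]{ChilinDoddsSukochev1997}, as in \cite[Lemma~3.6]{SukochevZhou2020}, to $x_n-x\to0$ in measure. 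Invoke (2) again on the disjoint part, and finish with \lemref{lem:translation}.
\end{itemize}
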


\begin{proof}
The implication \(\mathrm{(i)}\Rightarrow\mathrm{(ii)}\)
is straightforward. Indeed, if a pairwise
orthogonal sequence does not converge to zero
in norm, we may select a subsequence whose
norms are bounded below. Normalizing this
subsequence gives a bounded sequence with
no norm-convergent subsequence. By (i), it has
a \(2\)-co-lacunary subsequence, and the same
holds for the original sequence.

We now prove \(\mathrm{(ii)}\Rightarrow\mathrm{(i)}\).
By \cite[Remark~2.1]{SukochevZhou2020},
\(E(\M,\tau)\) is fully symmetric and hence
strongly symmetric. Moreover, assumption (ii)
excludes pairwise orthogonal sequences
equivalent to the unit vector basis of \(c_0\).
Thus, by \cite[Theorem~6.5,
\(\mathrm{(v)}\Rightarrow\mathrm{(i)}\)]{Dodds2016},
\(E(\M,\tau)\) has the Fatou property.

Put \(X=(L_1+L_2)(\M,\tau)\).
Since \(E\subset (L_1+L_2)(0,\infty)\),
the inclusion is continuous by the closed
graph theorem. Hence, there exists \(C_E>0\)
such that
\begin{align}\label{X E}
  \|z\|_X\leq C_E\|z\|_{E(\M,\tau)}
  \qquad(z\in E(\M,\tau)).
\end{align}
Let \((x_n)\) be a bounded sequence in
\(E(\M,\tau)\). By
\corref{cor:L1-L2-dichotomy}, it admits
either a norm-convergent subsequence or
a \(2\)-co-lacunary subsequence in \(X\).

In the latter case, \eqref{X E} shows that
the subsequence is also \(2\)-co-lacunary
in \(E(\M,\tau)\).

It remains to consider the case where,
after passing to a subsequence,
\[
  x_n\longrightarrow x
  \qquad\text{in }X.
\]
Since convergence in \(X\) implies
convergence in measure, the Fatou property
of \(E(\M,\tau)\), together with the
boundedness of \((x_n)\) in \(E(\M,\tau)\),
yields
\[
  x\in E(\M,\tau),\qquad
  \|x\|_{E(\M,\tau)}
  \leq\liminf_n\|x_n\|_{E(\M,\tau)}.
\]

Put \(y_n=x_n-x\). Then \((y_n)\) is bounded
in \(E(\M,\tau)\) and converges to zero
in measure. By the argument of
\cite[Lemma~3.6]{SukochevZhou2020},
which relies on the disjointification theorem
\cite[Theorem~2.5]{ChilinDoddsSukochev1997}
and does not require hyperfiniteness,
\((y_n)\) has either a norm-convergent
subsequence or a \(2\)-co-lacunary subsequence.

In the first case, the corresponding
subsequence of \((x_n)\) converges in norm.
In the second case, \lemref{lem:translation}
gives a \(2\)-co-lacunary subsequence
of \((x_n)\). This completes the proof.
\end{proof}


\begin{thebibliography}{10}



\bibitem{AldousFremlin1982}
D.~J. Aldous and D.~H. Fremlin, \emph{Colacunary sequences in
  \(L\)-spaces}, Studia Math. \textbf{71} (1982), no.~3, 297--304.

\bibitem{AlspachJuddOdell2005}
D. Alspach, R. Judd, and E. Odell, \emph{The Szlenk index and local
  \(\ell_1\)-indices}, Positivity \textbf{9} (2005), no.~1, 1--44.

\bibitem{AndroulakisEtAl2005}
G. Androulakis, E. Odell, Th. Schlumprecht, and
  N. Tomczak-Jaegermann, \emph{On the structure of the spreading models of a
  Banach space}, Canad. J. Math. \textbf{57} (2005), no.~4, 673--707.

\bibitem{Astashkin2024}
S. V. Astashkin,
\emph{Sequences of independent functions and
structure of rearrangement invariant spaces},
Russian Math. Surveys \textbf{79} (2024),
no.~3, 375--457.


\bibitem{BennettSharpley1988}
C. Bennett and R. Sharpley,
\emph{Interpolation of operators},
Pure and Applied Mathematics, vol.~129,
Academic Press, Boston, MA, 1988.

\bibitem{Brezis2011}
H. Brezis, \emph{Functional analysis, Sobolev spaces and partial
  differential equations}, Universitext, Springer, New York, 2011.

\bibitem{BrunelSucheston1974}
A. Brunel and L. Sucheston, \emph{On \(B\)-convex Banach spaces}, Math.
  Systems Theory \textbf{7} (1974), no.~4, 294--299.


\bibitem{CauseyDoebele2020}
R.~M. Causey and C. Doebele, \emph{A Ramsey theorem for pairs in
  trees}, Fund. Math. \textbf{248} (2020), no.~2, 147--193.

\bibitem{ChilinDoddsSukochev1997}
V. I. Chilin, P. G. Dodds, and F. A. Sukochev,
\emph{The Kadec--Klee property in symmetric spaces
of measurable operators},
Israel J. Math. \textbf{97} (1997), 203--219.


\bibitem{Diestel1984}
J. Diestel, \emph{Sequences and series in Banach spaces}, Graduate Texts
  in Mathematics, vol.~92, Springer-Verlag, New York, 1984.

\bibitem{DiestelGraph2017}
R. Diestel, \emph{Graph theory}, 5th ed., Graduate Texts in Mathematics,
  vol.~173, Springer, Berlin, 2017.


\bibitem{DoddsDoddsDePagter1989}
P.~G. Dodds, T.~K. Dodds, and B. de Pagter,
\emph{Noncommutative Banach function spaces},
Math. Z. \textbf{201} (1989), no.~4, 583--597.


\bibitem{DoddsDePagter2014}
P. G. Dodds and B. de Pagter,
\emph{Normed K\"othe spaces: a non-commutative viewpoint},
Indag. Math. (N.S.) \textbf{25} (2014),
no.~2, 206--249.

\bibitem{Dodds2016}
P. Dodds, B. de Pagter, F. Sukochev, \emph{Sets of uniformly absolutely continuous norm in symmetric spaces of measurable operators}, Trans. Amer. Math. Soc, \textbf{368} (2016), no. 6, 4315-4355.





\bibitem{FackKosaki1986}
T. Fack and H. Kosaki, \emph{Generalized \(s\)-numbers of
  \(\tau\)-measurable operators}, Pacific J. Math. \textbf{123} (1986), no.~2,
  269--300.

\bibitem{Haagerup1979}
U. Haagerup, \emph{\(L^p\)-spaces associated with an arbitrary von Neumann
  algebra}, Alg\`ebres d'op\'erateurs et leurs applications en physique
  math\'ematique (Proc. Colloq., Marseille, 1977), Colloq. Internat. CNRS, vol.
  274, CNRS, Paris, 1979, pp.~175--184.

\bibitem{HaagerupJungeXu2010}
U. Haagerup, M. Junge, and Q. Xu, \emph{A reduction method for
  noncommutative \(L_p\)-spaces and applications}, Trans. Amer. Math. Soc.
  \textbf{362} (2010), no.~4, 2125--2165.

\bibitem{HaagerupKraus1994}
U. Haagerup and J. Kraus, \emph{Approximation properties for group
  \(C^*\)-algebras and group von Neumann algebras}, Trans. Amer. Math.
  Soc. \textbf{344} (1994), no.~2, 667--699.

\bibitem{HaagerupRosenthalSukochev2003}
U. Haagerup, H.~P. Rosenthal, and F.~A. Sukochev, \emph{Banach
  embedding properties of non-commutative \(L^p\)-spaces}, Mem. Amer. Math.
  Soc. \textbf{163} (2003), no.~776, vi+68~pp.


\bibitem{HensonEtAl1997}
C. W. Henson, N. J. Kalton, N. T. Peck,
I. Tere\v{s}\v{c}\'ak, and P. Zlato\v{s},
\emph{Some Ramsey type theorems for normed
and quasinormed spaces},
Studia Math. \textbf{124} (1997), no.~1,
81--100.


\bibitem{HuangJungeSukochevZanin2025}
J. Huang, M. Junge, F. Sukochev, and D. Zanin,
\emph{Embeddings of symmetric operator spaces into
\(L_p\)-spaces, \(1\leq p<2\), on finite von Neumann algebras},
Israel J. Math. \textbf{268} (2025), 73--116.


\bibitem{HuangPlievSukochev2022}
J. Huang, M. Pliev, and F. Sukochev, \emph{\(\ell_2\)-strictly singular
  operators on the predual of a hyperfinite von Neumann algebra}, Proc.
  Amer. Math. Soc. \textbf{150} (2022), no.~4, 1623--1633.


\bibitem{HuangPlievSukochevNarrow2022}
J. Huang, M. Pliev, and F. Sukochev,
\emph{On narrow operators from \(L_p\) into
operator ideals},
Mediterr. J. Math. \textbf{19} (2022),
Art.~208.




\bibitem{Kahane1985}
J.-P. Kahane,
\emph{Some random series of functions},
2nd ed., Cambridge Studies in Advanced
Mathematics, vol.~5, Cambridge University
Press, Cambridge, 1985.

\bibitem{Kalton2005}
N.~J. Kalton, \emph{Rademacher series and decoupling}, New York J. Math.
  \textbf{11} (2005), 563--595.


\bibitem{Nelson1974}
E. Nelson, \emph{Notes on non-commutative integration}, J. Funct. Anal.
  \textbf{15} (1974), 103--116.


\bibitem{OdellSchlumprecht1998}
E. Odell and Th. Schlumprecht,
\emph{A problem on spreading models},
J. Funct. Anal. \textbf{153} (1998), no.~1, 249--261.


\bibitem{ParcetRandrianantoanina2006}
J. Parcet and N. Randrianantoanina, \emph{Gundy's decomposition for
  non-commutative martingales and applications}, Proc. Lond. Math. Soc. (3)
  \textbf{93} (2006), no.~1, 227--252.

\bibitem{PisierXu2003}
G. Pisier and Q. Xu, \emph{Non-commutative \(L^p\)-spaces}, in
  \emph{Handbook of the geometry of Banach spaces}, vol.~2, W.~B. Johnson
  and J. Lindenstrauss (eds.), North-Holland, Amsterdam, 2003,
  pp.~1459--1517.

\bibitem{Randrianantoanina2008}
N. Randrianantoanina,
\emph{Embeddings of non-commutative \(L_p\)-spaces into
preduals of finite von Neumann algebras},
Israel J. Math. \textbf{163} (2008), 1--27.

\bibitem{RandrianantoaninaHaagerup2002}
N. Randrianantoanina, \emph{Kadec--Pe{\l}czy\'nski decomposition for
  Haagerup \(L^p\)-spaces}, Math. Proc. Cambridge Philos. Soc. \textbf{132}
  (2002), no.~1, 137--154.




\bibitem{RaynaudXu2003}
Y. Raynaud and Q. Xu, \emph{On subspaces of non-commutative
  \(L_p\)-spaces}, J. Funct. Anal. \textbf{203} (2003), no.~1, 149--196.

\bibitem{Rosenthal1974}
H.~P. Rosenthal, \emph{A characterization of Banach spaces containing
  \(\ell^1\)}, Proc. Natl. Acad. Sci. USA \textbf{71} (1974), no.~6,
  2411--2413.

\bibitem{SukochevZhou2020}
F. Sukochev and D. Zhou, \emph{\(2\)-co-lacunary sequences in
  noncommutative symmetric Banach spaces}, Proc. Amer. Math. Soc.
  \textbf{148} (2020), no.~5, 2045--2058.

\bibitem{Takesaki1979}
M. Takesaki, \emph{Theory of operator algebras. I}, Springer-Verlag,
  New York--Heidelberg, 1979.

\end{thebibliography}
\end{document}